\begin{document}

\title{55- and 56-configurations are reducible
}


\author{Jin Xu         
}


\institute{J. Xu \at
              Key Laboratory of High Confidence Software Technologies (Peking University), Ministry of Education, China \\
              School of Electronics Engineering and Computer Science, Peking University, Beijing 100871, China\\
}

\date{Received: date / Accepted: date}

\maketitle

\begin{abstract}
Let $G$ be a 4-chromatic maximal planar graph (MPG)  with the minimum degree of at least 4 and let $C$ be an even-length cycle of $G$.
If $|f(C)|=2$ for every $f$ in some Kempe equivalence class of $G$, then we call $C$  an unchanged bichromatic cycle (UBC) of $G$, and correspondingly $G$ an unchanged bichromatic cycle maximal planar graph (UBCMPG) with respect to $C$, where $f(C)=\{f(v)| v\in V(C)\}$.
For an  UBCMPG $G$ with respect to an UBC $C$,  the subgraph of $G$ induced by the set of edges belonging to $C$ and its interior (or exterior), denoted by $G^C$,  is called a base-module of $G$; in particular, when the length of $C$ is equal to four, we use $C_4$ instead of $C$ and call $G^{C_4}$ a 4-base-module. In this paper, we first study the properties of UBCMPGs and   show that every 4-base-module $G^{C_4}$ contains a 4-coloring under which $C_4$ is bichromatic and there are at least two bichromatic paths with different colors between one pair of diagonal vertices of $C_4$ (these paths are called module-paths). We further prove that every 4-base-module  $G^{C_4}$  contains a 4-coloring (called decycle coloring)  for which the ends of a module-path are colored by distinct colors. Finally, based on  the technique of the contracting and extending operations of MPGs, we prove that 55-configurations and 56-configurations are reducible by converting the reducibility problem of these two classes of configurations
 into the decycle coloring  problem of 4-base-modules.
\keywords{unchanged bichromatic-cycle maximal planar graphs  \and 4-base-module \and decycle coloring \and 55-configurations and 56-configurations \and reducibility}
\end{abstract}

\section{Introduction}
\label{intro}

Maximal planar graphs (MPGs), as the ones containing the maximum number of edges in the class of planar graphs, have been well studied since the late 1970s due to the observation that MPGs can be served as the target of studying the Four-Color Conjecture (FCC). To prove the FCC, mathematicians as well as  computer scientists have been spending a tremendous amount of efforts to explore the characteristics of MPGs, including  colorings, structures and constructions, etc; see \cite{r1,r2,r3,r4,r5,r6,r7,r8,r9,r10,r11} for the detailed information on these researches. In the course of proving the FCC, many challenges are encountered inevitably, and as a result new conjectures are proposed accordingly, such as Uniquely Four Chromatic planar graphs conjecture, etc. \cite{r3,r4}, which enriches the maximal planar graph theory \cite{r11}.

It is generally known that the Euler's formula is a simple but extremely useful technique  for analyzing the structural characteristics of planar graphs. Based on this formula, a large number of consequences in terms of the structure of planar graphs are obtained, among which the fundamental and classical one is that the minimum degree $\delta(G)$ of every planar graph $G$ is at most 5, especially $3\leq \delta(G) \leq 5$ when $G$ is an MPG. There are many variations of the Euler's formula, which play an essential role in the development of ``discharging'' ---\!--- the key approach exploiting the computer-assisted proof of the FCC  by investigating  the unavoidability and  reducibility of some configurations of MPGs \cite{r5}. In the Kempe's `proof', he claimed that the $k$-wheel configuration ($k=3,4,5$)  is reducible (by introducing the method of  Kempe change) and hence proved the FCC \cite{r1}. However, it was pointed out by Heawood that there exists a hole in Kempe's proof (for the $5$-wheel) \cite{r6}, which garnered great interest to  mathematics in proving the reducibility of this type of configuration. The basic idea to cope with this hole is to (exhaustively) search for unavoidable 5-wheel-based configurations and then attempt to prove that each of these configurations is reducible. Unfortunately, the number of such configurations is so huge that the reducibility can not be examined in detail (manually) for all of these configurations.   Given this, Heesch \cite{r5} in 1969 introduced
the brilliant  method of ``discharging" at an academic conference, which can be used to design appropriate rules to explore the unavoidable sets of configurations  by the aid of computer programs.
This idea later inspired Haken and Appel, who spent seven years investigating configurations in more details and eventually in 1976 (with the help of Koch and about 1200 hour of fast mainframe computer) gave a computer-based proof of FCC  \cite{r7,r8}, where the number of discharging rules and the number of unavoidable configurations they used are 487 and 1936, respectively. Appel and Haken's work opened an avenue for logical reasoning using computers.

Many scholars have ever expressed doubts on the reliability of the computer-assisted
proof of the FCC. Indeed, Haken and Appel had modified their work several times
before the final version  was  published in 1976, in which they examined a total number of 1936 reducible
unavoidable configurations. In 1996, 20 years later, Robertson, et al. \cite{r9}, applying the same approach as that of  Apple and Haken, gave an improved proof of the FCC in which the number of reducible configurations in
the unavoidable set was reduced to 633. Nevertheless, mathematicians still expect a  conventional  simple mathematical proof for the FCC. Unfortunately, such a proof has not come out yet since 1852.

To construct MPGs effectively,  Eberhard \cite{r13} in 1891 first proposed the so-called  \textbf{pure chord-cycle}, which is a cycle $C$ of an MPG such that the interior of $C$  contains no vertices. By introducing three operators based on pure chord-cycle, denoted by $\phi_1,\phi_2,\phi_3$ (see Figure \ref{newfig1-1}), Eberhard  established an MPG generation system  $<K_4: \{\phi_1,\phi_2,\phi_3\}>$ which can  construct all MPGs starting from the initial graph $K_4$ (i.e., the complete graph of order 4).

\begin{figure}[H]
	\centering	
\includegraphics[width=12cm]{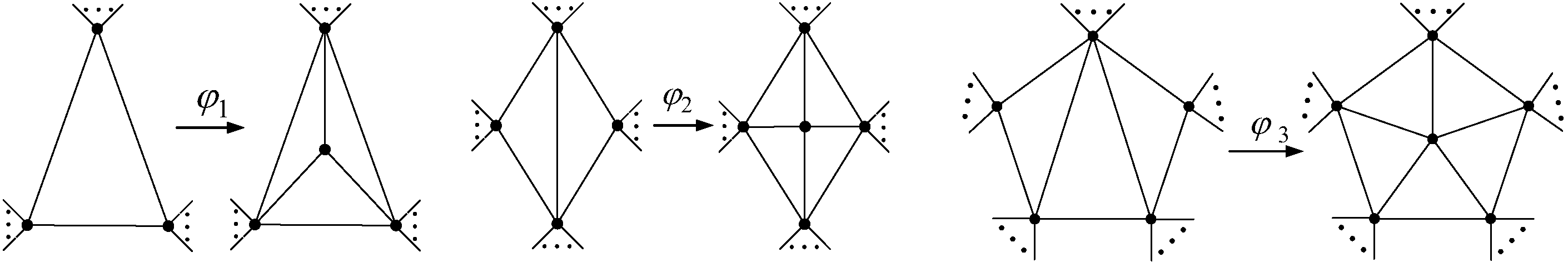}\\
	\caption{The three operators proposed by Eberhard} \label{newfig1-1}
\end{figure}

To realize the transformation between two MPGs with the same order,
Wagner \cite{r14} in 1936  proposed the method of \textbf{diagonal flip} (also called  \textbf{edge-flipping}) , which  is a local deformation of an MPG $G$  replacing  a diagonal edge $ac$ with the other edge $bd$ in a diamond subgraph $abcd$ such that  $bd\notin E(G)$, as shown in Figure \ref{newfig1-2}.

\begin{figure}[H]
	\centering	
\includegraphics[width=4.2cm]{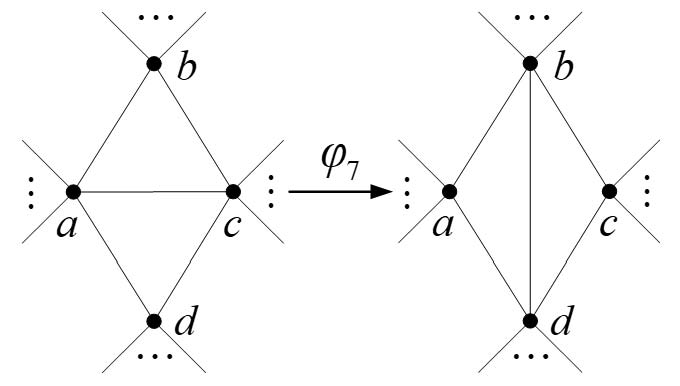}\\
	\caption{Diagonal flip} \label{newfig1-2}
\end{figure}

In 1974, Barnette \cite{r15} and Butler \cite{r16}  independently built a generation system  $<Z_{20}: \{\phi_4,\phi_5,\phi_6\}>$ (see Figure \ref{newfig1-3}) which can generate all 5-connected MPGs, where $Z_{20}$ is the icosahedron.

\begin{figure}[H]
	\centering	
\includegraphics[width=12cm]{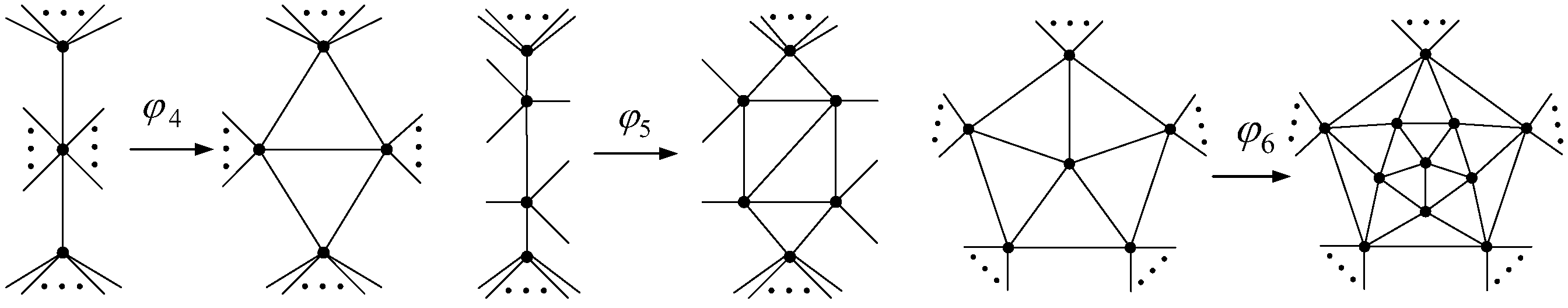}\\
	\caption{The three operators $\phi_4, \phi_5$ and $\phi_6$ proposed by Barnette and Butler} \label{newfig1-3}
\end{figure}

In 1984, Batagelj  \cite{r17} extended the generation system $<Z_{20}; \{\phi_4,\phi_5,\phi_6\}>$ to $<Z_{20}; \{\phi_4,\phi_5,\phi_7\}>$, which can  generate 3- and 4-connected MPGs of minimum degree 5. Analogously, he also depicted a method to construct (even) MPGs of minimum degree 4.

In 2016, a new method for the construction of MPGs, called the \textbf{Contracting
and Extending system (CE-system for short)}, is proposed in \cite{r18}. This system has four pairs of operators and one starting graph $K_4$. Compared with the existing approaches, CE-system can naturally connect the coloring with the construction of MPGs. We will discuss this system in detail in Section \ref{sec:ecsystem}.

As for the colorings of MPGs, a fundamental and essential work is certainly the Kempe change (or K-change for short), proposed by Kempe in 1879. The basic function of K-change is to induce a new 4-coloring from an existing 4-coloring. At present, scholars are interested in determining whether a $k (\geq 3)$-chromatic graph is a Kempe graph, i.e.,  whether a $k$-chromatic graph whose $k$-colorings can be generated from a given $k$-coloring of the graph. For more detailed information upon K-change please see the review \cite{r19}.

There are a great number of non-Kempe graphs in the class of MPGs. As an example for non-Kempe graph one can see the MPG shown in Figure \ref{newfig2-1}; this graph contains in total three 4-colorings, denoted by $f_1,f_2$ and $f_3$, in which  $f_3$ can not be generated from $f_1$ or $f_2$ by K-change. In this paper, we found a special class of non-Kempe MPGs, named as unchanged bichromatic-cycle maximal planar graphs (UBCMPGs for short). Based on UBCMPGs, we introduce  4-base-module $G^C$ and show that any 4-base-module $G^C$ of $G$ contains a decycle coloring. Moreover, with  contracting and extending operations (CE-operations) of MPGs, we are able to convert the FCC  into the decycle coloring problem of 4-base-modules and then present a mathematical proof of FCC.

\section{Preliminaries}
\label{sec:1}

All graphs considered in this paper are finite, simple, and undirected.
For a graph $G$, let $V(G)$ and $E(G)$ be the set of \emph{vertices} and the set of \emph{edges} of $G$, respectively.
If $v\in V(G)$ and $uv\in E(G)$ for some $u\in V(G)$, then $u$ is called a \emph{neighbor} of $v$. We denote by $N_G(v)$ the set of neighbors of $v$, which is called the \emph{neighborhood} of $v$ in $G$, and let $N_G[v]=N_G(v)\cup \{v\}$, which is called the \emph{closed neighborhood} of $v$ in $G$. The \emph{degree} of $v$ in $G$, denoted by $d_G(v)$, is the cardinality of $N_G(v)$, i.e., $d_G(v)=|N_G(u)|$. We use  $\delta(G)$ and $\Delta(G)$ to denote the minimum and maximum degree of $G$, respectively.  When there is no scope for ambiguity, we write  $V(G)$, $E(G)$, $d_G(v)$, $N_G(v)$, $\delta(G)$ and $\Delta(G)$ simply as $V, E, d(v), N(v), \delta$ and $\Delta$, respectively. A vertex of degree $k$ in $G$ is called a $k$-vertex of $G$. A graph $H$ is called a \emph{subgraph} of $G$ if $V(H) \subseteq V(G), E(H)\subseteq E(G)$; moreover, if for every pair of vertices $x$ and $y$ of $H$, $xy\in E(H)$ if and only if $xy\in E(G)$, then $H$ is called a \emph{subgraph induced by $V(H)$}, denoted by $G[V(H)]$. By starting with the union of two vertex-disjoint graphs $G$ and $H$, and adding edges joining every vertex of $G$ to every vertex of $H$, one obtains the \emph{join} of $G$ and $H$, denoted by $G\vee H$.  Let $K_n$ and $C_n$ denote the complete graph and cycle of order $n$, respectively.
The join $C_{n}\vee K_{1}$ is called a \emph{n-wheel} with $n$ spokes, denoted by $W_{n}$, where $C_n$ and $K_1$ are called the \emph{cycle} and \emph{center} of  $W_{n}$, respectively. Let $V(K_1)=\{x\}$ and  $C_n=x_1x_2\ldots x_nx_1$. Then, $W_n$ can be represented as  $x$-$x_1x_2\ldots x_nx_1$.  The length of a path or a cycle is the number of edges that the path or the cycle contains.  We call a path (or cycle) an $\ell$-path (or $\ell$-cycle) if its length is equal to $\ell$.

A \emph{planar graph} is a graph which can be drawn in the plane in such a way that edges meet only at their common ends, and such a drawing is called a \emph{plane graph} or \emph{planar embedding} of the graph. For any  planar graph  considered in this paper, we always refer to one of its planar embedding.
A \emph{maximal planar graph} (MPG) is a planar graph to which no new edges can be added without violating the planarity.
 A \emph{triangulation} is a planar graph in which every face is bounded by three edges (including the infinite face).
 It can be easily proved that an MPG is equivalent to a triangulation.

Let $G$ be a planar graph whose outer face has boundary $C$ containing at least four edges. If  all of its faces are triangles except the outer face, then $G$ is called a \emph{semi-maximal planar graph}  \emph{with respect to $C$} (or an \emph{SMPG} for short),  denoted by $G^C$,  where $C$ is called the \emph{outer-cycle} of $G^C$. An SMPG is also called a \emph{configuration}.

Let $G$ be an arbitrary minimum counterexample to FCC in terms of $V(G)$, i.e., $G$ is an MPG, $G$ is not 4-colorable, and every $n$-order MPG such that $n<|V(G)|$ is 4-colorable. If a configuration $G^C$ is not contained in $G$, then $G^C$ is said to be \emph{reducible}. This paper aims to prove that the two configurations, i.e., 55- and 56-configurations, shown in Figure \ref{configurations} are reducible.

\begin{figure}[H]
	\centering	
\includegraphics[width=6cm]{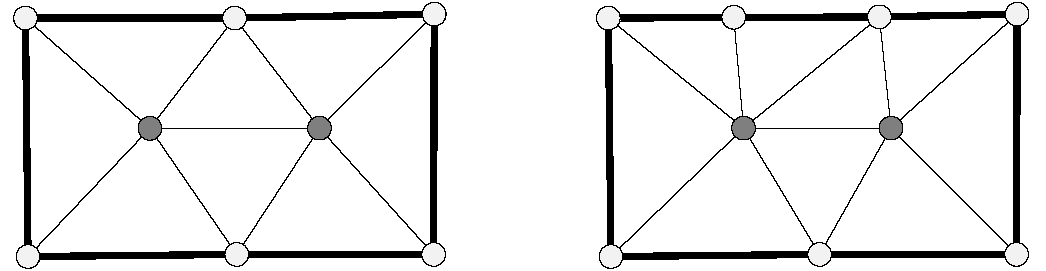}\\
	\caption{55- and  56-configurations} \label{configurations}
\end{figure}

A \emph{proper $k$-vertex-coloring}, or simply a $k$-coloring, of a
 graph \emph{G} is a mapping $f$ from  $V(G)$ to the color sets $C(k)=\{1,2,\ldots,k\}$ such
 that $f(x)\neq f(y)$ if $xy\in E(G)$.  A graph $G$ is \emph{$k$-colorable} if it has a $k$-coloring. The minimum $k$ for which a graph $G$ is $k$-colorable is called the \emph{chromatic number} of $G$, denoted by $\chi(G)$. If $\chi(G)=k$, then  $G$ is called a \emph{$k$-chromatic graph}.
Alternatively,
 each $k$-coloring $f$ of  $G$ can be viewed as a partition  $\{V_{1},V_{2},\cdots,V_{k}\}$ of $V$,
 where $V_{i}$, called the \emph{color classes} of $f$, is an independent set (every two vertices in the set are not adjacent). 
 Clearly, such partition is unique. So it can be written as $f=(V_{1},V_{2},\cdots,
 V_{k})$.
The set of all $k$-colorings of a graph
 $G$ is denoted
 by $C_{k}(G)$. We observe that any two $k$-colorings $f=(V_{1},V_{2},\cdots,
 V_{k})$ and $f'=(V'_{1},V'_{2},\cdots,
 V'_{k})$ can be viewed as the same one if there exists a set $\{j_1,j_2,\ldots, j_k\}=\{1,2,\ldots, k\}$ such that $V_i=V'_{j_i}$ for $i=1,2,\ldots, k$. We then say that $f$ and $f'$ are equivalent. All $k$-colorings equivalent to $f$ form the $f$-equivalence class. It is easy to see that $f$-equivalence class contains $k!$ colorings.
 For a $k$-chromatic graph $G$, we use $C^{0}_{k}(G)$ to
    denote the set of $k$-colorings of $G$ such that any two $k$-colorings are not in the same equivalence class and the $i$th color class of each  $k$-coloring is colored with $i$.

{\bf Remark 1} Note that we only concern 4-chromatic MPGs and SMPGs. If no specified note, we denote by $\{1,2,3,4\}$ the color set.

For a $k$-chromatic graph $G$ such that $k\geq 3$ and $g\in C_k^0(G)$, we use $g(H)$ to denote the set of colors assigned to $V(H)$ under $g$, where $H$ is a subgraph of $G$. If $|g(H)|=\ell (\ell\leq k)$, then $H$ is called an \emph{$\ell$-chromatic subgraph of $G$ under $g$}. We refer to the coloring of $g$ restricted to $V(H)$, denoted by $h$, as the \emph{restricted coloring of $g$ to $H$}, and call $g$  an \emph{extended coloring of $h$ to $G$}. If a bichromatic subgraph of $G$ under $g$ is a cycle (path), then we call the bichromatic subgraph  a \emph{bichromatic cycle} (\emph{bichromatic path}) of $g$.

Let $f$ be a 4-coloring of a 4-chromatic MPG (or  SMPG) $G$.  If $f$  does not contain any bichromatic cycle, then  $f$ is called a \emph{tree-coloring} of $G$ and $G$ is \emph{tree-colorable}; otherwise, $f$ is a \emph{cycle-coloring} and $G$ is \emph{cycle-colorable} \cite{r12,r20,r21}.

Given a 4-chromatic planar graph $G$ and a 4-coloring $f$ of $G$, we use $G_{ij}^f$ to denote the subgraphs induced by vertices colored with $i$ and $j$ under $f$, and use $\omega(G_{ij}^f)$  to denote the number of components of $G_{ij}^f$, $i\neq j$. If $f$ is fixed, we can omit the mark $f$ in $G_{ij}^f$. The components of $G_{ij}$ are called  $ij$-components of $f$; particularly, we refer to an $ij$-component as an $ij$-path of $f$ and $ij$-cycle of $f$ if it is a path and a cycle, respectively. When $\omega(G_{ij}^f)\geq 2$, the \emph{K-change} on an $ij$-component of $f$ is to interchange colors $i$ and $j$ of vertices in the $ij$-component.

 Let $G$ be a cycle-colorable MPG (or  SMPG) and $f$  a cycle-coloring of $G$. Suppose that $C$ is a bichromatic cycle of $f$ and $f(C)$=$\{i,j\}$, $i\neq j$, $i,j\in \{1,2,3,4\}$. The \emph{$\sigma$-operation of $f$ with respect to $C$}, denoted as $\sigma(f,C)$ is to interchange the colors $s$ and $t$ ($\{s,t\}=\{1,2,3,4\}\setminus \{i,j\}$) of vertices in the interior (or exterior) of $C$.

 {\bf Remark 2} Indeed, $\sigma$-operation can be viewed as one or several $K$-changes. Note that the two resulting colorings of $\sigma(f,C)$  by interchanging colors of vertices in the interior and exterior of $C$, respectively, belong to the same coloring-equivalence class. So, we always assume that colors of vertices in the interior of $C$ are interchanged when we carry out $\sigma(f,C)$.

 Clearly, $\sigma(f,C)$  transform  $f$ into a new cycle-coloring of $G$, denoted by $f^c$, that is, $\sigma(f,C)$=$f^c$. We refer to $f^c$ and $f$ as a pair of \emph{complement colorings (with respect to $C$)}. If a 4-coloring $f_0$ can be obtained from $f$ by a sequence of $\sigma$-operations, then  we say that $f$ and $f_0$ are \emph{Kempe-equivalent}. For an arbitrary 4-coloring $f\in C_4^0(G)$, we refer to
 \begin{center}
 $F^{f}(G)=\{f_0; f_0$ and $f$ are Kempe-equivalent, $f_0\in C_4^0(G)$\}
\end{center}
as \emph{the Kempe-equivalence class of $f$}.

A \emph{pseduo $k$-coloring} $f$ of a simple graph $G$ is a mapping from  $V(G)$ to $\{1,2,\ldots,k\}$ ($k\geq 3$) such that there is at least one edge (called \emph{pseudo edge}) whose endpoints receive the same color.
Observe that every edge in an MPG is incident to exactly two \emph{facial triangles} (which is a triangle whose interior is a face of the MPG.) Let
$G$ be an MPG with $\delta\geq 4$ and  $e=uv$ be a pseudo edge under a pseudo $4$-coloring $f$ of $G$. Denote by $\Delta uvu_1$ and $\Delta uvu_2$ the two facial triangles incident to $e$, where $f(u_1)=t, f(u_2)=s$, and $f(u)=f(v)=i$, $i,s,t\in \{1,2,3,4\}$ (see  Figure \ref{fig0-5} (a)). If $i\notin \{s,t\}$, then we call $e$ an \emph{pseudo $ii$-edge}, or more specifically, a \emph{$ts$-type pseudo $ii$-edge}. The pseudo edge $e$, shown in  Figure \ref{fig0-5} (b) is a $22$-type pseudo $44$-edge.

\begin{figure}[H]
	\centering	
\includegraphics[width=4cm]{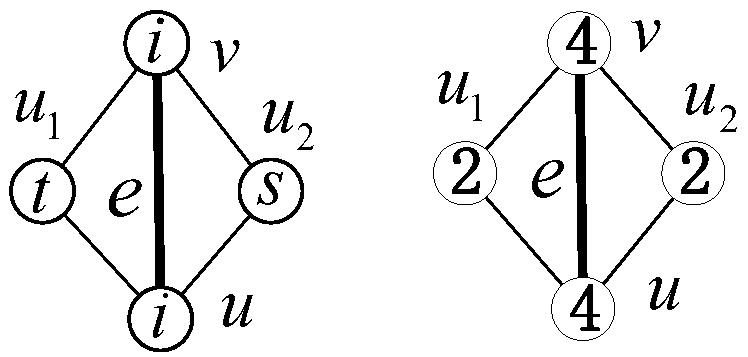}\\
(a) \hspace{1.7cm}  (b)    \\
	\caption{The structure and type of pseudo-edge in maximal planar graph} \label{fig0-5}
\end{figure}

 For terms and notations not defined here, readers can refer to \cite{r22,r10,r11,r12}.

\section{Unchanged Bichromatic-Cycle Maximal Planar Graphs}

This section introduces the unchanged bichromatic-cycle MPGs, where the definitions,  properties, and the classifications will be elaborated accordingly.

\subsection{Definitions and properties}

Let $G$ be a 4-chromatic MPG  with $\delta(G)\geq 4$, $f\in C_4^0(G)$, and $C\in C^2(f)$, where $C^2(f)$ the set of bichromatic cycles of $f$. If $|f'(C)|$=2 holds for any $f'\in F^f(G)$, then we call $C$  an \emph{unchanged bichromatic-cycle} (abbreviated by UB-cycle) of $f$, and $f$ an \emph{unchanged bichromatic-cycle coloring with respect to $C$} (abbreviated by UBC-coloring with respect to $C$) of $G$. Also, $G$ is called an \emph{unchanged bichromatic-cycle MPG  with respect to $C$} (abbreviated by \emph{UBCMPG} (or UBCSMPG) with respect to $C$). The set of bichromatic cycles of all colorings belonging to $F^f$ is called the \emph{Kempe cycle-set under $f$}, denoted by $C^2(F^f)$, i.e.

\begin{equation}
C^2(F^f)=\bigcup \limits _{f'\in F^f(G)}C^2(f')
\end{equation}

Let $C_1$ and $C_2$ be two cycles of $G$.  We say that $C_1$ and $C_2$ are \emph{intersecting} if $V(C_1)$ and $V(C_2)$ contain vertices that are in the interior of $C_2$ and $C_1$, respectively; otherwise, they are \emph{nonintersecting}.

{\bf Remark 3} Note that a UBCMPG may have more than one UB-cycles.

The graph shown in Figure \ref{newfig2-1} (a) is the minimum UBCMPG.  In Figure \ref{newfig2-1} $(a)\sim (c)$, we exhibit all the 4-colorings $f_1,f_2,f_3$ of the graph,  where the cycle labeled with bold lines in  Figure \ref{newfig2-1} $(a)$ and $(b)$ are UB-cycles. Clearly, $F^{f_1}$=$\{f_1,f_2\}$ and $F^{f_3}$=$\{f_3\}$.

\begin{figure}[H]
  \centering
  \includegraphics[width=10cm]{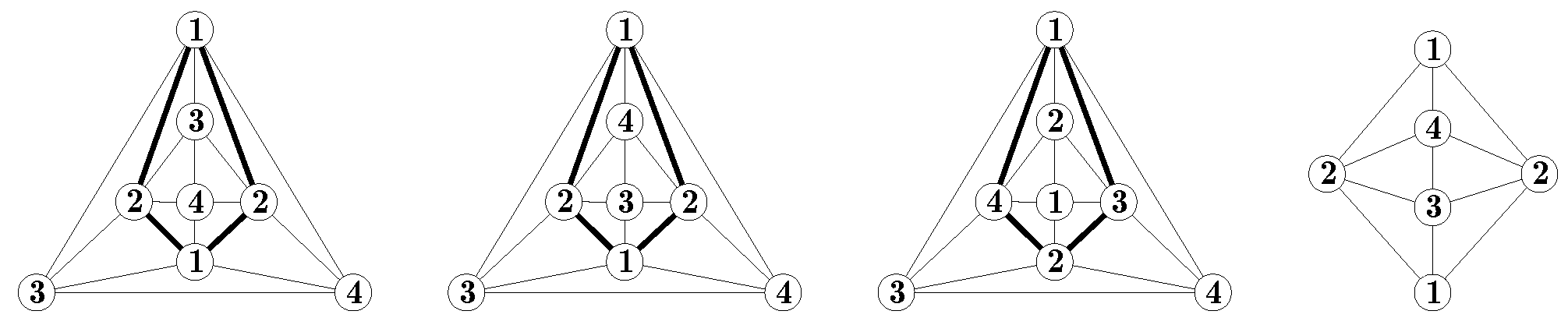}\\
  \hspace{8mm}(a) $f_1$ \hspace{1.8cm}  (b) $f_2$ \hspace{1.8cm}  (c) $f_3$ \hspace{1.4cm} (d) $B^4$
  \caption{The UBCMPG with the minimum order}\label{newfig2-1}
\end{figure}

\begin{theorem}\label{thm2.1}
Let $G$ be a 4-chromatic MPG with $\delta(G)\geq 4$, $f\in C_4^0(G)$, and $C\in C^2(f)$. Then, $C$ is a UB-cycle of $f$ if and only if for any $C'\in C^2(F^f)$ with $f(C')\neq f(C)$, $C$ and $C'$ are nonintersecting.
\end{theorem}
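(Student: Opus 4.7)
The proof hinges on a central observation about the effect of a single Kempe $\sigma$-operation. If $C$ and $C^*$ are bichromatic cycles under the same coloring $g \in F^f$ with palettes $\{i,j\}$ and $\{s,t\}$ respectively, then $\sigma(g,C^*)$ preserves the property $|g(C)|=2$ except when $|\{i,j\}\cap\{s,t\}|=1$, and in this exceptional case $\sigma(g,C^*)$ destroys the bichromaticity of $C$ if and only if $C$ has vertices of the unshared color $j \in \{i,j\}\setminus\{s,t\}$ both strictly inside and strictly outside $C^*$. The other two cases are routine: if $|\cap|=0$ then the swap permutes $\{i,j\}$ inside $C^*$ (so $C$ stays bichromatic with palette $\{i,j\}$), while if $|\cap|=2$ the swap touches only colors absent from $V(C)$.

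For the sufficiency direction ($\Leftarrow$), I argue by contrapositive. Suppose $C$ is not UB. Take a Kempe sequence $f=f_0,\ldots,f_n$ of minimum length with $|f_n(C)|\neq 2$, and let $C^*$ be the bichromatic cycle of $f_{n-1}$ on which the final $\sigma$ is performed. By the central observation, $|f_{n-1}(C)\cap f_{n-1}(C^*)|=1$ and $C$ has vertices colored $j \in f_{n-1}(C)\setminus f_{n-1}(C^*)$ both strictly inside and strictly outside $C^*$. Since an edge cannot cross $C^*$ except at a vertex, $C$ shares at least two vertices with $C^*$, and all such shared vertices carry the unique color in $f_{n-1}(C)\cap f_{n-1}(C^*)$. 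A Jordan-curve analysis along $C^*$ then shows that some arc of $C^*$ between consecutive shared vertices lies strictly inside $C$; this arc cannot be a single edge, because its two endpoints (same-colored shared vertices) cannot be adjacent in the proper coloring $f_{n-1}$. Hence $C^*$ has a vertex strictly inside $C$, so $C$ and $C^*$ are intersecting. A final check gives $f(C^*)\neq f(C)$: either $|f(C^*)|\neq 2$ automatically, or if $C^*$ were bichromatic under $f$ with the same palette as $C$, the initial segment of the sequence could be rerouted to avoid $C$ entirely, contradicting minimality of $n$.

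For the necessity direction ($\Rightarrow$), again by contrapositive, suppose $C' \in C^2(F^f)$ with $f(C')\neq f(C)$ and $C,C'$ intersecting. Pick $f'\in F^f$ under which $C'$ is bichromatic; by UB, $C$ is also bichromatic under $f'$. Planarity plus the intersecting property force $C$ and $C'$ to share vertices, so $f'(C)\cap f'(C')\neq \emptyset$. The case $|f'(C)\cap f'(C')|=2$ (coincident palettes) is eliminated by refining the choice of $f'$: tracking palette-changes along a Kempe path from $f$ to $f'$ and exploiting $f(C')\neq f(C)$, one selects a working coloring in $F^f$ under which $C'$ is bichromatic but $f'(C)\neq f'(C')$. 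Then $|f'(C)\cap f'(C')|=1$, and the intersecting structure produces vertices of the unshared color on $C$ both strictly inside and strictly outside $C'$; the central observation then yields $\sigma(f',C')\in F^f$ with $|\sigma(f',C')(C)|\geq 3$, contradicting the UB property of $C$.

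The principal obstacle is the planarity-and-coloring argument that the inside-$C$ arc of $C^*$ cannot be a single edge: this relies critically on the fact that shared vertices all carry the single color in the palette-intersection, so the single-edge alternative would place two adjacent same-colored vertices. A secondary but equally delicate point is the palette-tracking in the forward direction, reconciling the reference-coloring hypothesis $f(C')\neq f(C)$ with the working coloring $f'$ under which the $\sigma$-operation is actually performed, since Kempe operations along the path from $f$ to $f'$ can reshuffle the palettes on both $C$ and $C'$.
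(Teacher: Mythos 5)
Your argument follows the same basic strategy as the paper's: both directions rest on the fact that a $\sigma$-operation on an intersecting bichromatic cycle destroys the bichromaticity of $C$, and both obtain the conclusion by contradiction/contrapositive along a Kempe sequence. What you do that the paper does not is make that fact explicit and precise — your ``central observation'' (the case split on $|\{i,j\}\cap\{s,t\}|$, with the exact criterion that $C$ must carry the unshared color both strictly inside and strictly outside $C^*$) is exactly the lemma that the paper's one-line assertion ``since $C$ and $C'$ are intersecting, $C\notin C^2(f'')$'' is secretly using, and your Jordan-curve argument showing that $C^*$ must also have a vertex strictly inside $C$ is a genuine and correct addition (the paper never verifies both halves of its own ``intersecting'' definition). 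In that sense your write-up is more rigorous than the published proof.

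However, two steps that you yourself flag as ``delicate'' are, as written, genuine gaps rather than routine verifications. In the necessity direction, you dismiss the coincident-palette case $|f'(C)\cap f'(C')|=2$ by ``refining the choice of $f'$'' via ``tracking palette-changes''; you never exhibit such an $f'$ or explain why one must exist, and it is not obvious that the hypothesis $f(C')\neq f(C)$ (which concerns the original coloring $f$, not the working coloring $f'$) forces $f'(C)\neq f'(C')$ — the palettes of both cycles can drift along the Kempe path. In the sufficiency direction, the claim that when $f(C^*)=f(C)$ ``the initial segment of the sequence could be rerouted to avoid $C$ entirely, contradicting minimality of $n$'' is not an argument at all: you do not say what rerouting is performed or why it preserves the endpoint $f_n$ with $|f_n(C)|\neq 2$. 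Both of these are precisely the points where the theorem's hypothesis — which is phrased in terms of $f$ while the $\sigma$-operation is applied under $f_{n-1}$ or $f'$ — has to be reconciled with the working coloring, and the reconciliation is not done. (To be fair, the paper's own proof silently elides exactly the same issue; but identifying a gap is not the same as closing it, and as a proof of the theorem your argument does not yet go through in these two spots.)
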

\begin{proof}
(Necessity). Suppose, to the contrary, that there is a cycle $C'\in C^2(F^f)$ with $f(C')\neq f(C)$ such that  $C$ and $C'$ are intersecting. Then, $C'$ is a bichromatic cycle of some coloring in  $F^f(G)$, say $f'$, i.e., $C'\in C^2(f')$. Since $C$ is a UB-cycle of $f$, it follows that $C\in C^2(f')$. Let $f''=\sigma(f',C')$. Since  $C$ and $C'$ are intersecting, we have that $C\notin C^2(f'')$,  a contradiction.

(Sufficiency). Observe that every $f'\in F^f(G)$ is obtained by implementing $\sigma$-operation of a 4-coloring $g\in F^f(G)$ with respect to a bichromatic cycle of $g$. We have that $C\in C^2(f')$ for every $f'\in F^f(G)$, since $C$ and $C'$ are nonintersecting for every $C'\in C^2(F^f)$. \qed
\end{proof}

Theorem \ref{thm2.1} is called the \emph{basic theorem} of UB-cycle, based on which we can get more methods to identify a UB-cycle. For example, the following result holds directly from Theorem \ref{thm2.1}.

\begin{corollary}\label{coro2.2}
Let $G$ be a UBCMPG with respect to $C$. Then, every vertex on $C$ has degree at least 5 in $G$.
\end{corollary}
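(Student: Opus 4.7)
The plan is a proof by contradiction via the basic theorem (Theorem 2.1). Suppose some vertex $v\in V(C)$ has $d_G(v)=4$. Since $G$ is an MPG, the four neighbors of $v$ form the link cycle $D=v_1v_2v_3v_4$, and $v$ is the only interior vertex of $D$ (the link bounds a disc containing just $v$). Normalizing colors so that $f(v)=1$ and $f(C)=\{1,2\}$, the two neighbors of $v$ that lie on $C$ must both be colored $2$; being nonadjacent in the 4-cycle $D$, they are opposite, so after relabeling they are $v_1$ and $v_3$. Consequently $f(v_2),f(v_4)\in\{3,4\}$.

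The key planar-embedding observation is that the two edges $vv_1,vv_3$ of $C$ split the cyclic order of the four triangular faces at $v$ into the pair $\{vv_1v_2,vv_2v_3\}$ and the pair $\{vv_3v_4,vv_4v_1\}$. Hence $v_2$ and $v_4$ lie on opposite sides of the Jordan curve $C$: exactly one of them lies in the interior of $C$ and the other in the exterior.

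To force the contradiction I would produce a bichromatic cycle in $C^2(F^f)$ with color set different from $f(C)$ that intersects $C$. If $f(v_2)=f(v_4)=c$, let $f'=f$; otherwise $\{f(v_2),f(v_4)\}=\{3,4\}$, and I apply $\sigma(f,C)$, which interchanges colors $3$ and $4$ only on the interior side of $C$, flipping the color of whichever of $v_2,v_4$ is interior and leaving the exterior one fixed. Either way $f'\in F^f(G)$ and $f'(D)=\{2,c\}$ for some $c\in\{3,4\}$, so $D\in C^2(F^f)$ with $f(D)\neq f(C)$ (since $1\notin f(D)$). Moreover $C$ and $D$ intersect: $v\in V(C)$ lies in the interior of $D$, and the interior-side vertex among $v_2,v_4$ lies in the interior of $C$. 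Theorem 2.1 then contradicts $C$ being a UB-cycle, so no such $v$ exists and every vertex of $C$ has degree at least $5$.

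The main obstacle is the planar-topological step: asserting cleanly that $vv_1$ and $vv_3$ separate $v_2$ from $v_4$ across $C$, and that $v$ lies in the interior of $D$. Both facts rely only on the rotation of faces around a degree-$4$ vertex in a triangulation and on viewing $C$ as a Jordan curve, but they are the only non-coloring ingredients. The rest of the argument reduces to noting that one $\sigma$-move, if needed, equalizes the two "free" colors so that the link of $v$ becomes the bichromatic cycle needed to invoke Theorem 2.1.
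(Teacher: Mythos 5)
Your proof is correct and follows exactly the route the paper indicates: the paper gives no explicit argument but states that the corollary ``holds directly from Theorem \ref{thm2.1},'' and your write-up is a careful realization of that, producing the link cycle $D$ of a putative degree-$4$ vertex on $C$ as a bichromatic cycle of some $f'\in F^f(G)$ with $f'(D)\neq f(C)$ that intersects $C$. The topological facts you flag (the two $C$-edges at $v$ separating $v_2$ from $v_4$, and $v$ lying strictly inside the disc bounded by its link) are indeed the only non-coloring inputs, and both are immediate from the cyclic order of the four triangular faces at a degree-$4$ vertex in a triangulation; the one $\sigma(f,C)$ move to equalize $f(v_2)$ and $f(v_4)$ when they differ is exactly the right adjustment.
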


\subsection{Types}

 \begin{figure}[H]
  \centering
  \includegraphics[width=12.5cm]{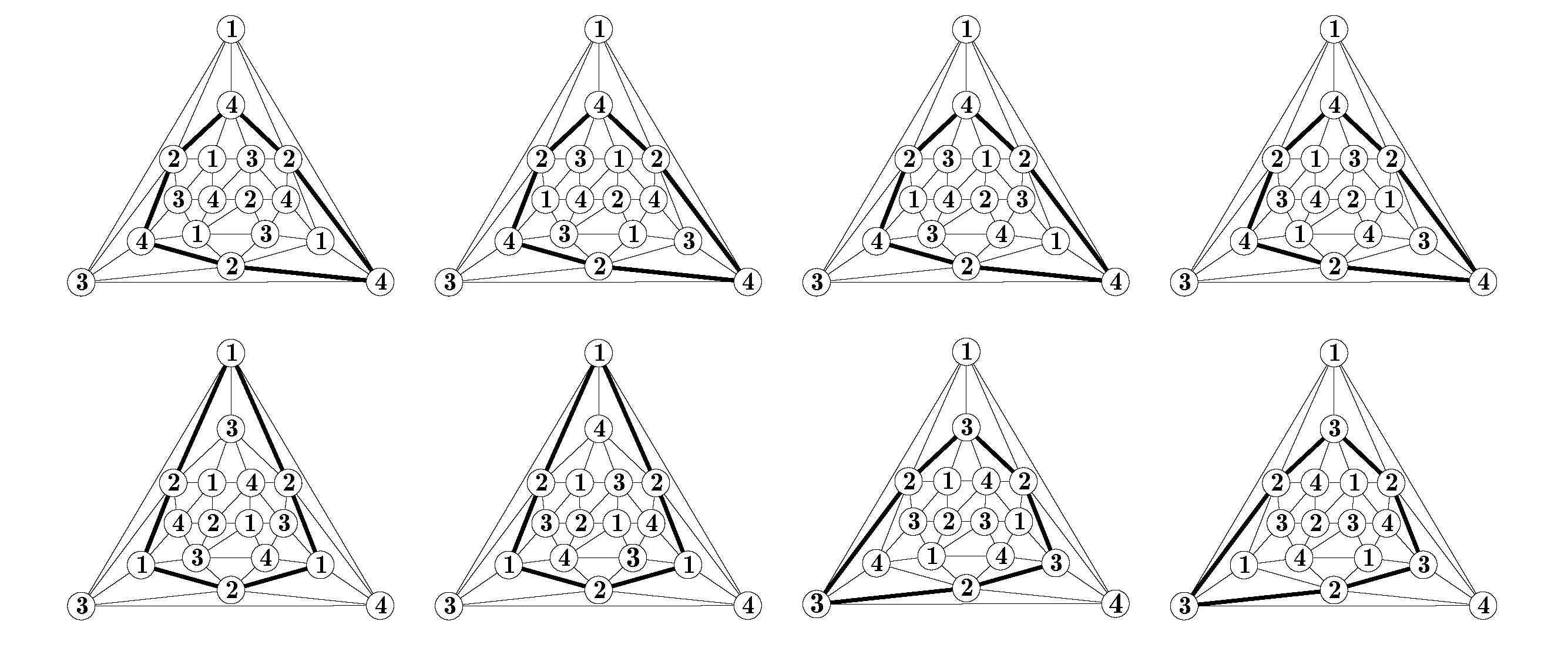}\\
  \includegraphics[width=12.5cm]{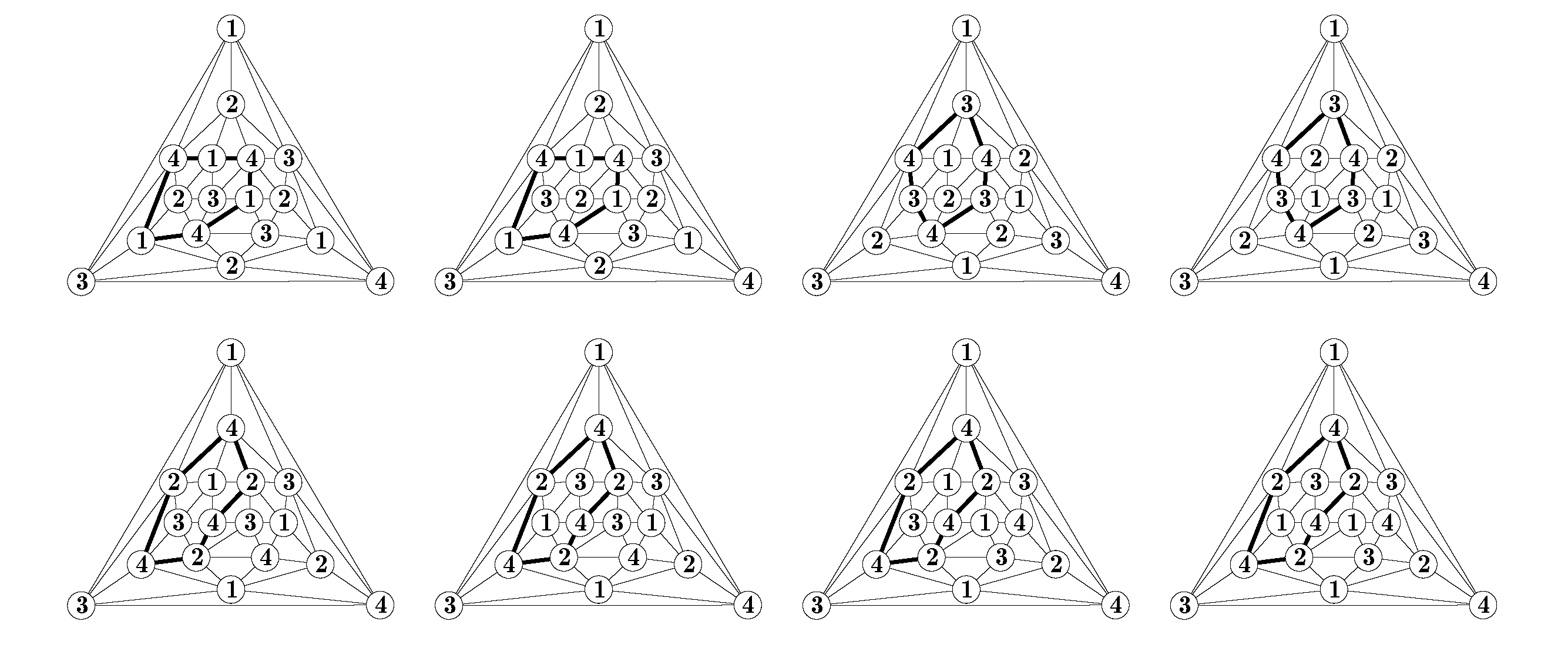}\\
  \caption{A pure-type UBCMPG and all its 4-colorings}\label{fig4}
\end{figure}

Let $G$ be a 4-chromatic maximal planar graph with $\delta(G)\geq 4$, and $f\in C_4^0(G)$ be a cycle-coloring. If $C^2(f)$ contains no UB-cycle, then we call $f$ a \emph{cyclic cycle-coloring}. Let $C_{4U}^0(G)$ denote the set of UBC-colorings of $G$,  $C_{4T}^0(G)$ denote the set of tree-colorings of $G$, and  $C_{4C}^0(G)$ denote the set of cyclic cycle-colorings of $G$. It is evident that $C_{4U}^0(G)\cap C_{4T}^0(G)=\emptyset, C_{4U}^0(G)\cap C_{4C}^0(G)=\emptyset$, $C_{4T}^0(G) \cap C_{4C}^0(G)=\emptyset$, and
\begin{equation}\label{equadd-1}
C_4^0(G)=C_{4U}^0(G)\cup C_{4T}^0(G)\cup C_{4C}^0(G)
\end{equation}

Based on equation (\ref{equadd-1}), we divide UBCMPGs $G$ into the following types.

\begin{itemize}
 \item {Pure-type}: $C_4^0(G)=C_{4U}^0(G)$;
\vspace{0.1cm}

 \item{Tree-type}: $C_4^0(G)=C_{4U}^0(G)\cup C_{4T}^0(G)$;
\vspace{0.1cm}

 \item{Cycle-type}: $C_4^0(G)=C_{4U}^0(G)\cup C_{4C}^0(G)$;
\vspace{0.1cm}

 \item{Hybrid-type}: $C_4^0(G)=C_{4U}^0(G)\cup C_{4T}^0(G)\cup C_{4C}^0(G)$.
\end{itemize}

 Note that there are graphs belonging to each of the above four types.
The 17-order graph $G$ shown in Figure \ref{fig4} is a pure-type UBCMPG, where $C_4^0(G)$ contains eight pairs of complement UBC-colorings. As for pure-type UBCMPGs, we propose the following conjecture.

\begin{conjecture}\label{conjecture1} 
The graph shown in Figure \ref{fig4} is the uniquely pure-type UBCMPG.
\end{conjecture}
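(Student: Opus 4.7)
The plan is to prove Conjecture \ref{conjecture1} by a two-stage argument: first, a lower bound $|V(G)|\geq 17$ for every pure-type UBCMPG $G$; second, a uniqueness classification at order $17$. For the lower bound, I would start from the defining condition $C_4^0(G)=C_{4U}^0(G)$, which forces \emph{every} $4$-coloring of $G$ to contain at least one UB-cycle. By Corollary \ref{coro2.2}, every vertex lying on any UB-cycle has degree at least $5$, so the vertices covered by the union of UB-cycles across all Kempe-equivalence classes must form a substantial set of $\geq 5$-vertices; combined with $\delta(G)\geq 4$ and Euler's formula, this should yield a nontrivial lower bound on $|V(G)|$ and on the number of Kempe-equivalence classes.

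Next, I would exploit Theorem \ref{thm2.1}. For each representative $f\in C_4^0(G)$ with UB-cycle $C_f$, every other bichromatic cycle in $C^2(F^f)$ carrying a different color pair must be nonintersecting with $C_f$. Iterating this across equivalence classes produces a nested/laminar family of bichromatic cycles of varying color pairs. Using the fact that $G$ has no tree-coloring and no cyclic cycle-coloring, I would argue that this laminar family must be rich enough to witness UB-cycles of all three color pairs $\{1,2\},\{1,3\},\{1,4\}$ (up to permutation) simultaneously, and that the shortest possible such laminar configuration is realized by a single $4$-cycle surrounded by two concentric ``shells'' of facial triangles, matching the structure in Figure \ref{fig4}. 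For uniqueness at order $17$, I would enumerate the candidates by case analysis on the length of the innermost UB-cycle (necessarily even, hence $4$ or $6$) and on the degree sequence of the shells, and rule out all alternatives by exhibiting either a tree-coloring (contradicting pure-type) or a $\sigma$-operation producing a coloring outside $C_{4U}^0$.

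Finally, for $|V(G)|>17$, I would invoke the contracting/extending operations (CE-system) discussed in Section \ref{sec:ecsystem}: show that any legal extension of the $17$-vertex example introduces either a vertex of degree $<5$ on a previously UB-cycle (violating Corollary \ref{coro2.2} after re-evaluation of UB-status) or a new bichromatic cycle that the $\sigma$-operation can break, producing a tree-coloring or cyclic cycle-coloring. The hard part will be the second stage, the uniqueness at order $17$ and the exclusion of larger orders: it requires tracking the entire Kempe-equivalence orbit $F^f(G)$ for each candidate, which is combinatorially heavy and is likely why this statement is posed as a conjecture rather than proved. A promising way to tame the explosion is to show first that any pure-type UBCMPG has $|C_4^0(G)|$ bounded (e.g.\ exactly $8$ pairs of complement colorings, as in the example), reducing the problem to classifying planar triangulations with a very small prescribed coloring set.
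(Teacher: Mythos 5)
The statement you are trying to prove is labelled \emph{Conjecture}~\ref{conjecture1} in the paper, and the paper gives no proof of it; there is nothing to compare your attempt against except the absence of an argument. You yourself acknowledge this near the end of your proposal, which is the right instinct. What you have written is a program for a proof, not a proof, and it would not close the conjecture in its present form.

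The concrete gaps are the following. First, the Euler-formula lower bound is not actually derived: Corollary~\ref{coro2.2} gives $\deg\geq 5$ only on vertices that lie on some UB-cycle, and since you do not control how many vertices lie on UB-cycles, nor how many Kempe-equivalence classes there are, there is no counting argument written down that forces $|V(G)|\geq 17$ rather than, say, $12$ or $14$; the bound is asserted, not proved. Second, the ``laminar family'' step is an unsupported leap. Theorem~\ref{thm2.1} says that a UB-cycle $C$ of $f$ is nonintersecting with every $C'\in C^2(F^f)$ of a different colour pair, but this is a statement \emph{within one} Kempe-equivalence class; there is no reason UB-cycles coming from \emph{different} equivalence classes are pairwise nonintersecting, nor any reason the UB-cycles realised across $C_4^0(G)$ must cover all three colour pairs, and the reduction of the structure to ``a single $4$-cycle surrounded by two concentric shells'' is exactly the uniqueness claim you are trying to prove, restated rather than derived. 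Third, the extension argument for $|V(G)|>17$ implicitly assumes that every pure-type UBCMPG can be reached from the $17$-vertex example by the CE-operations while preserving pure-type status at each step; the paper's own Remark~7 (Figure~\ref{fig3-7}) shows that an E4WO can destroy base-module properties in an uncontrolled way, so you cannot restrict attention to extensions of the known example without first proving a structure theorem that every pure-type UBCMPG contains it as a ``core''. Until these three steps are supplied with actual arguments, the problem remains open, as the paper indicates by calling it a conjecture.
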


The graph  shown in Figure \ref{newfig2-1} is a tree-type UBCMPG of order 8, which contains one pairs of complement UBC-colorings and one tree-coloring. The graph  shown in Figure \ref{fig5} is a tree-type UBCMPG of order 12, which contains two pairs of complement UBC-colorings and two tree-colorings.

\begin{figure}[H]
  \centering
  \includegraphics[width=12.3cm]{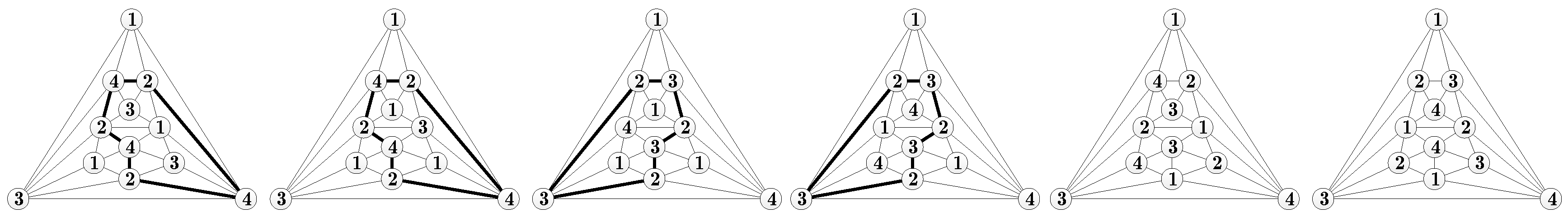}\\
  \caption{A tree-type UBCMPG of order 12 and all of its 4-colorings}\label{fig5}
\end{figure}

{\bf Remark 4} Note that for any $n\geq 2$, there exists a  tree-type UBCMPG $G$ of order $4n$. And $|C_4^0(G)|=2^{n-1}+2^{n-2}$, where the numbers of tree-colorings and UBC-colorings are $2^{n-2}$ and $2^{n-1}$, respectively \cite{r23}.

\subsection{$\sigma$-reconfiguration graph of MPGs}

Let $G$ be a 4-chromatic planar graph, and assume $C_4^0(G)$=$\{f_1,f_2,\ldots,f_n\}$.  The \emph{$\sigma$-reconfiguration graph} of $G$, written as $G^{\sigma}_4$, is a graph with vertex set $V(G^4_{\sigma})$=$\{f_1,f_2,$ $\ldots,f_n\}$, where  two vertices $f_i$ and $f_j$ are adjacent if and only if they are complement colorings, i.e.,
$\sigma(f_i,C)=f_j$ for some bichromatic cycle $C\in C^2(f_i)$, $i,j=1,2,\ldots,n$, $i\neq j$. By this definition,  $G^{\sigma}_4$ can be viewed as a weighted graph, in which each edge $e=f_if_j$ is weighted by the bichromatic $C$ such that $\sigma(f_i,C)=f_j$. If we consider only the topological structure, then the weights of $G^{\sigma}_4$ can be neglected \cite{r12,r20}. Please see the following examples for the illustration of $\sigma$-reconfiguration graphs.

 \begin{figure}[H]
  \centering
  \includegraphics[width=10cm]{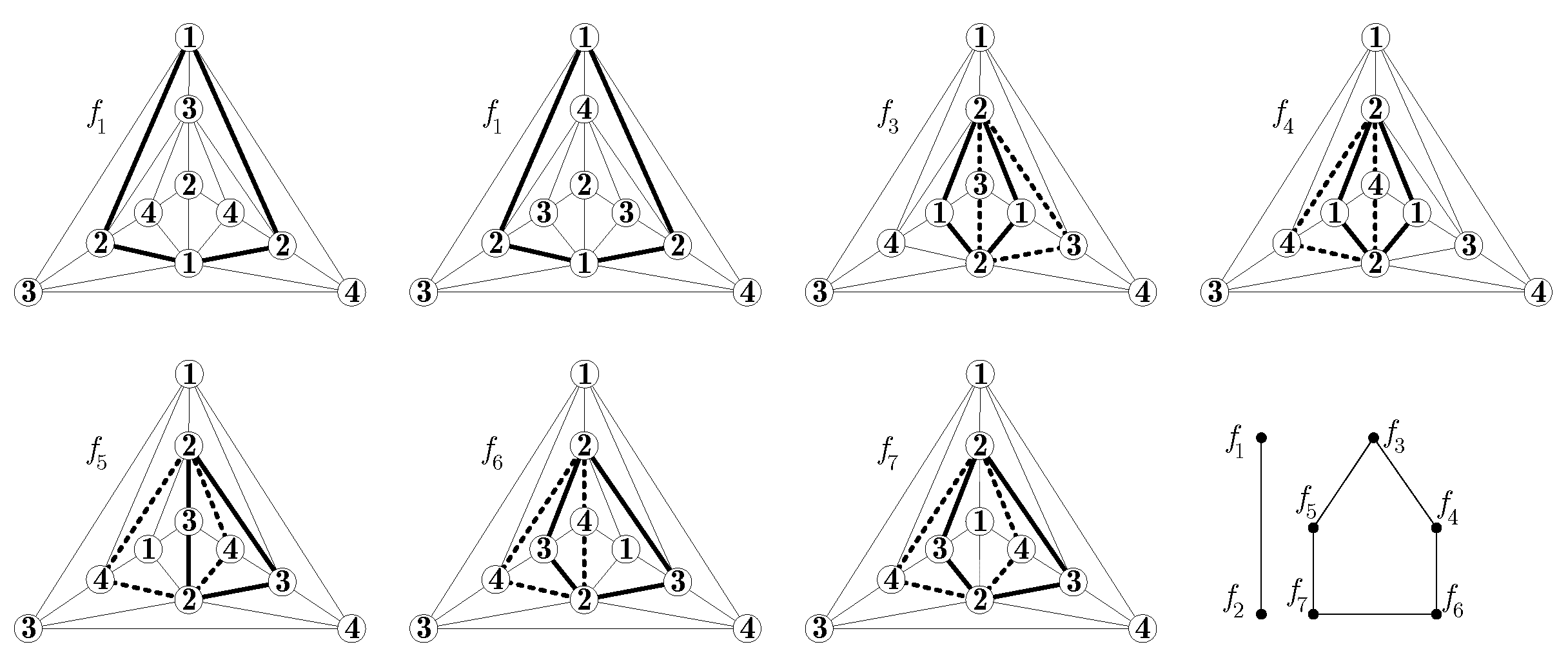}\\
  \caption{A cycle-type UBCMPG of order 10, all of its 4-colorings, and its $\sigma$-reconfiguration graph}\label{fig6}
\end{figure}

 \begin{figure}[H]
  \centering
  \includegraphics[width=13cm]{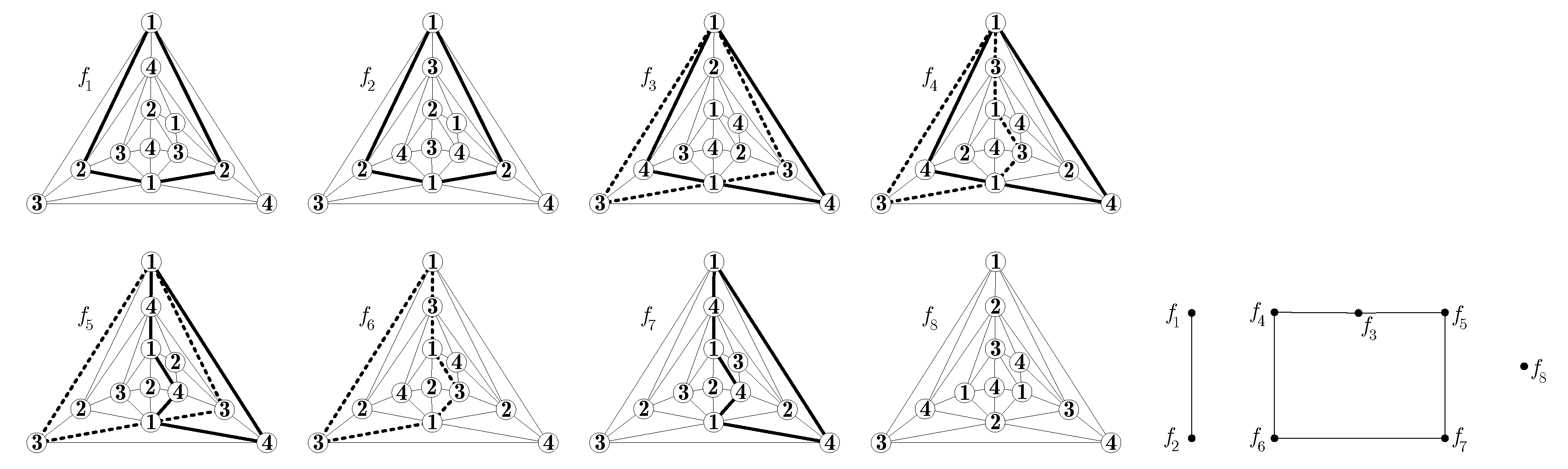}\\
  \caption {A hybrid-type UBCMPG, all of  its 4-colorings, and its $\sigma$-reconfiguration graph}\label{fig7}
\end{figure}

\section{Base-Modules}

\subsection{Definitions and Types}\label{sec4-1}
Let $G^C$ be an SMPG with respect to a cycle $C$, where $|V(C)|=\ell$. If there is another SMPG  $G_1^C$ with respect to $C$ such that $G^C\cap G_1^{C}=C$ and $G^C\cup G_1^{C}=G$ is a UBCMPG with respect to $C$, then we call $G^C$ and $G_1^C$ \emph{$\ell$-base-modules with respect to $C$} (or \emph{$\ell$-base-modules} for short). Let $f$ be a  UBC-coloring with respect to $C$ of $G=G^C\cup G_1^{C}$. We refer to  the restricted coloring of $f$ to $G^C$ and  $G_1^{C}$ as a \emph{module-coloring} of $G^C$ and  $G_1^{C}$, respectively.
Figure \ref{newfig2-1} (d) shows the minimum base-module, which is denote by $B^4$.

In this paper, we only consider 4-base-modules. We will discuss  $\ell$-base-modules ($\ell\geq 5$) and UBCMPGs in other articles.  4-base-modules can be divided into three categories in terms of their module-colorings. Let $G^{C_4}$ be a 4-base-module with respect to 4-cycle $C_4$ and $f$ a module-coloring of $G^{C_4}$.

\begin{itemize}
   \item If $C_4$ is the unique bichromatic cycle of $f$, then $G^{C_4}$ is called a  \emph{tree-type 4-base-module};
   \item If $f$ contains a UB-cycle $C'$ such that $C'\neq C_4$, then  $G^{C_4}$ is  a \emph{cycle-type 4-base-module}. Figure \ref{new1} (a) is a cycle-type 4-base-module, which contains a UB-cycle in the interior of $C_4=v_1v_2v_3v_4v_1$ (marked with  dashed bold lines);
   \item Let $C'(\neq C_4)$ be a bichromatic cycle of $f$. If $C'$ is not a UB-cycle of $f$, then $C'$ is called a \emph{cyclic-cycle} of $G^{C_4}$. If $\mathbb{C}=C^2(F^f(G^{C_4}))\setminus \{C_4\}$ is not empty and contains only cyclic-cycles, then  $G^{C_4}$ is called a  \emph{cyclic-cycle-type 4-base-module}. 
   \noindent Since  the subgraph of $G^{C_4}$ induced by the set of vertices belonging to a cycle $C'\in \mathbb{C}$ and its interior is  a SMPG, denoted by $G^{C'}$, it follows that  the union of $G^{C'}$ over all $C'\in \mathbb{C}$ is one or more SMPGs. Observe that each such SMPG $G^C$ is the union of some $G^{C^1}, G^{C^2}, \ldots, G^{C^k}$, where $C^i\in \mathbb{C}$ for $i=1,2,\ldots,k$; that is, $C=C^1\cup C^2\cup \ldots \cup C^k$. We call $G^C$ a \emph{family of cyclic-cycles} of $\mathbb{C}$, and  $C$ a \emph{shell} of $G^{C_4}$. Figure~\ref{new1}~(b) is a cyclic-cycle-type 4-base-module,
       in which the shell is marked with dashed bold lines. One can readily check that there is only one shell and one family of cyclic-cycles for this 4-base-module.
\end{itemize}

 \begin{figure}[H]
  \centering
 \includegraphics[width=7cm]{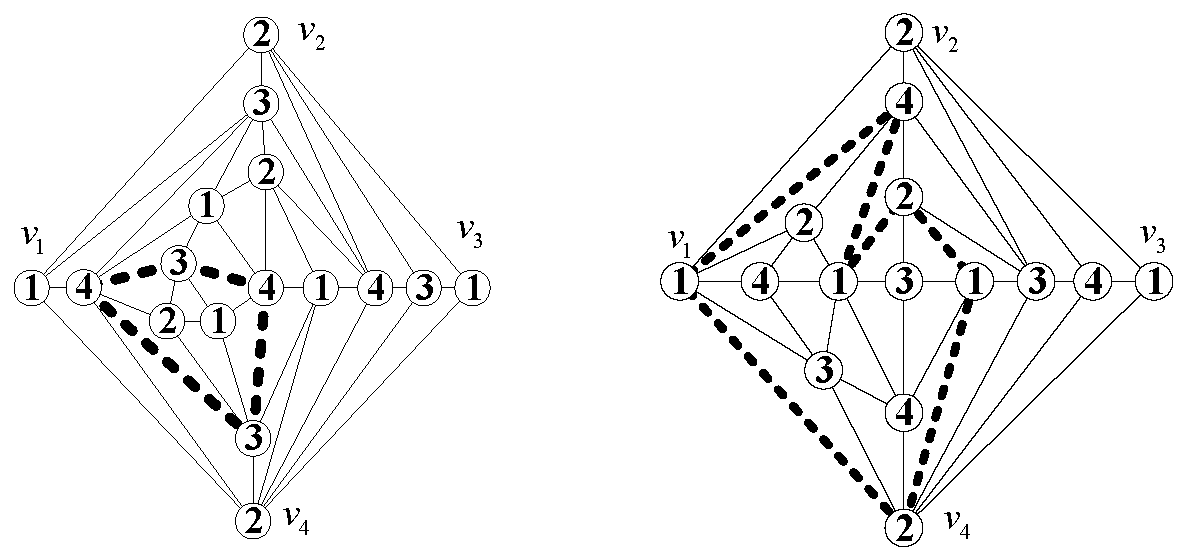}\\
  \caption {Two examples of cycle-type 4-base-module and cyclic-cycle-type 4-base-module: (a) cycle-type; (b) cyclic-cycle-type}\label{new1}
\end{figure}

\subsection{Properties of 4-base-modules}

Let $G^{C_4}$ be a 4-chromatic SMPG. If no specified note, we always let $C_4=v_1v_2v_3v_4v_1$.  Let $F_2(G^{C_4})(\subset C_4^0(G^{C_4}))$ be the set of 4-colorings of $G^{C_4}$ such that $C_4$ is colored with exact two colors. If $f\in F_2(G^{C_4})\neq \emptyset$, without loss of generality, we always assume $f(v_1)=f(v_3)=1, f(v_2)=f(v_4)=2$. Based on this agreement, we denote by $P_{1i}^{f}(v_1,v_3)$  the set of $1i$-path (under $f$) from $v_1$ to $v_3$ and by $P_{2i}^{f}(v_2,v_4)$  the set of $2i$-path (under $f$) from $v_2$ to $v_4$, where $i\in \{3,4\}$. We refer to the paths in $P_{1i}^{f}(v_1,v_3)$ and $P_{2i}^{f}(v_2,v_4)$ for $i=3,4$ as \emph{$ji$-endpoint-paths of $f$}, where $j=1,2$, and use  $\ell^{1i}$ and $\ell^{2i}$ to denote a path in $P_{1i}^{f}(v_1,v_3)$ and $P_{2i}^{f}(v_2,v_4)$, respectively. In particular, when $G^{C_4}$ is a 4-base-module and $f$ is a module-coloring of $G^{C_4}$,  these  endpoint-paths are called \emph{module-paths of $f$}.
Based on these notations, we have the following result.

\begin{theorem}\label{thm3.3}
Let $G^{C_4}$ be a 4-chromatic SMPG, $C_4=v_1v_2v_3v_4v_1$. If $F_2(G^{C_4})\neq \emptyset$, then for any $f\in F_2(G^{C_4})$, exact two of $P_{13}^{f}(v_1,v_3)$, $P_{14}^{f}(v_1,v_3)$, $P_{23}^{f}(v_2,v_4)$ and $P_{24}^{f}(v_2,v_4)$ are nonempty.
\end{theorem}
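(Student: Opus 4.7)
The plan is to split the claim into an ``at most two'' and an ``at least two'' direction. For the upper bound I would use a standard planar crossing argument. If $P_{13}^f(v_1,v_3)$ contains a path $L$, then $L$ together with the arc $v_1v_2v_3$ of the outer cycle $C_4$ forms a simple closed curve in the plane of $G^{C_4}$ enclosing a region that contains $v_2$ but not $v_4$. Any path from $v_2$ to $v_4$ in $G^{C_4}$ would have to cross $L$; since the color sets $\{1,3\}$ and $\{2,4\}$ are disjoint, a $\{2,4\}$-path shares no vertex with $L$, and two vertex-disjoint paths in a plane graph cannot cross. Hence $P_{13}^f(v_1,v_3) \ne \emptyset$ forces $P_{24}^f(v_2,v_4)=\emptyset$, and by the same argument $P_{14}^f(v_1,v_3) \ne \emptyset$ forces $P_{23}^f(v_2,v_4)=\emptyset$. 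This already gives at most two nonempty sets.

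For the lower bound it suffices to show that at least one of the pair $\{P_{13}^f(v_1,v_3),P_{24}^f(v_2,v_4)\}$ is nonempty; the analogous statement for $\{P_{14}^f(v_1,v_3),P_{23}^f(v_2,v_4)\}$ will then follow by interchanging the roles of the colors $3$ and $4$. Suppose $P_{13}^f(v_1,v_3)=\emptyset$ and let $H$ be the $\{1,3\}$-component of $v_1$ in $G^{C_4}$. Since $v_1,v_3$ are the only vertices of $C_4$ of color $1$ or $3$ and $v_3 \notin H$, we get $H \cap V(C_4)=\{v_1\}$. Any vertex of $N_{G^{C_4}}(H)\setminus H$ is adjacent to a vertex of color $1$ or $3$ yet lies outside the $\{1,3\}$-component of that vertex, so its color must be $2$ or $4$; in particular $v_2,v_4 \in N_{G^{C_4}}(H)\setminus H$ because both are adjacent to $v_1\in H$.

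To extract an explicit $\{2,4\}$-path from $v_2$ to $v_4$ I would pass to the closed star $\widehat H$ of $H$ in the triangulated disk $G^{C_4}$, that is, the union of all closed triangles having at least one vertex in $H$. Because $H$ is connected and meets $C_4$ only at $v_1$, the component of the topological boundary of $\widehat H$ passing through $v_1$ is a simple closed curve made up of the arc of $C_4$ from $v_2$ through $v_1$ to $v_4$ together with a simple path from $v_2$ to $v_4$ lying in the interior of $G^{C_4}$. Every vertex of this interior path belongs to $N_{G^{C_4}}(H)\setminus H$ and is therefore colored $2$ or $4$, so the path itself witnesses $P_{24}^f(v_2,v_4)\ne\emptyset$. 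Running the same argument with the $\{1,4\}$-component of $v_1$ in place of the $\{1,3\}$-component yields the analogous conclusion for the second pair.

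The step I expect to be the main obstacle is the topological assertion that the boundary component of $\widehat H$ through $v_1$ really is a single simple closed curve whose interior part is an honest path in $G^{C_4}$ from $v_2$ to $v_4$, rather than a walk that revisits vertices or a union of several arcs (this could in principle fail if $\widehat H$ had a ``hole'' in the disk). The essential combinatorial input is that, in any triangulation of a closed disk, the link of a connected vertex set meeting the outer boundary in exactly one vertex is a single arc in the graph; I would establish this by induction on $|V(H)|$, starting from the base case $H=\{v_1\}$ (where the link is just the standard link of a boundary vertex, which is a path) and checking that attaching a new vertex of $H$ adjacent to the current subgraph modifies the link only locally. Once this is in hand, combining the upper bound of the first paragraph with the two instances of the lower bound gives exactly one nonempty set from each of $\{P_{13}^f,P_{24}^f\}$ and $\{P_{14}^f,P_{23}^f\}$, hence exactly two nonempty sets in total.
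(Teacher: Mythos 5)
Your proposal is correct and follows essentially the same route as the paper. The paper's proof consists of citing the Kempe-chain dichotomy (for each of the two diagonal pairs, exactly one of the two blocking bichromatic paths exists) as a known fact and reading off the conclusion; you expand that one-line proof by actually establishing both halves of the dichotomy — the Jordan-curve crossing argument for ``at most one of $\{P_{13},P_{24}\}$ nonempty'' and the component-link argument for ``at least one nonempty'' — and you correctly flag the single genuinely delicate step (that the relevant boundary component of the closed star of a connected $\{1,3\}$-component meeting $C_4$ only at $v_1$ is a single $v_2$--$v_4$ path in the triangulated disk), which is exactly the point the paper's terse proof leaves implicit.
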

\begin{proof}
The conclusion follows directly from the fact that $P_{13}^{f}(v_1,v_3)=\emptyset$ if and only if $P_{24}^{f}(v_2,v_4)\neq \emptyset$, and $P_{14}^{f}(v_1,v_3)=\emptyset$ if and only if $P_{23}^{f}(v_2,v_4)\neq \emptyset$. \qed
\end{proof}

 Based on Theorem \ref{thm3.3}, the 4-colorings in $F_2(G^{C_4})$ can be classified into two classes: \emph{cross-coloring} and \emph{shared-endpoint-coloring}.
 Let $f\in F_2(G^{C_4})$.  If $P^f_{1i}(v_1,v_3)\neq \emptyset$ and $P^f_{2i}(v_2,v_4)\neq \emptyset$ for some $i\in \{3,4\}$, then $f$ is called a \emph{cross-coloring} of $G^{C_4}$; see Figure \ref{fig3-2}(a) for an example of \emph{ cross-colorings};  if $P^f_{1i}(v_1,v_3)=\emptyset$ for every $i\in \{3,4\}$ or $P^f_{1i}(v_1,v_3)\neq\emptyset$ for every $i\in \{3,4\}$, then  $f$ is called a \emph{shared-endpoint-coloring} of $G^{C_4}$ on $\{v_2,v_4\}$ (when $P^f_{1i}(v_1,v_3)=\emptyset, i=3,4$) or a \emph{shared-endpoint-coloring} of $G^{C_4}$ on $\{v_1,v_3\}$ (when $P^f_{1i}(v_1,v_3)\neq \emptyset, i=3,4$), respectively; see Figure \ref{fig3-2}(b) for an example of \emph{shared-endpoint-colorings} on $\{v_2,v_4\}$.

 \begin{figure}[H]
  \centering
 \includegraphics[width=6cm]{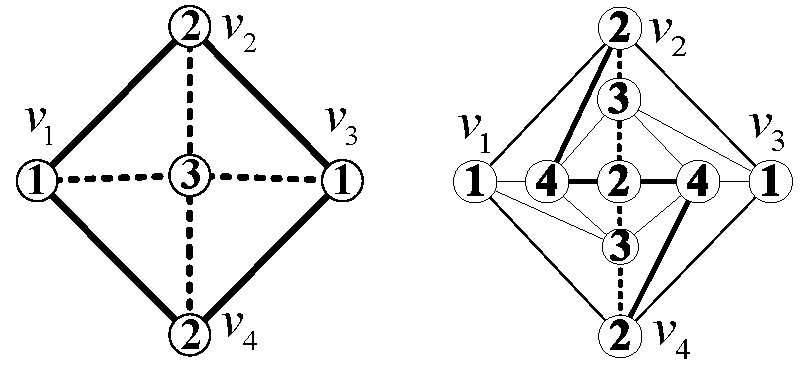}\\
  (a) $f$ \hspace{2cm} (b)  $f_1$
  \caption {Illustration for  cross-colorings and  shared-endpoint-colorings}\label{fig3-2}
\end{figure}

\begin{theorem} \label{thm3.4}
Let  $G^{C_4}$ be a 4-chromatic SMPG such that $F_2(G^{C_4})\neq \emptyset$, where $C_4=v_1v_2v_3v_4v_1$. Then, $G^{C_4}$ is a 4-base-module if and only if there exists a $f_0\in F_2(G^{C_4})$ such that  $F^{f_0}(G^{C_4})$ contains only shared-endpoint-colorings on either $\{v_2,v_4\}$ or $\{v_1,v_3\}$.
\end{theorem}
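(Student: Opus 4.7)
The plan is to prove necessity and sufficiency separately, linking the UB-cycle property of $C_4$ in $G=G^{C_4}\cup G_1^{C_4}$ to the Kempe-chain structure inside $G^{C_4}$ through Theorem \ref{thm2.1}.

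For necessity, suppose $G^{C_4}$ is a 4-base-module witnessed by some $G_1^{C_4}$ and UBC-coloring $F$, and set $f_0=F|_{G^{C_4}}$. The first observation is that every $\sigma$-operation on a bichromatic cycle $C'\subseteq G^{C_4}$ is also a $\sigma$-operation in $G$ whose interior is the same region (since $C'$ lies within the closed disk bounded by $C_4$), so any $\sigma$-sequence from $f_0$ to $f$ inside $G^{C_4}$ lifts to a $\sigma$-sequence in $G$ that leaves $F|_{G_1^{C_4}}$ intact. Consequently every $f\in F^{f_0}(G^{C_4})$ is the restriction of some $F'\in F^F(G)$, and because $C_4$ is a UB-cycle of $G$, $f\in F_2(G^{C_4})$. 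Next, suppose for contradiction that some $f\in F^{f_0}(G^{C_4})$ is a cross-coloring, say with $P^f_{13}(v_1,v_3)\neq\emptyset$ and $P^f_{23}(v_2,v_4)\neq\emptyset$. Applying Theorem \ref{thm3.3} to $F|_{G_1^{C_4}}$ produces two nonempty endpoint-path sets in $G_1^{C_4}$; if one of them shares a color set with $P^f_{13}$ or $P^f_{23}$, I union the matching inside and outside endpoint-paths into a bichromatic cycle $C^*$ in $G$, and otherwise I first perform $\sigma(F',C_4)$ to swap colors $3$ and $4$ inside $G^{C_4}$, which transforms the cross-coloring into a $\{1,4\}/\{2,4\}$ cross-coloring, at which point a matching outside path is guaranteed and $C^*$ is constructed. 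By planarity, $C^*$ strictly encloses exactly one vertex of $V(C_4)\setminus V(C^*)$, so $C_4$ and $C^*$ are intersecting with $F(C^*)\neq\{1,2\}$, contradicting Theorem \ref{thm2.1}. An analogous planarity argument using the two endpoint-paths of a shared-endpoint-coloring excludes the coexistence of shared-endpoint-colorings on opposite pairs in $F^{f_0}(G^{C_4})$.

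For sufficiency, assume without loss of generality that $F^{f_0}(G^{C_4})$ consists only of shared-endpoint-colorings on $\{v_2,v_4\}$. I would construct $G_1^{C_4}$ together with an extension of $f_0$ to a 4-coloring $F$ of $G$ so that the Kempe-equivalence class of $F|_{G_1^{C_4}}$ consists of shared-endpoint-colorings on the opposite pair $\{v_1,v_3\}$; a natural candidate is a reflected copy of the minimum base-module $B^4$ of Figure \ref{newfig2-1}(d), glued to $G^{C_4}$ along $C_4$. With this opposite-pair pairing, every bichromatic cycle in $C^2(F^F(G))$ with color set different from $\{1,2\}$ either lies entirely on one side of $C_4$, in which case the shared-endpoint property on that side places no $C_4$-vertex in its strict interior, or it spans both sides, in which case it requires two internally disjoint bichromatic paths sharing a pair of $C_4$-endpoints; no such pair exists because $\{1,i\}$-paths joining $v_1,v_3$ are forbidden inside while $\{2,i\}$-paths joining $v_2,v_4$ are forbidden outside. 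Theorem \ref{thm2.1} then yields that $C_4$ is a UB-cycle of $G$, so $G^{C_4}$ is indeed a 4-base-module.

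The main obstacle is the geometric control of spanning bichromatic cycles: by planarity, two internally disjoint bichromatic paths sharing a pair of endpoints on $C_4$ always form a cycle enclosing exactly one of the two remaining $C_4$-vertices, hence intersecting $C_4$. The entire argument therefore reduces to matching the two sides' shared-endpoint pairs correctly. In the sufficiency direction this matching must be engineered by the explicit choice of $G_1^{C_4}$, while in the necessity direction a cross-coloring inside combined with Theorem \ref{thm3.3} outside always forces, possibly after one intermediate $\sigma(F',C_4)$, the existence of a matching endpoint-path outside that closes the inside path into an intersecting cycle. Making this closure argument uniform over the full Kempe-equivalence class, and verifying that the ``reflected $B^4$'' style construction really has only opposite-pair shared-endpoint-colorings in its Kempe class, is the technical heart of the proof.
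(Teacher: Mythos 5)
Your proposal follows essentially the same route as the paper: for necessity you lift $\sigma$-operations inside $G^{C_4}$ to $\sigma$-operations in $G$ and derive an intersecting bichromatic cycle from any cross-coloring (or from a mixed pair of shared-endpoint-colorings), matching the paper's ``$f_c\cup f_1$ or $f_c\cup\sigma(f_1,C_4)$'' case split; for sufficiency you glue on a copy of $B^4$ colored so that it carries endpoint-paths on the opposite diagonal and invoke Theorem~\ref{thm2.1}, exactly as the paper does with the coloring $g$ of Figure~\ref{fig3-4}(b). The only cosmetic difference is that the paper does not need a ``reflected'' $B^4$---it simply chooses $g$ on $B^4$ so that its endpoint-paths already lie on $\{v_1,v_3\}$---and, like you, the paper does not belabor the verification that the resulting extended colorings exhaust the Kempe class of $G$.
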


\begin{proof}
(Necessity) Suppose that $G^{C_4}$ is a 4-base-module, and let $G_1^{C_4}$ be an arbitrary SMPG with respect to $C_4$ such that $G_1^{C_4}\cap G^{C_4}=C_4$ and $G=G_1^{C_4}\cup G^{C_4}$ is a UBCMPG with respect to $C_4$. Clearly, $F_2(G_1^{C_4})\neq \emptyset$, i.e., there exists a coloring $f_1\in F_2(G_1^{C_4})$ such that $|f_1(C_4)|=2$, where  $f_1(v_1)=f_1(v_3)=1$ and $f_1(v_2)=f_1(v_4)=2$. If for any $f\in F_2(G^{C_4})$,  $F^{f}(G^{C_4})$ always contains a cross-coloring $f_c$, then $f_c\cup f_1$ or $f_c\cup \sigma(f_1, C_4)$ contains a bichromatic cycle of $G$ that is intersect with $C_4$. By Theorem \ref{thm2.1}, $G$ is not an  UBCMPG, a contradiction. Therefore, there exists a $f^0\in F_2(G^{C_4})$ such that  $F^{f^0}(G^{C_4})$ contains no cross-coloring, i.e., $F^{f^0}(G^{C_4})$ contains only   shared-endpoint-colorings on $\{v_2,v_4\}$ or $\{v_1,v_3\}$ (by Theorem \ref{thm3.3}). Suppose that there are two $f', f''\in F^{f^0}(G^{C_4})$ such that $f'$ is a shared-endpoint-coloring  on $\{v_1,v_3\}$ and $f''$ is a shared-endpoint-coloring on $\{v_2,v_4\}$ (see Figure \ref{fig3-3} for an illustration of this case). Analogously, we see that $f_1 \cup f'$ or $f_1 \cup f''$ contains a bichromatic cycle of $G$ that intersects with $C_4$, and also a contradiction.

\begin{figure}[H]
  \centering
 \includegraphics[width=7cm]{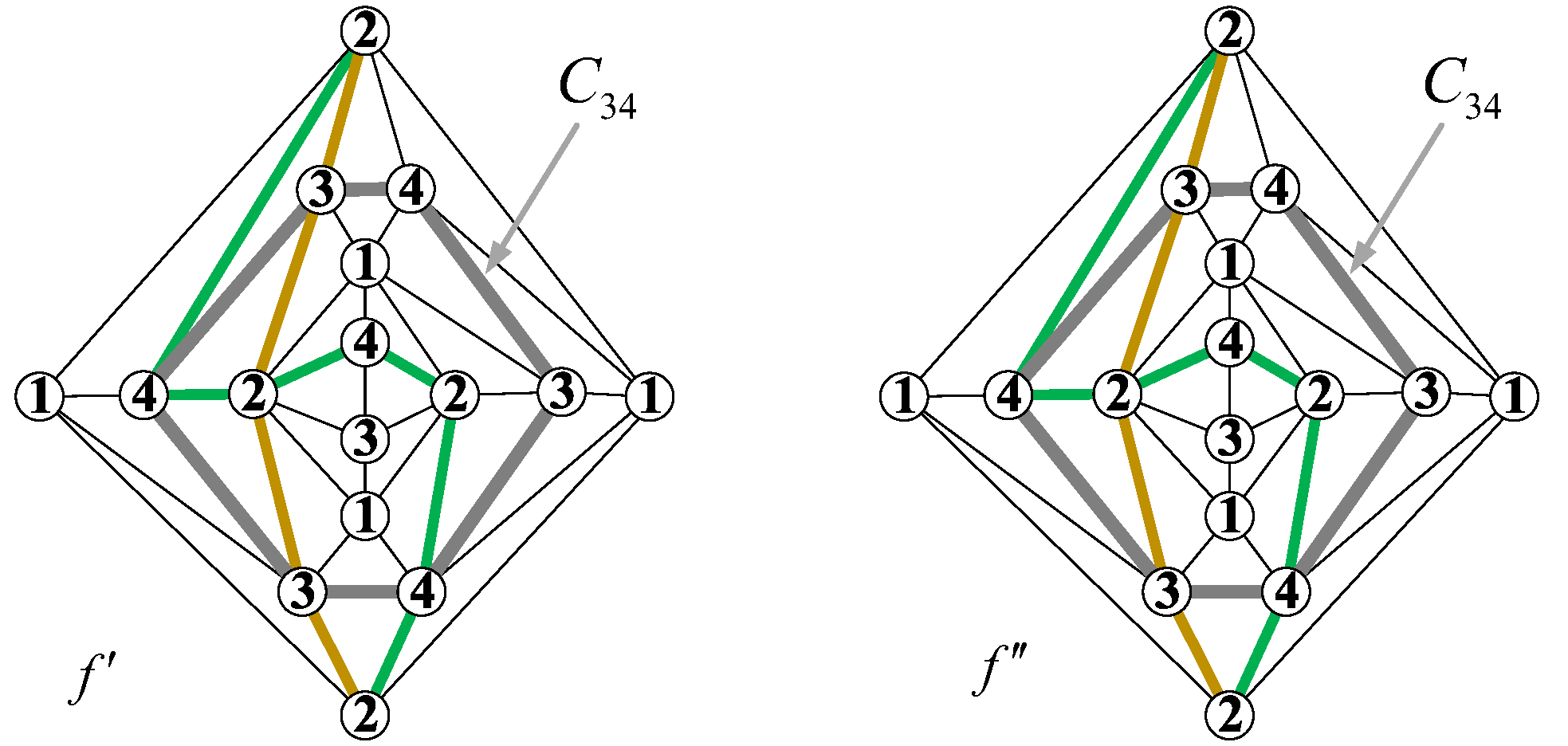}\\
  \caption {An illustration for  the proof of Theorem \ref{thm3.4}}\label{fig3-3}
\end{figure}

(Sufficiency) Let $f^0\in F_2(G^{C_4})$ such that $F^{f_0}(G^{C_4})$ contains only shared-endpoint-colorings $f$ on $\{v_2,v_4\}$ (see Figure \ref{fig3-4} (a)). Now, let $g$ be the 4-coloring of $B^4$ shown in Figure \ref{fig3-4} (b), and let $G=G^{C_4}\cup B^4$. For any $f\in F^{f_0}(G^{C_4})$, let $f'$ be the extended  coloring of $f$ to $G$ such that $f'(B^4)=g(B^4)$. By the assumption, we see that $f$ contains 23-endpoint-path and 24-endpoint-path. However, $B^4$ contains neither 23-endpoint-path nor 24-endpoint-path. Therefore, in $G$, $f'$ contains no $2i$-cycle $(i=3,4)$ that intersects with $C_4$. Additionally, the fact that in $B^4$ $g$ contains both 13-endpoint-path and 14-endpoint-path while in $G^{C_4}$ $f$ contains no 13-endpoint or 14-endpoint, implies that  in $G$, $f'$ contains no $1i$-cycle  $(i=3,4)$ that intersects with $C_4$. So, by Theorem \ref{thm2.1}, $G$ is a UBCMPG  with respect to $C_4$. \qed

\begin{figure}[H]
  \centering
 \includegraphics[width=6cm]{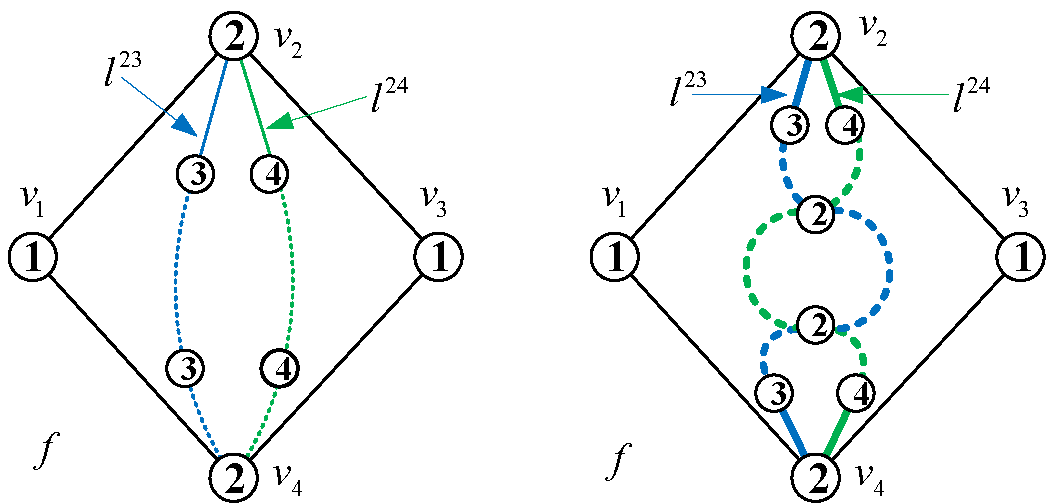}\hspace{1cm}
  \includegraphics[width=2.5cm]{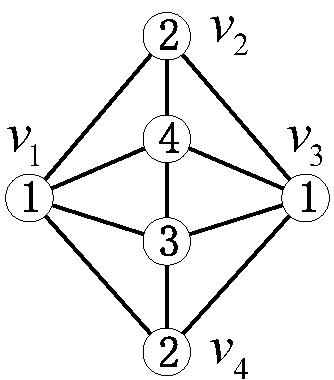}\\
  \hspace{0.5cm}(a) A diagram of shared-endpoint-coloring \hspace{1.5cm} (b)a 4-coloring $g$ of $B^4$
  \caption {An illustration for  the proof of Theorem \ref{thm3.4}}\label{fig3-4}
\end{figure}
 \end{proof}

{\bf Remark 5}
For a given 4-base-module $G^{C_4}$ with respect to $C_4$, let $f$ be
a module-coloring of $G^{C_4}$. In the following, if no specified note, we always assume that a module-path of $f$ is a 23-module-path or a 24-module-path of $f$.

\begin{figure}[H]
  \centering
 \includegraphics[width=7cm]{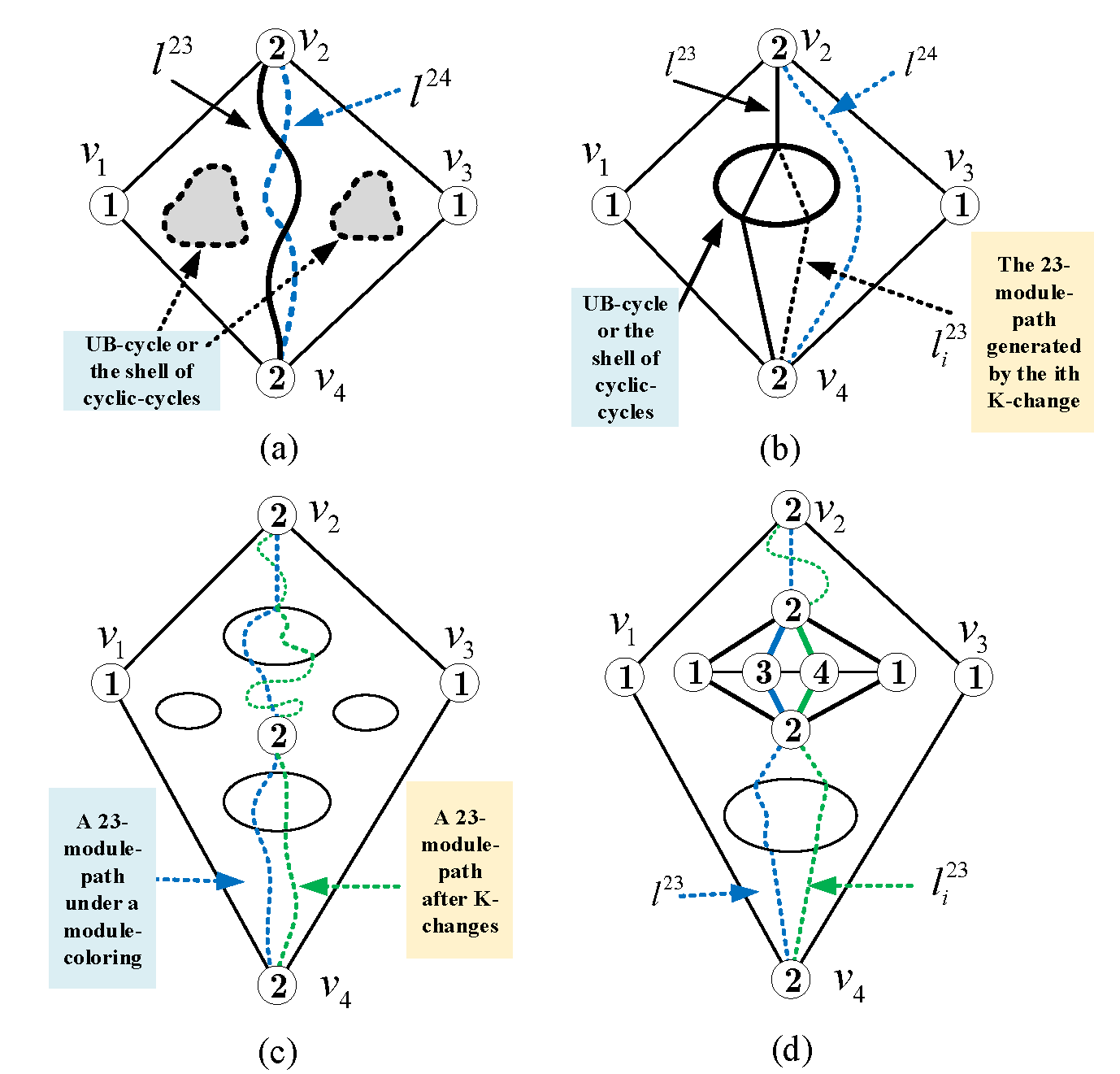}\\
  \caption {The structure and types of cycle-type and cyclic-type 4-base-modules. (a)  SMP-type 4-base-module, (b) one UB-cycle or one family of cyclic-cycles module-path, (c) more than one UB-cycle or one family of cyclic-cycles module-paths, (d) a specific example of UB-cycle}\label{fig3-5}
\end{figure}

According to Theorem \ref{thm3.4}, we have the following corollary.

\begin{corollary}\label{cor3.3}
Suppose that $G^{C_4}$ is a tree-type 4-base-module. Then, there exists a coloring  $f_0\in F_2(G^{C_4})$ such that $F^{f_0}(G^{C_4})=\{f_0\}$ and $f_0$ contains a unique 23-module-path and a unique 24-module-path.
\end{corollary}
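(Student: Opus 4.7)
The plan is to take $f_0$ to be the tree-type module-coloring itself—that is, a coloring of $G^{C_4}$ for which $C_4$ is the unique bichromatic cycle—and then to verify in order that $f_0\in F_2(G^{C_4})$, that $F^{f_0}(G^{C_4})=\{f_0\}$, and that the $23$- and $24$-module-paths of $f_0$ exist and are unique. Membership in $F_2(G^{C_4})$ is immediate, since $C_4$ being bichromatic forces $|f_0(C_4)|=2$.

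For the collapse $F^{f_0}(G^{C_4})=\{f_0\}$, note that the only bichromatic cycle of $f_0$ is $C_4$, so the only admissible $\sigma$-operation is $\sigma(f_0,C_4)$; this swaps colors $3$ and $4$ on the vertices in the interior of $C_4$. Since $V(C_4)$ is colored with $\{1,2\}$, every vertex of $G^{C_4}$ colored $3$ or $4$ already lies in that interior, and hence the interior swap coincides with the global transposition $3\leftrightarrow 4$, a mere relabeling that yields a coloring equivalent to $f_0$. Because $C_4^0(G^{C_4})$ retains at most one representative per equivalence class, $\sigma(f_0,C_4)$ contributes no new element and iteration only repeats the same move. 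To see that $f_0$ is a shared-endpoint-coloring rather than a cross-coloring (and hence, after possibly relabeling $C_4$, shared on $\{v_2,v_4\}$), suppose $f_0$ were cross; then the singleton Kempe class $F^{f_0}(G^{C_4})=\{f_0\}$ would consist entirely of a cross-coloring, and the argument from the necessity part of Theorem \ref{thm3.4} applied to any partner $G_1^{C_4}$ with $G=G^{C_4}\cup G_1^{C_4}$ a UBCMPG would produce, via Theorem \ref{thm2.1}, a bichromatic cycle of $G$ intersecting $C_4$, contradicting that $G^{C_4}$ is a $4$-base-module. Hence by Theorem \ref{thm3.3} both $P_{23}^{f_0}(v_2,v_4)$ and $P_{24}^{f_0}(v_2,v_4)$ are non-empty.

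Uniqueness of each module-path is then a direct cycle-counting consequence of the tree-type hypothesis: two distinct $23$-paths $P_1\ne P_2$ from $v_2$ to $v_4$ both lie in the bipartite subgraph $G_{23}^{f_0}$, so $P_1\cup P_2$ contains a cycle, which is a bichromatic $23$-cycle distinct from $C_4$, contradicting that $C_4$ is the unique bichromatic cycle of $f_0$. The same argument excludes two distinct $24$-module-paths. The main obstacle is ruling out the cross-coloring case for $f_0$, which hinges on the interplay between Theorem \ref{thm3.4} and Theorem \ref{thm2.1}; once that is handled, the Kempe-class collapse (via the interior-vs-global color swap) and the cycle argument for uniqueness are routine.
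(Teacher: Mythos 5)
Your proof is correct and takes essentially the same route the paper implicitly intends (the paper presents Corollary~\ref{cor3.3} as a direct consequence of Theorem~\ref{thm3.4} without a written argument). You take $f_0$ to be the tree-type module-coloring, use the Theorem~\ref{thm3.4}/Theorem~\ref{thm2.1} mechanism to exclude the cross-coloring possibility, observe that the only available $\sigma$-operation is $\sigma(f_0,C_4)$ which acts as a global $3\leftrightarrow 4$ transposition (hence $F^{f_0}(G^{C_4})=\{f_0\}$ in $C_4^0$), and derive uniqueness of the $23$- and $24$-module-paths from the fact that two such paths would close up into a bichromatic cycle distinct from $C_4$, contradicting the tree-type hypothesis.
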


Cycle-type 4-base-modules and cyclic-cycle-type 4-base-modules can further be divided into two classes in terms of module-paths: single module path type (SMP-type) and multi-module path type (MMP-type).

Let $G^{C_4}$ be a cycle-type or cyclic-cycle-type 4-base-module and $f_0$ be a module-coloring of  $G^{C_4}$. If all colorings in $F^{f_0}(G^{C_4})$ have the same 23-module-path and 24-module-path, denoted by $\ell^{23}$ and $\ell^{24}$, then $G^{C_4}$ is called a SMP-type 4-base-module. Clearly, if $G^{C_4}$  is an SMP-type 4-base-module, then  no vertex belonging to $\ell^{23}$ or $\ell^{24}$ is in the interior of UB-cycles or cyclic-cycles of $G^{C_4}$; see Figure \ref{fig3-5} (a).

For example, in the cycle-type 4-base-module and the cyclic-cycle-type 4-base-module  shown in Figure \ref{new1}, the number of module-colorings in the equivalence class is 2 and 5, respectively.

Let $G^{C_4}$ be a cycle-type or cyclic-cycle-type 4-base-module. $G^{C_4}$ is called an MMP-type 4-base-module, if there exists a module-coloring $f_1\in F^{f_0}(G^{C_4})$, and under $f_1$ there is a  UB-cycle or a shell $C$ of $G^{C_4}$ such that the interior of $C$ contains a vertex of module-paths.
Figure \ref{fig3-5}(b) presents an example, in which there is only one UB-cycle or one family of cyclic-cycles; Figure \ref{fig3-5}(c) presents an example, in which there are more than one UB-cycle or one family of cyclic-cycles;  Figure \ref{fig3-5}(d) gives a specific example of UB-cycle based on Figure \ref{fig3-5}(c).

In the above, we discussed the characteristics of cycle-type and cyclic-cycle-type 4-base-modules. We will give a more detailed discussion in Section \ref{sec5}.

\subsection{Extending wheel operations and 4-base-modules}
\label{sec:ecsystem}

In \cite{r18}, the author introduced the extending and contracting system (EC-system) $<K_4; \Phi=\{\zeta^+_2,\zeta^+_3,\zeta^+_4,\zeta^+_5, \zeta^-_2,\zeta^-_3,\zeta^-_4,\zeta^-_5\}>$, in which $K_4$ is the starting graph, and $\zeta^+_i$ and $\zeta^-_i$ are a pair of operators (called extending and contracting $i$-wheel operation, respectively), $i=2,3,4,5$. Because we will use this system to prove our subsequent conclusions, here we give a description of these operations.

 \begin{figure}[H]
  \centering
  \includegraphics[width=6.5cm]{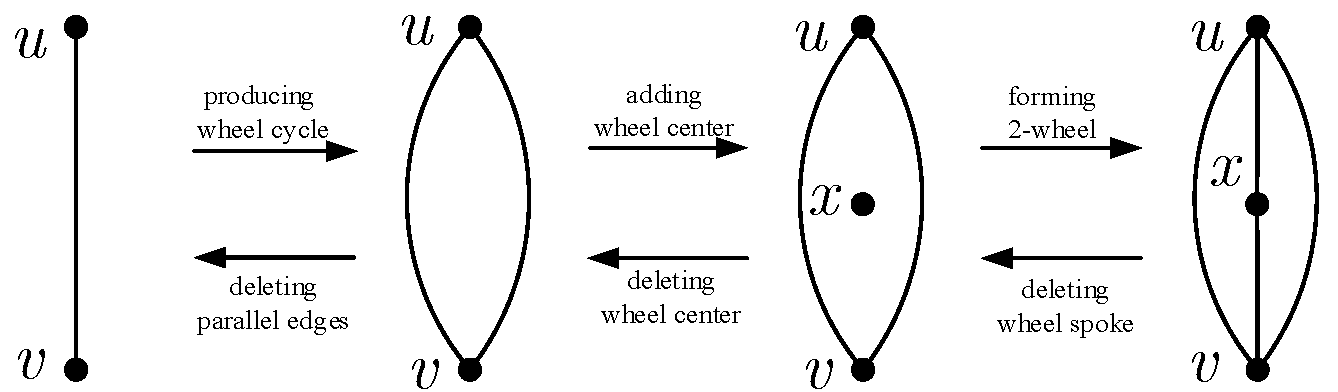} \hspace{0.2cm}
   \includegraphics[width=4cm]{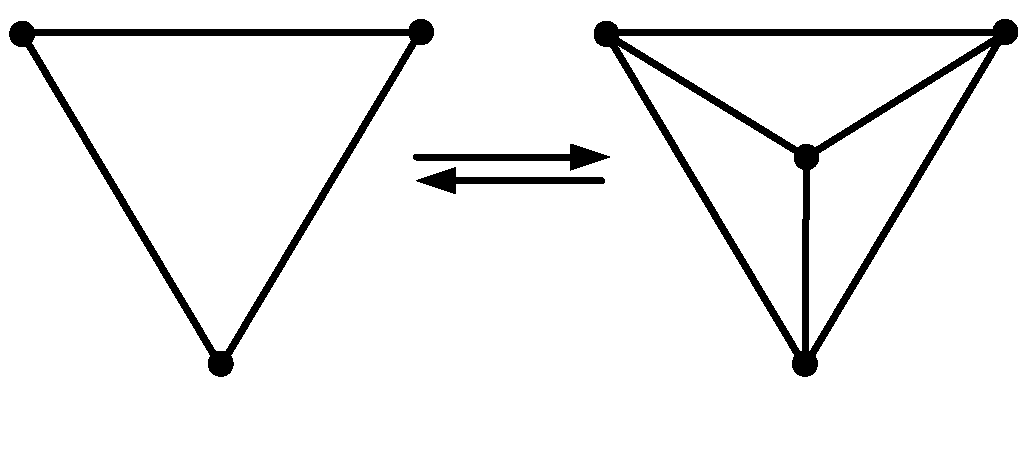}\\
   \hspace{2.3cm}(a)\hspace{6cm} (b)
  \caption {(a) E2WO and C2WO ~~(b) E3WO and C3WO }\label{23-wheel}
\end{figure}

\emph{The extending $2$-wheel operation} (E2WO):  adding a new edge first between two adjacent vertices $u,v$ (this yields two parallel edges between $u$ and $v$), and then adding a new vertex $x$ on the face bounded by these two parallel edges and connecting $x$ to $u$ and $v$ (this generates a $2$-wheel). \emph{The contracting $2$-wheel operation} (C2WO): for a given 2-wheel $x$-$uvu$, removing first its center $x$ and the edges $xu$ and $xv$, and then one parallel edge. The whole process is shown in Figure \ref{23-wheel}(a). We use $\zeta^+_2$ and $\zeta^-_2$ to denote the E2WO and C2WO, respectively.

\emph{The extending $3$-wheel operation} (E3WO), denoted by $\zeta^+_3$, refers to the transformation from triangle to a 3-wheel, and its inverse process is \emph{the contracting $3$-wheel operation} (C3WO), denoted by $\zeta^-_3$; see Figure \ref{23-wheel}(b).

\begin{figure}[H]
  \centering
  \includegraphics[width=11.5cm]{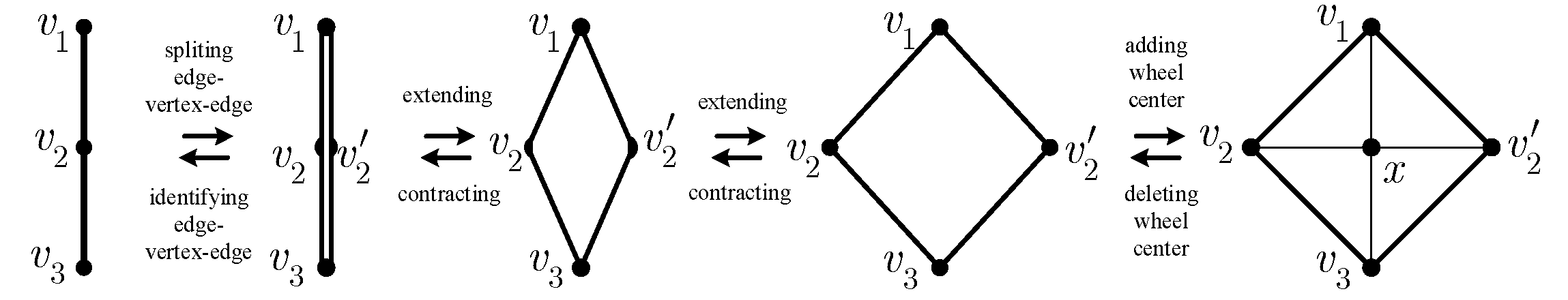}
  \caption {E4WO and C4WO}\label{4-wheel}
\end{figure}

\begin{figure}[H]
  \centering
  \includegraphics[width=12.5cm]{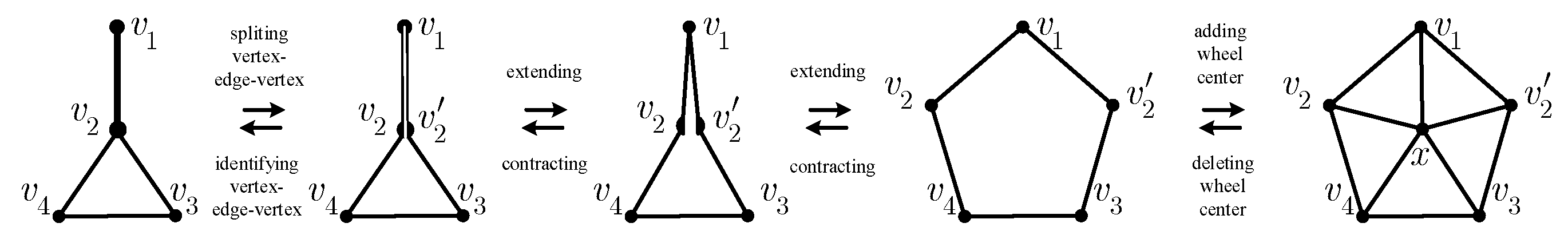}
  \caption {E5WO and C5WO}\label{5-wheel}
\end{figure}

\emph{The extending $4$-wheel operation} (E4WO), denoted by $\zeta^+_4$, refers to the transformation from a path $P_3=v_1v_2v_3$ of length 2 to a 4-wheel, and its inverse process is \emph{the contracting $4$-wheel operation} (C4WO), denoted by $\zeta^-_4$; see Figure \ref{4-wheel}. Note that in the process of $\zeta^+_4$,  edge $v_1v_2$, vertex $v_2$ and edge $v_2v_3$ are split into $v_1v_2$ and $v_1v'_2$, $v_2$ and $v'_2$, and $v_2v_3$ and $v'_2v_3$, respectively, and  edges incident with $v_2$ lied on left side of $P_3$ (in the original graph) are incident with $v_2$ while edges incident with $v_2$ lied on right side (in the original graph) of $P_3$ are incident with $v'_2$; in the process of $\zeta^-_4$, $v_2$ and $v'_2$ are identified into a new vertex incident with all edges incident with  $v_2$ and $v'_2$ in the original graph. $v_2$ and $v'_2$ are called \emph{contracted vertices}.

\emph{The extending $5$-wheel operation} (E5WO), denoted by $\zeta^+_5$, refers to the transformation from a funnel (the first graph from the left shown in Figure \ref{5-wheel}) to a 5-wheel, and its inverse process is \emph{the contracting $5$-wheel operation} (C5WO), denoted by $\zeta^-_5$; see Figure \ref{5-wheel}. Note that in the process of $\zeta^+_5$, vertex $v_2$  are split into  $v_2$ and $v'_2$, edge $v_1v_2$ into $v_1v_2$ and $v_1v'_2$,  and edges incident with $v_2$ lied on left side of path $v_1v_2v_4$ (in the original graph) are incident with $v_2$ while edges incident with $v_2$ lied on right side of path $v_1v_2v_4$ (in the original graph)  are incident with $v'_2$; in the process of $\zeta^-_5$, $v_2$ and $v'_2$ are identified into a new vertex incident with all edges incident with  $v_2$ and $v'_2$ in the original graph. $v_2$ and $v'_2$ are called \emph{contracted vertices}.

{\bf Remark 6}. {In this paper, the C$i$WO ($i=2,3,4,5$) is always assumed to be implemented under a given 4-coloring. When $i=4$, the two vertices of the 4-wheel assigned with the same color are contracted vertices. In addition, based on a 4-coloring,  if the object of E$i$WO (i=2,3,4,5) are colored with at most three colors, then in the resulting graph we color the wheel center (the new vertex) with a color not assigned to vertices of wheel cycle and remain other vertices' color unchanged; in particular, when $i=4,5$, we color $v'_2$ with the color assigned to $v_2$.

Let $G^C$ be a 4-chromatic SMPG, $f\in C_4^0(G^C)$, and $W_4=u$-$u_1u_2u_3u'_2u_1$ be a 4-wheel subgraph of $G^C$, where $u$ is the wheel center of $W_4$ and $C_4^1=u_1u_2u_3u'_2u_1$ is the cycle of $W_4$. If $|\{u_2,u'_2\}\cap V(C)|\leq 1$ and  $f(u_2)=f(u'_2)$, then we call $W_4$ a \emph{contractible 4-wheel} with respect to $f$; see Figure \ref{fig3-6} (a).

 \begin{figure}[H]
  \centering
  \includegraphics[width=7cm]{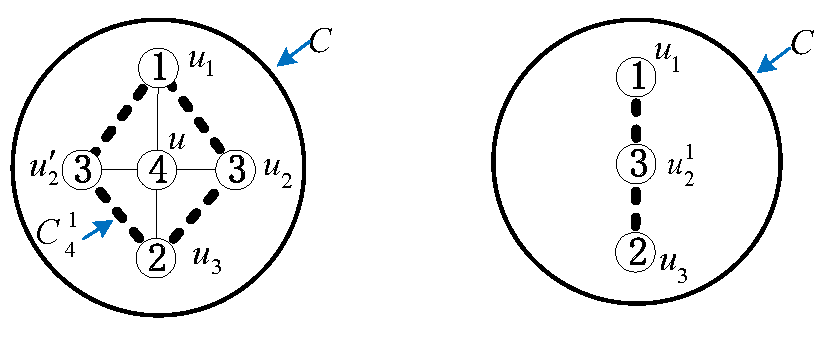}\\
  (a) $G^C$ \hspace{3.2cm}(b) $H^C$
  \caption {Illustration for contractible 4-wheels}\label{fig3-6}
\end{figure}

 \begin{figure}[H]
  \centering
  \includegraphics[width=12cm]{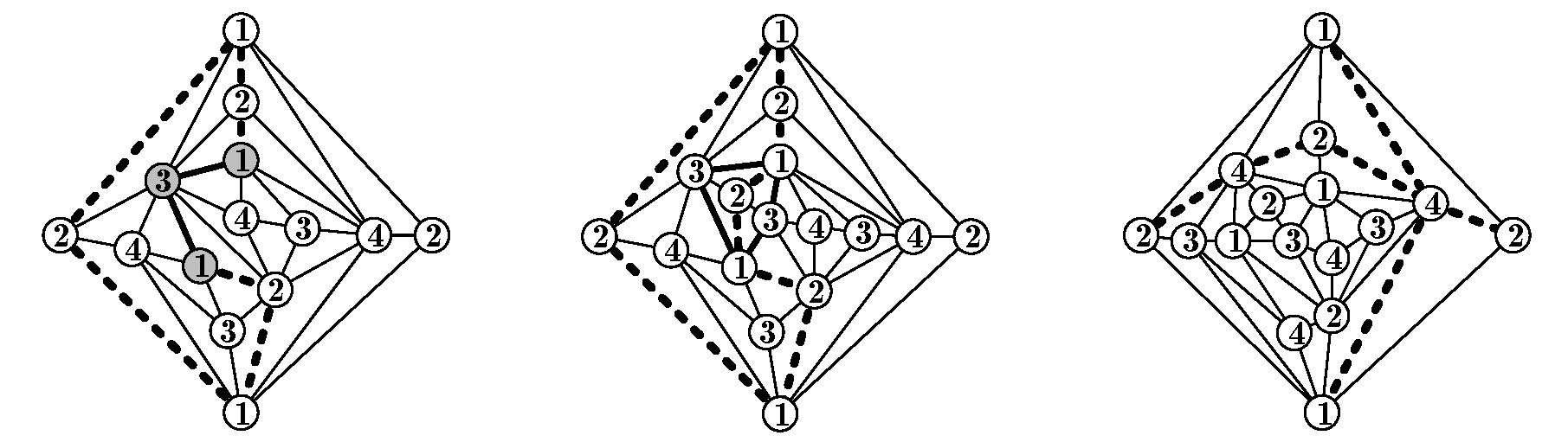}\\
  (a)   \hspace{3.5cm}(b)  \hspace{3.5cm}(c)
  \caption {An example for Remark 7}\label{fig3-7}
\end{figure}

\begin{theorem}\label{thm3-6}
 Let $G^C$ be a  4-base-module,  $f$  a module-coloring of $G^C$, and $W_4=u$-$u_1u_2u_3u'_2u_1$  a contractible 4-wheel with respect to $f$ such that $f(u_2)=f(u'_2)$, where  $C_4^1=u_1u_2u_3u'_2u_1$ is the cycle of $W_4$.  Then, the resulting graph, denoted by $H^C$, obtained from $G^C$ by implementing a C4WO on $W_4$ ($u_2$ and $u'_2$ are contracted vertices)  is a 4-base-module.
\end{theorem}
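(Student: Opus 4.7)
The plan is to certify that $H^C$ is a 4-base-module by exhibiting an outer SMPG $H_1^C$ with $H_1^C\cap H^C = C$ such that $H:=H^C\cup H_1^C$ is a UBCMPG with respect to $C$. Since $G^C$ is a 4-base-module, fix a witnessing outer SMPG $G_1^C$ so that $G:=G^C\cup G_1^C$ is a UBCMPG with respect to $C$, and let $\bar f$ be a UBC-coloring of $G$ extending the given module-coloring $f$. Take $H_1^C:=G_1^C$. The hypothesis $|\{u_2,u'_2\}\cap V(C)|\le 1$ ensures that C4WO modifies only the interior of $G^C$ (at most renaming $u_2$ to the contracted vertex $w$ when $u_2\in V(C)$), so the outer cycle $C$ is unchanged. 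Because $f(u_2)=f(u'_2)$, the C4WO is consistent with $\bar f$ and yields a proper 4-coloring $\bar f^{H}$ of $H$ with $\bar f^{H}(w):=f(u_2)$. The task then reduces to showing that $C$ is a UB-cycle of $\bar f^{H}$ in $H$; by Theorem \ref{thm2.1}, this amounts to proving that every bichromatic cycle $C'\in C^2(F^{\bar f^{H}}(H))$ with $\bar f^{H}(C')\ne \bar f^{H}(C)$ is nonintersecting with $C$.

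The central tool will be a \emph{Kempe-lifting lemma}: for every $h\in F^{\bar f^{H}}(H)$, there exists a $g\in F^{\bar f}(G)$ whose C4WO-image is $h$. I would prove it by induction on the length of the $\sigma$-chain from $\bar f^{H}$ to $h$. Given a step $h_{k+1}=\sigma(h_k,\tilde C)$ with $h_k$ lifted to $g_k$, I construct $g_{k+1}$ by case analysis on the position of $w$ relative to $\tilde C$ and, when $w\in V(\tilde C)$, on whether the two neighbors of $w$ on $\tilde C$ lie on the same side of the former 4-wheel (both in $N_G(u_2)$ or both in $N_G(u'_2)$) or on opposite sides. In the generic cases, the $\sigma$-operation in $H$ transports directly to an analogous $\sigma$-operation in $G$, jointly swapping the colors of $u_2,u'_2$ and (if its color is in the swap pair) of $u$. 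The \emph{cross subcase} --- two neighbors of $w$ on opposite sides of the former 4-wheel --- is resolved by replacing $w$ on $\tilde C$ with a detour $u_2\text{-}x\text{-}u'_2$ through a common neighbor $x\in\{u_1,u_3,u\}$ whose color is the second color $j$ of $\tilde C$. When $g_k(u_1)\ne g_k(u_3)$, the three vertices $u_1,u_3,u$ carry all colors from $\{1,2,3,4\}\setminus\{g_k(u_2)\}$, so such $x$ exists. When $g_k(u_1)=g_k(u_3)$ and $j$ happens to be the color not carried by any of $u_1,u_3,u$, I first apply a preparatory $\sigma$-operation in $G$ on the bichromatic $4$-cycle $u_1u_2u_3u'_2$, whose interior is exactly $\{u\}$; this flips $g_k(u)$ to the missing color $j$. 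Crucially, this prep $\sigma$ is invisible under C4WO, so it does not disturb the correspondence with $h$.

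With the lifting lemma in hand, assume for contradiction that some $C'\in C^2(F^{\bar f^{H}}(H))$ with $\bar f^{H}(C')\ne \bar f^{H}(C)$ intersects $C$ in $H$. Lift the witnessing coloring $h$ to $g\in F^{\bar f}(G)$ and transport $C'$ to a bichromatic cycle $C''$ of $g$ in $G$ via the above case analysis. Since $C$ is untouched by the C4WO, the intersection pattern of $C''$ with $C$ in $G$ mirrors that of $C'$ with $C$ in $H$, so $C''$ intersects $C$ with $g(C'')\ne g(C)$, contradicting Theorem \ref{thm2.1} applied to the UBCMPG $G$. Hence $C$ is a UB-cycle of $\bar f^{H}$ in $H$, so $H$ is a UBCMPG with respect to $C$, and $H^C$ is a 4-base-module. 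The main obstacle is the cross subcase of the lifting lemma: one must simultaneously manage the availability of a detour color in the 4-wheel (including the preparatory $\sigma$ when required) and the preservation of the intersection pattern with $C$, which rests on a careful planar-embedding bookkeeping relating the two sides of $w$ in $H$ to the two sides of the 4-cycle $u_1u_2u_3u'_2$ in $G$.
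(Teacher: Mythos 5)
Your argument follows the same setup as the paper's proof --- take a witnessing UBCMPG $G=G^C\cup G_1^C$, perform the C4WO to get $H=H^C\cup G_1^C$, restrict the UBC-coloring to a coloring $g$ of $H$, and show $C$ remains a UB-cycle in $H$ via the nonintersection criterion of Theorem~\ref{thm2.1} --- but then the two proofs part ways. The paper disposes of the key step with the single line ``for every $g_0\in F^g(H)$, $|g_0(C)|=2$, since $C^2(g)\subseteq C^2(f)$,'' which is not really a proof: the claimed containment is not even literally true once a bichromatic cycle of $g$ passes through the contracted vertex $w$ (such a cycle is not a subgraph of $G$), and in any case $C^2(g)$ speaks only about $g$, not about the whole class $F^g(H)$. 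Your Kempe-lifting lemma is exactly the substance that closes this gap: it pushes every $\sigma$-chain in $H$ up to a $\sigma$-chain in $G$, so the UB-cycle property of $C$ under $\bar f$ in $G$ descends to $g$ in $H$. The case analysis has the right shape: in the generic and same-side cases the lift is direct, and since $u_2,u'_2$ sit on the lifted cycle rather than in the recolored side, their colors stay equal (so the image under C4WO really is $h_{k+1}$); in the cross case the detour through one of $u_1,u_3,u$ --- preceded if necessary by the preparatory $\sigma$ on the interior-$\{u\}$ $4$-cycle $u_1u_2u_3u'_2u_1$ --- produces a bichromatic lift, and the prep $\sigma$ recolors only $u$, which vanishes under C4WO, so it is indeed invisible. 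A small simplification: the edges $wu_1$ and $wu_3$ of $H$ lift ambiguously (each may go to $u_2u_1$ or $u'_2u_1$, etc.), so whenever one arc of $\tilde C$ at $w$ lands on $u_1$ or $u_3$ you can always choose the lift that places both arc-ends on the same side of $u_1u_2u_3u'_2u_1$, eliminating an apparent cross case. The remaining embedding bookkeeping you flag is genuine but localized to the closed disk bounded by $u_1u_2u_3u'_2u_1$, hence mechanical. In short, your proposal is correct in outline and materially more rigorous than the paper's one-line justification, which only asserts the lifting you set out to prove.
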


\begin{proof}
Since $G^C$ is a  4-base-module, there exists another 4-base-module $G_1^C$ such that $G=G^C\cup G_1^C$ is a UBCMPG with respect to $C$. Denote also by $f$ the extended coloring of $f$ to $G$. Then, $f$ is a UBC-coloring of $G$ with respect to $C$. Therefore, for any $f_0\in F^{f}(G), C\in C^2(f_0)$.

Let $H$ be the resulting graph obtained from $G$ by implementing a C4WO on $W_4$ ($u_2$ and $u'_2$ are contracted vertices).  Observe that  $f(u_2)=f(u'_2)$. We let $g$ be  the  coloring of $f$ restricted to $H$, i.e., $g(v)=f(v)$ for every $v\in V(H)\cap V(G)$ and $g(u')=f(u'_2)$ where $u'$ is the new vertex obtained by contracting $u_2$ and $u'_2$ (see Figure \ref{fig3-6} (b)). Then, for every 4-coloring  $g_0\in F^g(H)$, $|g_0(C)|=2$,  since $C^2(g)\subseteq C^2(f)$. Therefore, $H$ is a UBCMPG with respect to $C$ and  hence $H^C$ is a 4-base-module. \qed
\end{proof}

{\bf Remark 7} Note that the reverse of Theorem \ref{thm3-6} is not always true. For example, the graph $H$ shown in Figure \ref{fig3-7} (a) is a 4-base-module. We conduct E4WO for $H$ on the path of length 2, $P$ (marked by the solid bold lines in the graph) and denote by $H'$ the resulting graph (see Figure \ref{fig3-7} (b)). Observe that $H'$ contains cross-coloring (see Figure \ref{fig3-7} (c)). Therefore, $H'$ is not a 4-base-module.

\section{Decycle Theorem}\label{sec5}

Let $G^{C_4}$ be a 4-base-module and $f$ be a module-coloring of $G^{C_4}$, where $C_4=v_1v_2v_3v_4v_1$,  $f(v_1)=f(v_3)=1$, $f(v_2)=f(v_4)=2$, and $P_{2i}^f(v_2,v_4)\neq \emptyset$ for $i=3,4$. If there exists a 4-coloring $f^*\in C_4^0(G^{C_4})$ such that $f^*(v_2)\neq f^*(v_4)$, then we call $f^*$ a \emph{decycle coloring} of $G^{C_4}$. We use $F^*(G^{C_4})$ to denote the set of decycle colorings of $G^{C_4}$. If $F^*(G^{C_4})\neq \emptyset$, then
$G^{C_4}$ is said to be  \emph{decyclizable}.

\subsection{Decycle coloring of recursive 4-base-modules}

\begin{theorem}\label{thm4.1}
Let $G^{C_4}$ be a 4-base-module, $f$ a module-coloring of $G^{C_4}$, and $W_4$ a contractible 4-wheel with respect to $f$. If the graph  $H^{C_4}$ obtained from $G^{C_4}$ by a C4WO on $W_4=u-u_1u_2u_3u'_2u_1$ is decyclizable,  then $G^{C_4}$ is decyclizable, where the contracted vertices are $u_2$ and $u'_2$.
\end{theorem}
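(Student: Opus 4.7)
The plan is to lift a decycle coloring of $H^{C_4}$ back to a decycle coloring of $G^{C_4}$ by reversing the C4WO. Recall that $H^{C_4}$ is obtained from $G^{C_4}$ by deleting the wheel center $u$ and identifying $u_2$ with $u'_2$ into a single vertex $w$; reversing this operation splits $w$ back into $u_2,u'_2$ (restoring their original neighborhoods in $G^{C_4}$) and reinserts $u$ adjacent to $u_1,u_2,u_3,u'_2$. Since $|\{u_2,u'_2\}\cap V(C_4)|\leq 1$, the outer cycle $C_4$ of $H^{C_4}$ corresponds naturally to that of $G^{C_4}$, with at most one of $v_2,v_4$ being identified with $w$, so the decycle condition is meaningful on both graphs.

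Using the hypothesis, take any $h^*\in F^*(H^{C_4})$ and define $f^*\colon V(G^{C_4})\to\{1,2,3,4\}$ by putting $f^*(v)=h^*(v)$ for $v\in V(G^{C_4})\setminus\{u,u_2,u'_2\}$, $f^*(u_2)=f^*(u'_2)=h^*(w)$, and $f^*(u)=c$ for any color $c\in\{1,2,3,4\}\setminus\{h^*(u_1),h^*(u_3),h^*(w)\}$; such a $c$ exists because the set of forbidden colors has at most three elements. To verify that $f^*$ is a proper 4-coloring of $G^{C_4}$, note that edges not touching $\{u,u_2,u'_2\}$ are handled directly by properness of $h^*$, edges of the form $u_2x$ or $u'_2x$ with $x\neq u$ correspond to the edge $wx$ of $H^{C_4}$ and so satisfy $h^*(x)\neq h^*(w)$, edges incident to $u$ are handled by the choice of $c$, and finally the edge $u_2u'_2$ is absent from $G^{C_4}$ because the existence of the module-coloring $f$ with $f(u_2)=f(u'_2)$ would otherwise contradict its being proper.

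It remains to check that $f^*(v_2)\neq f^*(v_4)$. Because $|\{u_2,u'_2\}\cap V(C_4)|\leq 1$, at most one of $v_2,v_4$ is identified with $w$ in $H^{C_4}$; writing $v_i'$ for the vertex of $H^{C_4}$ corresponding to $v_i$ (either $v_i$ itself or $w$), the construction gives $f^*(v_i)=h^*(v_i')$, and the decycle property of $h^*$ yields $h^*(v_2')\neq h^*(v_4')$. Hence $f^*\in F^*(G^{C_4})$, so $G^{C_4}$ is decyclizable. The only subtlety in the argument is the bookkeeping around the contracted vertices $u_2,u'_2$ when one of them coincides with a corner of $C_4$; once the non-adjacency of $u_2$ and $u'_2$ in $G^{C_4}$ (forced by the contractibility of $W_4$) is noted, the rest is essentially automatic.
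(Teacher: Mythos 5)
Your proposal is correct and follows essentially the same approach as the paper: lift the decycle coloring from $H^{C_4}$ to $G^{C_4}$ by splitting the merged vertex, copying its color to both $u_2$ and $u'_2$, and assigning the wheel center a color outside $\{h^*(u_1),h^*(u_3),h^*(w)\}$. Your write-up is somewhat more careful than the paper's one-line justification, explicitly verifying properness (including why $u_2u'_2\notin E(G^{C_4})$) and the bookkeeping when one of $u_2,u'_2$ lies on $C_4$, but the core argument is identical.
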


\begin{proof}
Let $C_4=v_1v_2v_3v_4v_1$, and without loss of generality, we assume that $f(v_1)=f(v_3)=1, f(v_2)=f(v_4)=2$, and $P_{2i}^f(v_2,v_4)\neq \emptyset$ for $i=3,4$.
By Theorem \ref{thm3-6},  $H^{C_4}$ is a 4-base-module. Let $L$ be the path of length 2 in $H^{C_4}$ obtained by contracting $W_4$. Then, $G^{C_4}$ can be obtained by conducting an E4WO on $L$. If $H^{C_4}$ is decyclizable, then there exists a 4-coloring $f'\in C_4^0(H^{C_4})$ such that $f'(v_2)\neq f'(v_4)$. Let $f^*$ be the extended coloring of $f'$ to $G^{C_4}$ by letting $f^*(u_2)=f^*(u'_2)=f'(u_2^1)$ and $f^*(u)=c$, where $u_2^1$ is the new vertex obtained by contracting $u_2$ and $u'_2$, and $c\in \{1,2,3,4\}\setminus \{f'(u_1),f'(u_3), f'(u_2^1)\}$. Obviously, $f^*(v_2)\neq f^*(v_4)$. This proves that $G^{C_4}$ is decyclizable. \qed
\end{proof}

Let $G^{C_4}$ be a 4-base-module containing a contractible 4-wheel with respect to a module-coloring of $G^{C_4}$. Observe that the graph $G_1^{C_4}$ obtained from  $G^{C_4}$ by contracting the contractible 4-wheel is also a 4-base-module. If  $G_1^{C_4}$  contains a contractible $k$-wheel under some module-coloring of $G_1^{C_4}$, $k\in\{2,3,4\}$, then we can obtain a 4-base-module $G_2^{C_4}$ by conducting a C$k$WO on the contractible $k$-wheel of $G_1^{C_4}$. In this way, by continuously conducting  C$k$WOs   on contractible $k$-wheels, $G^{C_4}$ can be contracted into a  4-base-module $G_{\ell}^{C_4}$ ($\ell\geq 1$) which contains no contractible $k$-wheel. If $G_{\ell}^{C_4}$ is isomorphic to $B^4$, then we call $G^{C_4}$ a \emph{recursive 4-base-module}.

\begin{figure}[H]
  \centering
  \includegraphics[width=9cm]{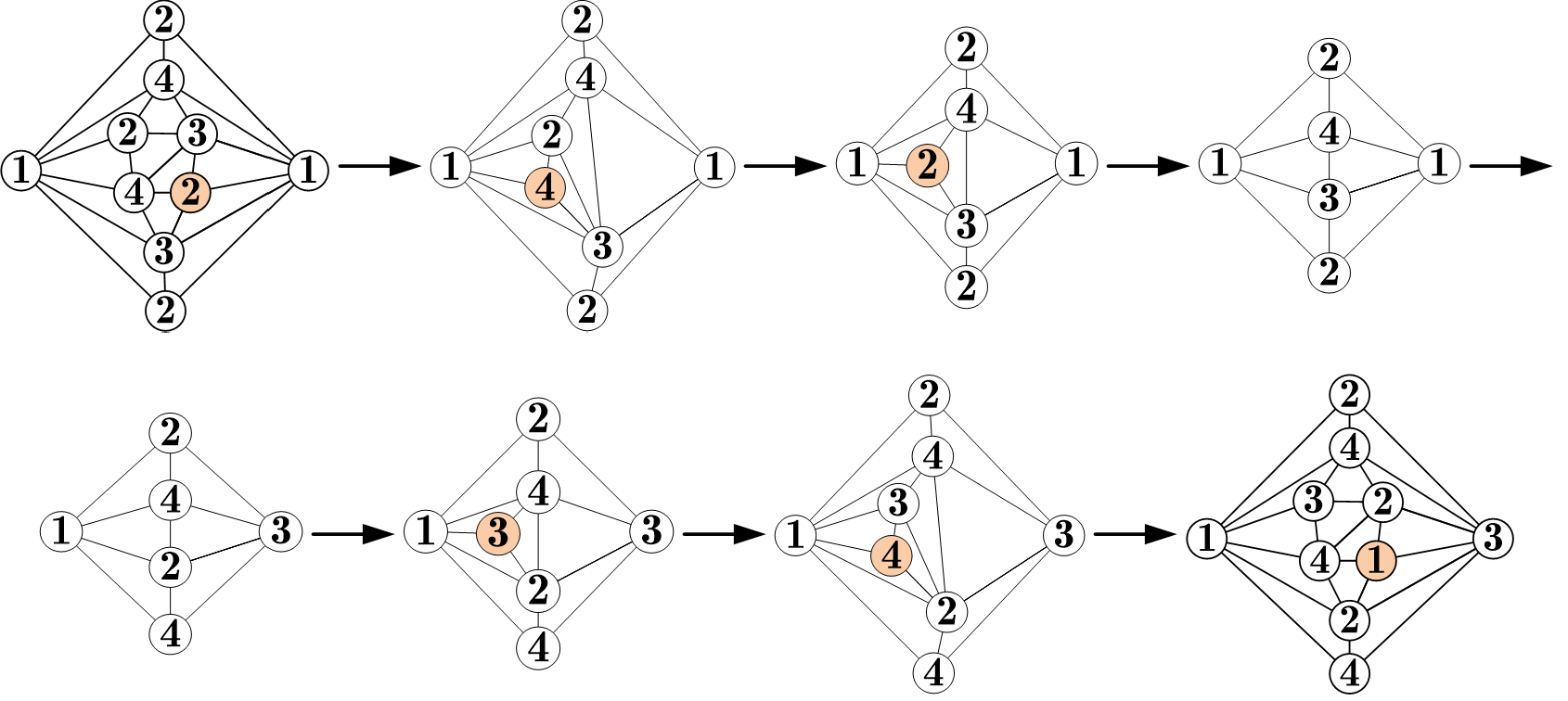}\\
  \caption {Illustration of the decycle process of a recursive 4-base-module}\label{newfig4-1}
\end{figure}

 \begin{figure}[H]
  \centering
  \includegraphics[width=11cm]{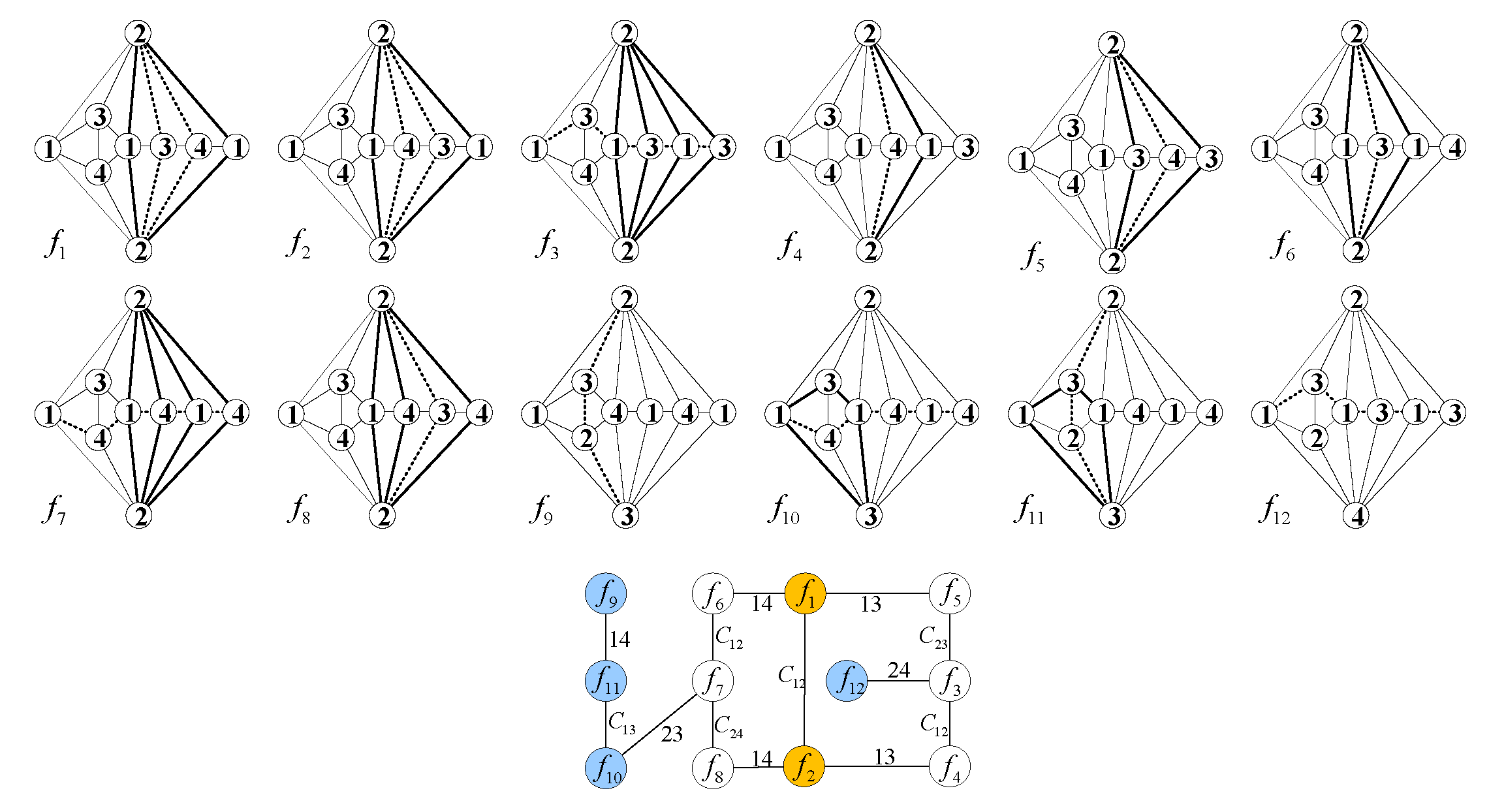}\\
  \caption {A Kempe 4-base-module and its $\sigma$-reconfiguration graph, where the symbol $ij$ labeled at edges means to interchange the colors $i,j$ on an $ij$-component, $ij\in \{13,14,23,24\}$}\label{fig4-3}
\end{figure}

Note that $B^4$ is decyclizable. Therefore, by Theorem \ref{thm4.1}, recursive 4-base-modules are decyclizable. Figure \ref{newfig4-1} illustrates the decycle process of a recursive 4-base-module.

\begin{corollary}\label{cor4.2}
Recursive 4-base-modules are decyclizable.
\end{corollary}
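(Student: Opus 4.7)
The plan is to prove the corollary by induction on the length $\ell$ of a contraction sequence
$G^{C_4} = G_0^{C_4} \to G_1^{C_4} \to \cdots \to G_\ell^{C_4} \cong B^4$
that realizes $G^{C_4}$ as a recursive 4-base-module, where each arrow is a C$k_i$WO with $k_i \in \{2,3,4\}$ performed on a contractible $k_i$-wheel relative to some module-coloring of the source graph. The base case $\ell = 0$ reduces to the observation, stated in the excerpt just above the corollary, that $B^4$ itself is decyclizable; a witness is visible by inspection of Figure~\ref{newfig2-1}(d).

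For the inductive step, assume the claim holds for all recursive 4-base-modules whose reduction sequences have length less than $\ell \geq 1$. Then $G_1^{C_4}$, which is a 4-base-module (by Theorem~\ref{thm3-6} when $k_1=4$, and by analogous elementary arguments for the C2WO/C3WO cases), admits a reduction sequence of length $\ell - 1$, so the induction hypothesis furnishes a decycle coloring $f'$ of $G_1^{C_4}$ with $f'(v_2) \neq f'(v_4)$. I then lift $f'$ to a 4-coloring $f^*$ of $G^{C_4}$ according to the type of the first contraction. If $k_1 = 4$, the lifting is precisely the content of Theorem~\ref{thm4.1}. If $k_1 = 3$, the reverse operation E3WO re-inserts a wheel center $x$ inside a triangular face whose three bounding vertices already carry three distinct colors under $f'$, so assigning $x$ the unique remaining color completes the extension. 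If $k_1 = 2$, the reverse E2WO inserts a degree-2 vertex $x$ between two adjacent vertices $u, v$ with $f'(u) \neq f'(v)$, leaving two admissible colors for $x$. In each case $f^*$ agrees with $f'$ on $V(C_4)$, so $f^*(v_2) \neq f^*(v_4)$ and $G^{C_4}$ is decyclizable.

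The core obstruction in this induction is the $k_1 = 4$ step, because the two contracted vertices of a 4-wheel are forced to take the same color and a naive extension can introduce a bichromatic cycle through the wheel cycle or disturb the 4-base-module structure; this is exactly what Theorem~\ref{thm4.1} is designed to handle, so once Theorem~\ref{thm4.1} is in hand the overall argument collapses into a routine induction. A secondary bookkeeping concern is verifying that the C2WO and C3WO steps likewise preserve the 4-base-module property, but this is implicit in the definition of a recursive 4-base-module and is straightforward because those operations simply delete (respectively, insert) a vertex of very small degree without affecting $C_4$ or the Kempe-equivalence structure relevant to module-colorings.
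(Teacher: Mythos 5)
Your proof is essentially the same as the paper's. The paper's entire argument is the one-liner ``Note that $B^4$ is decyclizable. Therefore, by Theorem~\ref{thm4.1}, recursive 4-base-modules are decyclizable,'' which is precisely the induction on the length of the contraction sequence that you spell out, with Theorem~\ref{thm4.1} supplying the inductive step. Your treatment of the $k_1\in\{2,3\}$ cases is a harmless elaboration: since a 4-base-module sits inside a UBCMPG with minimum degree at least~4, its internal vertices all have degree at least~4, so a contractible 2-wheel or 3-wheel cannot actually occur and the definition of a recursive 4-base-module only ever invokes C4WO in practice; you are therefore proving more than is needed, but nothing in your argument is wrong, and your identification of the C4WO step as the only genuine obstruction (handled by Theorem~\ref{thm4.1}) is exactly the paper's point.
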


 \subsection{Kempe and non-Kempe 4-base-modules}

Let $G^{C_4}$ be a 4-base-module. If there are $f\in F_2(G^{C_4})$ and $f^*\in F^*(G^{C_4})$  such that $f^*\in F^{f}(G^{C_4})$ (i.e. $f$ and $f^*$ are in the same component of $\sigma$-reconfiguration graph of $G^{C_4}$), then we call $G^{C_4}$ a \emph{Kempe 4-base-module}; otherwise a \emph{non-Kempe 4-base-module}. For instance, the graph  shown in Figure \ref{fig4-3} is a Kempe 4-base-module. This graph  has totally twelve distinct 4-colorings, including two module-colorings ($f_1, f_2$) and four decycle colorings ($f_9,f_{10},f_{11},f_{12}$). Since the  $\sigma$-reconfiguration graph of this graph is connected, every decycle coloring can be obtained from an arbitrary given 4-coloring.  Note that we omit some edges in the $\sigma$-reconfiguration graph  shown in Figure \ref{fig4-3}, under the condition that its connectivity does not get mangled, where the symbol $ij$ labeled at some edges means to interchange the colors $i,j$ on an $ij$-component, $ij\in \{13,14,23,24\}$.

\begin{figure}[H]
  \centering
  \includegraphics[width=12cm]{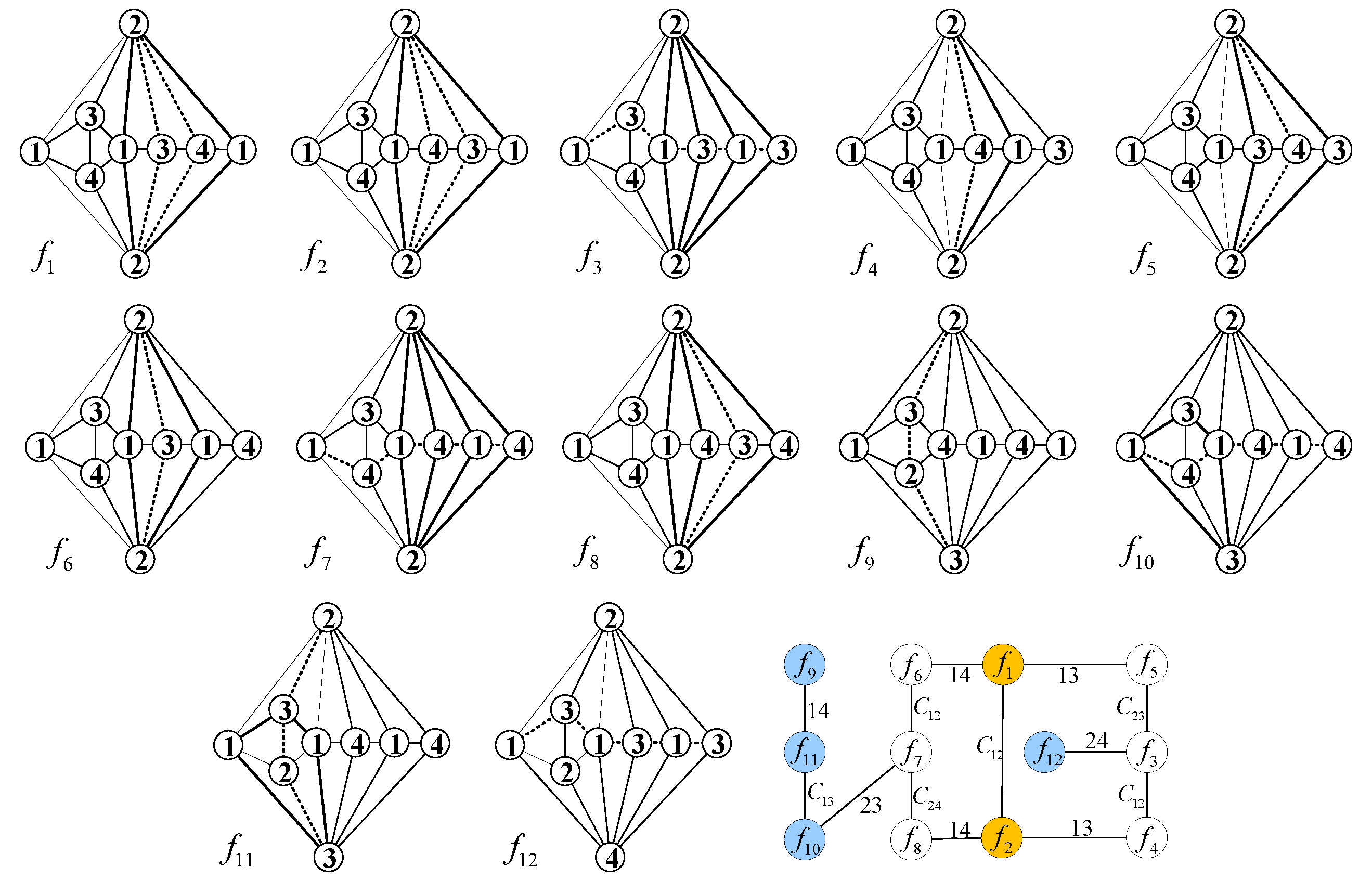}\\
  \caption {A non-Kempe 4-base-module and its $\sigma$-reconfiguration graph}\label{fig4-4}
\end{figure}

As for non-Kempe 4-base-module, we see the graph shown in Figure \ref{fig4-4}.  It has twenty-four distinct 4-colorings, including four module-colorings ($f_1,f_3,f_5,f_{11}$) and twelve decycle colorings ($f_{13}\sim f_{24}$).  Observe that the $\sigma$-reconfiguration graph of the graph has two components: one contains all of the four module-colorings; the other contains only decycle colorings.

\subsection{Module-cycle and non-module-cycle 4-base-modules}

Let $G^{C_4}$ be a 4-base-module with $\delta\geq 4$, where $C_4=v_1v_2v_3v_4v_1$. Suppose that there exists a module-coloring $f$ of $G^{C_4}$  which contains a unique $2i$-module path $\ell^{2i}$, where $i\in \{3,4\}$.  Let $j\in \{3,4\}\setminus \{i\}$. For any two vertices $x_1, x_2\in V(\ell^{2i})$ such that $f(x_1)=f(x_2)=k (\in \{2,i\})$, if $x_1$ and $x_2$ are connected by  a $kj$-path and $1k$-path of $f$ which lies in the two sides (left and right) of $\ell^{2i}$  (see Figure \ref{final4-4} (a)), then we  carry out a $K$-change for the $1j$-component of $f$ that contains vertex $v_1$ and obtain a new 4-coloring $f_1$ of $G^{C_4}$ (see Figure \ref{final4-4} (b)). Clearly, $f_1$ has a $1k$-cycle (or a $kj$-cycle), denoted by $C_{1k}$ (or $C_{kj}$), in the interior of which there exist vertices of $\ell^{2i}$. We then call $C_{1k}$ (or $C_{kj}$) a \emph{module-cycle} or \emph{$1k$-module-cycle} (or \emph{$kj$-module-cycle}) if the resulting 4-coloring (say $f'_1$), obtained from $f_1$ by conducting a $K$-change in the interior of $C_{1k}$ (or $C_{kj}$), contains no $2i$-module-path from $v_2$ to $v_4$, i.e., $f'_1$ contains a $1j$-path from $v_1$ to $v_3$ (see Figure \ref{final4-4} (c)); otherwise, $C_{1k}$ (or $C_{kj}$) is called a \emph{non-module-cycle}. If there exist $k$ and $j$ mentioned above such that $C_{1k}$ or $C_{kj}$ is  a module-cycle, then we say that $\ell^{2i}$ \emph{is  incident to a module-cycle}; otherwise, we say that $\ell^{2i}$ \emph{is not incident to a module-cycle}. If $\ell^{2i}$, for some $i\in \{3,4\}$,  is incident to a module-cycle, then $G^{C_4}$ is called a \emph{module-cycle 4-base-module}; otherwise, a \emph{non-module-cycle 4-base-module}.

 \begin{figure}[H]
  \centering
  \includegraphics[width=8cm]{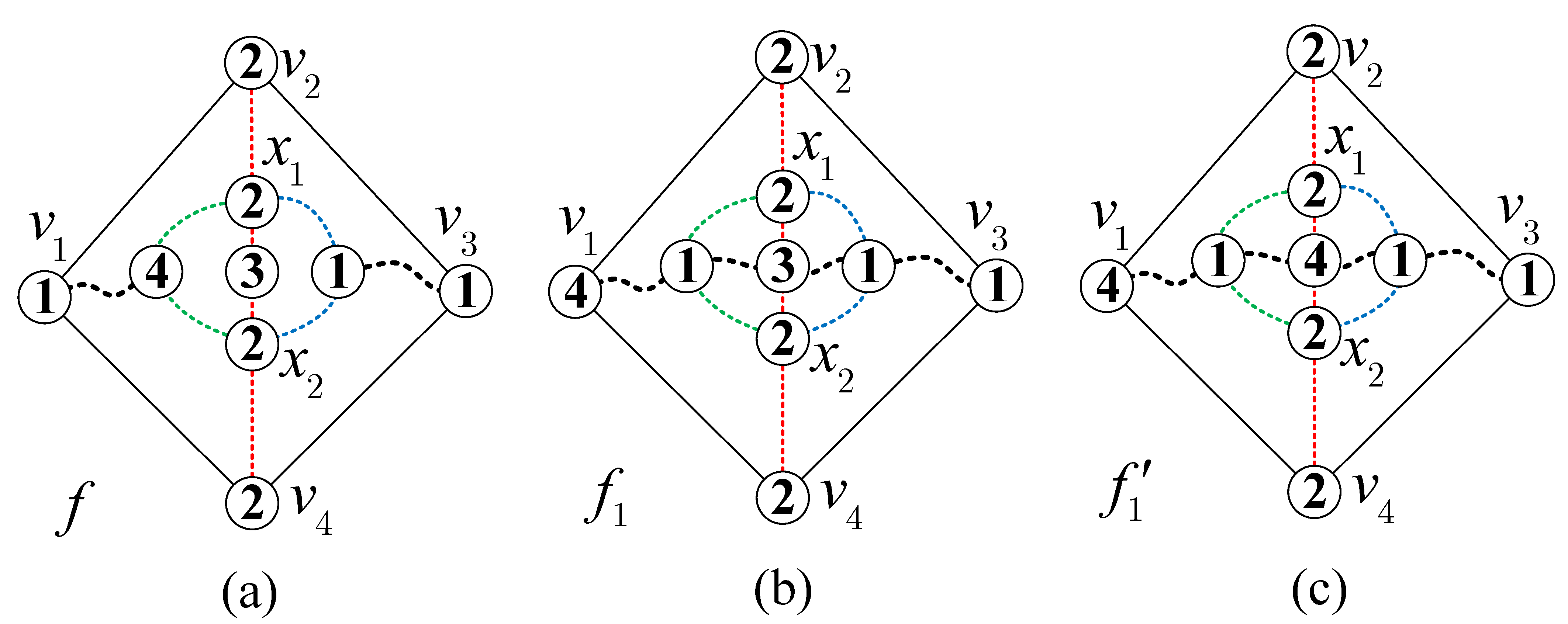}\\
  \caption {Diagram illustration for module-cycle}\label{final4-4}
\end{figure}

\begin{theorem} \label{thm-mcycle}
Let $G^{C_4}$ be a 4-base-module, where $C_4=v_1v_2v_3v_4v_1$. If $G^{C_4}$ has a module-coloring $f$ which contains a unique $ij$-module-path $\ell^{ij}$ for some $i\in \{1,2\}$ and $j\in \{3,4\}$ and $\ell^{ij}$ contains a vertex of degree 4, then $G^{C_4}$ is decyclizable.
\end{theorem}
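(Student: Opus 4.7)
The plan is to use the degree-4 vertex $v$ on $\ell^{ij}$ to break $\ell^{ij}$ by a local recolouring, after at most one preparatory Kempe change, and then apply a Kempe change to the $ij$-component of $v_i$ to separate the colours of $v_2$ and $v_4$. By Remark~5 and symmetry, I may take $\ell^{ij}=\ell^{23}$ and $f(v)=2$; assuming $v$ is interior to the triangulated region $G^{C_4}$, its four neighbours form a $4$-cycle $w_1w_2w_3w_4w_1$ with $w_1,w_3$ the path-neighbours on $\ell^{23}$ (so $f(w_1)=f(w_3)=3$), and $f(w_2),f(w_4)\in\{1,4\}$ since each side vertex is adjacent to $v$ and to both of $w_1,w_3$.

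\emph{Easy case $f(w_2)=f(w_4)$.} Let $c$ be the unique colour in $\{1,4\}\setminus\{f(w_2)\}$; then $c\notin\{3,f(w_2)\}=f(N(v))$, so recolouring $v\mapsto c$ yields a proper $f'\in C_4^0(G^{C_4})$. The $23$-subgraph of $f'$ is obtained from that of $f$ by deleting $v$ and its incident $23$-edges, so by uniqueness of $\ell^{23}$ we obtain $P_{23}^{f'}(v_2,v_4)=\varnothing$; a Kempe change on the $23$-component of $f'$ containing $v_2$ then turns $f'(v_2)=2$ into $3$ while leaving $f'(v_4)=2$, producing the desired $f^*\in F^*(G^{C_4})$.

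\emph{Reduction $f(w_2)\neq f(w_4)$.} Say $f(w_2)=1$ and $f(w_4)=4$. The key claim is that $w_2$ and $w_4$ lie in distinct $14$-components of $f$. To see this, note that $\ell^{23}$ together with the arc $v_2v_1v_4$ (respectively $v_2v_3v_4$) of $C_4$ bounds a closed sub-disk $D_1$ (respectively $D_2$) inside $C_4$, with $D_1\cap D_2=\ell^{23}$; the cyclic order of $v$'s neighbours in the planar embedding forces $w_2$ and $w_4$ into opposite sub-disks. Any $w_2$-$w_4$ path in the planar graph $G^{C_4}$ must therefore share a vertex with the common boundary $\ell^{23}$---the only $C_4$-vertices outside $\ell^{23}$ are $v_1$ and $v_3$, each of which lies in a single sub-disk together with all its neighbours---but every vertex of $\ell^{23}$ is coloured $2$ or $3$, contradicting the path being a $14$-path. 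Hence a Kempe change on the $14$-component of $f$ containing $w_2$ alone yields $f_1$ with $f_1(w_2)=f_1(w_4)=4$; since this change affects only colours $1$ and $4$, $\ell^{23}$ remains the unique $23$-module-path of $f_1$, so the easy case applied to $f_1$ completes the argument.

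\emph{Main obstacle.} The crux of the argument is the planarity separation claim for $w_2$ and $w_4$, which relies on both the cyclic order of $v$'s neighbours around $v$ and on the absence of any ``escape route'' through a $C_4$-vertex outside $\ell^{23}$ for a $14$-path. A second subtlety is the boundary case $v\in\{v_2,v_4\}$: there $v$ is not the centre of a genuine $4$-wheel, and a direct recolouring of $v$ is blocked because its neighbours already saturate $\{1,3,4\}$; this case appears to require a separate treatment, for instance by passing the argument to the unique interior neighbour of $v$ that lies on $\ell^{23}$ or by first performing a Kempe change along $\ell^{23}$ itself.
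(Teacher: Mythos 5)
Your approach is essentially the paper's: neutralise the degree-$4$ vertex on $\ell^{ij}$ by a local recolouring (after at most one Kempe change to equalise its two side neighbours), which breaks the unique module-path, and then a Kempe change on the $ij$-component of $v_2$ separates the colours of $v_2$ and $v_4$. The one substantive difference is the colour of the chosen degree-$4$ vertex: you take $f(v)=2$, while the paper takes $f(u)=3$. Since $\ell^{23}$ begins and ends in colour $2$ and alternates, every colour-$3$ vertex on it is interior; the paper's choice therefore never meets the boundary case $v\in\{v_2,v_4\}$ that you correctly flag as unresolved at the end of your proposal. If you instead choose the degree-$4$ vertex to have colour $j$ rather than colour $i$, your argument and the paper's coincide and the boundary difficulty disappears (the only question is whether a degree-$4$ vertex of colour $j$ on $\ell^{ij}$ is always available, a point the paper itself does not justify). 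Your planarity-separation argument that $w_2$ and $w_4$ lie in distinct $14$-components, forcing the preparatory Kempe change to act on only one of them, is correct and in fact makes explicit a claim the paper leaves implicit; the paper packages the same local recolouring of $u$ as a $\sigma$-operation in the interior of the bichromatic $4$-cycle induced by $N(u)$, which is equivalent to your direct recolouring since $u$ is the unique interior vertex of that cycle.
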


 \begin{figure}[H]
  \centering
  \includegraphics[width=10cm]{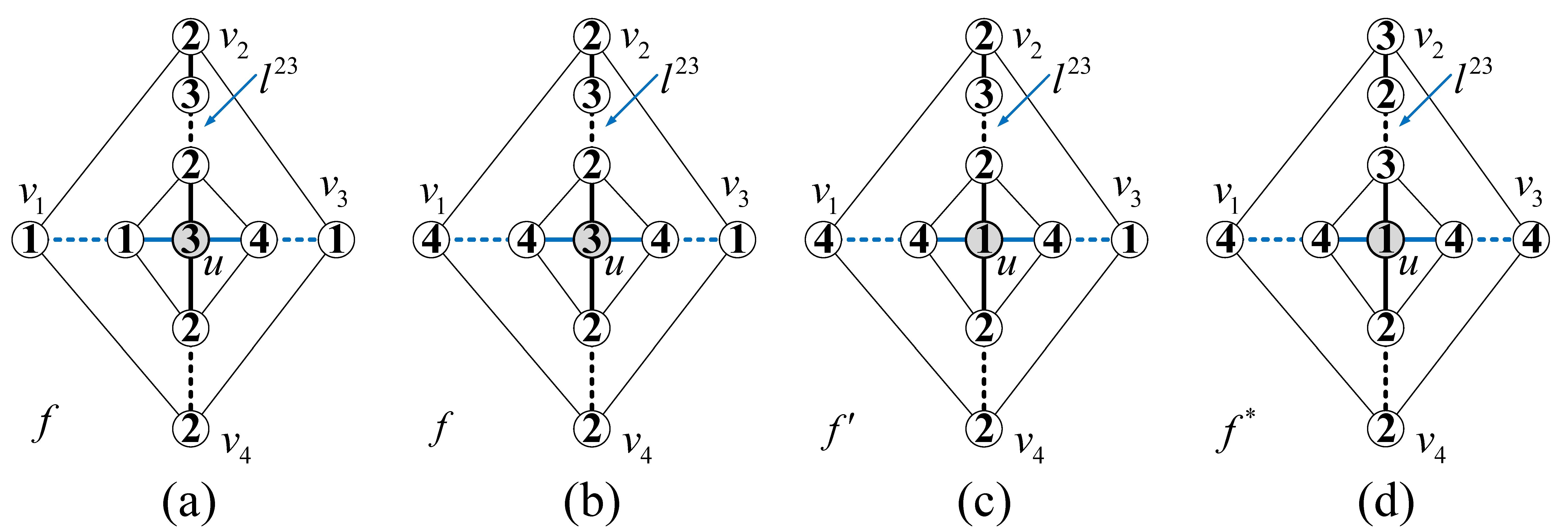}\\
  \caption {Illustration for the proof of Theorem \ref{thm-mcycle}}\label{final4-5}
\end{figure}

\begin{proof}
Without loss of generality, let $f(v_1)=f(v_3)=1,f(v_2)=f(v_4)=2,$ $\ell^{ij}\in P^{f}_{23}(v_2,v_4)$, i.e., $i=2,j=3$. Let $u\in V(\ell^{23})$ be a vertex of degree 4 and $f(u)=3$, and $N_{G^{C_4}}(u)=\{u_1,u_2,u_3,u_4\}$, where $u_1,u_3\in \ell^{23}$. Clearly, $f(u_2)\in \{1,4\}$ and $f(u_4)\in \{1,4\}$. It is enough to consider the case that $f(u_2)=f(u_4)$, since if $f(u_2)\neq f(u_4)$, then we can conduct a $K$-change for the 14-component (of $f$) containing $u_2$ and obtain a new 4-coloring (still denoted by $f$) under which $u_2$ and $u_4$ are colored with the same color; see the first two graphs in Figure \ref{final4-5}. Then, the subgraph induced by $N_{G^{C_4}}(u)$ is a bichromatic cycle $C_{12}$ or $C_{24}$. Now, under $f$, we conduct a $K$-change in the interior of $C_{12}$ or $C_{24}$ and denote by $f'$ the resulting 4-coloring. Clearly, $P^{f'}_{14}(v_1,v_3)\neq \emptyset$, which implies that $v_2$ and $v_4$ are not in the same 23-component of $f'$. Therefore, after conducting a $K$-change for the 23-component of $f'$ containing $v_2$, we obtain a 4-coloring $f^*$ such that $f^*(v_2)\neq f^*(v_4)$ and complete the proof. See Figure \ref{final4-5} for an illustration of this process. \qed
\end{proof}

As for the non-module-cycle, we see Figure \ref{final4-6} (a). Let $f$ be the 4-coloring shown in Figure \ref{final4-6} (a). We first conduct a $K$-change for the 13-component of $f$ containing $v_3$ and denote by $f_1$ the resulting 4-coloring; see Figure \ref{final4-6} (b). Observe that $f_1$ has a 23-cycle $C_{23}$. Then, under $f_1$, we conduct a $K$-change for the 14-component of $f_1$ in the interior of $C_{23}$ and obtain a new 4-coloring $f'_1$. Since $f'_1$ has a 23-module-path, $C_{23}$ is a non-module-cycle.

 \begin{figure}[H]
  \centering
  \includegraphics[width=12cm]{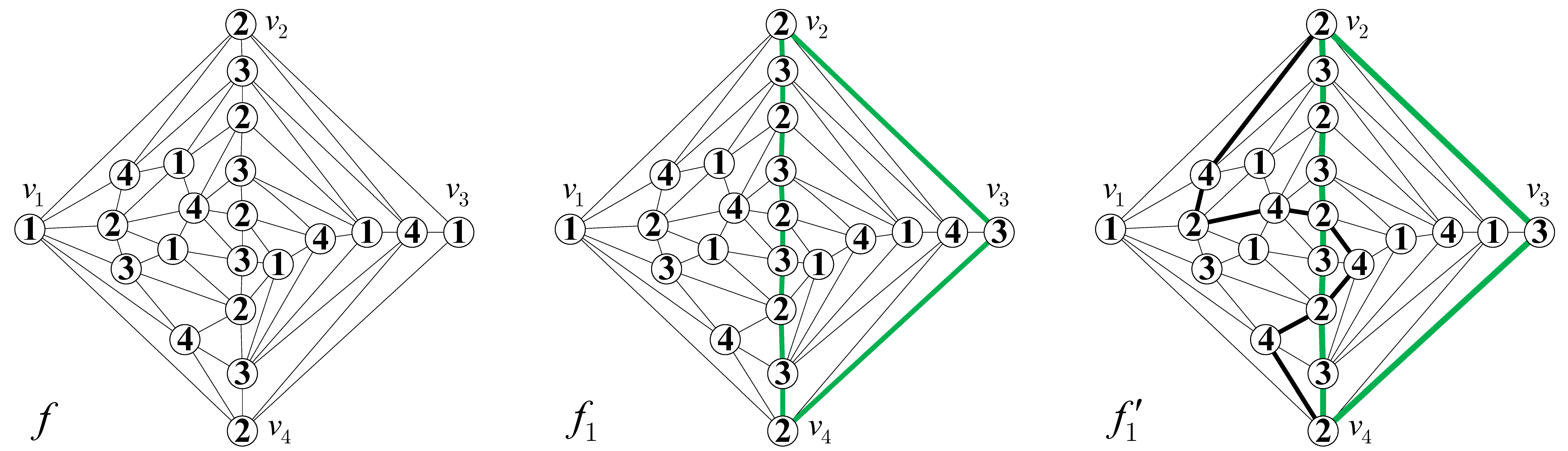}\\
  (a) \hspace{3.5cm} (b) \hspace{4cm} (c)
  \caption {Diagram illustration for non-module-cycle}\label{final4-6}
\end{figure}

\textbf{Remark 8} More generally, let $f$ be a module-coloring of $G^{C_4}$ with a module-path $\ell^{2i}$, where $i\in \{3,4\}$ and $C_4=v_1v_2v_3v_4v_1$.
Suppose that $f$ contains a bichromatic cycle $C$  that intersects with  $\ell^{2i}$. We call $C$ a module-cycle and $G^{C_4}$ a \emph{module-cycle 4-base-module}, if the following holds: conduct a $K$-change in the interior of $C$ and obtain a 4-coloring $f'_1$; if $f'_1$ contains a new bichromatic cycle,  then  conduct a $K$-change in the interior of the new bichromatic cycle and obtained a 4-coloring $f''_1$; we repeatedly carry out this process until the new 4-coloring contains no 23-module-path or 24-module-path.

The following result holds from the above analysis.

\begin{theorem}\label{thmf4-4}
Module-cycle 4-base-module contains a decycle  coloring.
\end{theorem}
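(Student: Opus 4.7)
The strategy is to use the defining recursion of the module-cycle structure to reach, by successive Kempe changes, a coloring in which neither a 23-module-path nor a 24-module-path survives, and then perform one additional Kempe change to separate the colors of $v_2$ and $v_4$.

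Fix a module-coloring $f$ of $G^{C_4}$ with $C_4=v_1v_2v_3v_4v_1$, $f(v_1)=f(v_3)=1$, $f(v_2)=f(v_4)=2$. By the characterization of 4-base-modules in Theorem \ref{thm3.4} together with the convention of Remark 5, $f$ is a shared-endpoint-coloring on $\{v_2,v_4\}$, so $P_{23}^f(v_2,v_4)$ and $P_{24}^f(v_2,v_4)$ are both nonempty while $P_{13}^f(v_1,v_3)=P_{14}^f(v_1,v_3)=\emptyset$. Let $\ell^{2i}$ be a module-path and let $C$ be the module-cycle intersecting it guaranteed by hypothesis. By the definition of module-cycle (Remark 8), iterating $K$-changes inside $C$ and then inside the newly spawned bichromatic cycles produces, after finitely many steps, a proper 4-coloring $f^{(k)}$ with $P_{23}^{f^{(k)}}(v_2,v_4)=P_{24}^{f^{(k)}}(v_2,v_4)=\emptyset$.

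A single further $K$-change now converts $f^{(k)}$ into a decycle coloring. Each Kempe change of the iterative phase acts on a bichromatic cycle whose interior lies strictly inside $C_4$, so the colors of $v_1,v_2,v_3,v_4$ are preserved; hence $C_4$ is still bichromatic under $f^{(k)}$ and $f^{(k)}\in F_2(G^{C_4})$. Applying Theorem \ref{thm3.3} to $f^{(k)}$, exactly two of the four endpoint-path sets are nonempty, and since $P_{23}^{f^{(k)}}(v_2,v_4)$ and $P_{24}^{f^{(k)}}(v_2,v_4)$ are both empty, the nonempty pair must be $P_{13}^{f^{(k)}}(v_1,v_3)$ and $P_{14}^{f^{(k)}}(v_1,v_3)$. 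In particular, $v_2$ and $v_4$ lie in different 23-components under $f^{(k)}$. A final $K$-change on the 23-component containing $v_2$ recolors $v_2$ from $2$ to $3$ while leaving $v_4$, $v_1$ and $v_3$ untouched, producing a proper 4-coloring $f^*$ with $f^*(v_2)=3\neq 2=f^*(v_4)$, i.e., a decycle coloring of $G^{C_4}$.

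The step that really needs care is the preservation of the colors of $v_1,v_2,v_3,v_4$ through the iterative $K$-change phase, since the applicability of Theorem \ref{thm3.3} at the end depends on $C_4$ still being bichromatic. Because $C_4$ is the outer cycle of the base-module, every bichromatic cycle encountered during the process lies, together with its interior, in the closed disk bounded by $C_4$, so a $K$-change restricted to that interior cannot reach a vertex of $C_4$; this localization argument is the substantive part of the proof. Once it is in place, the terminating sequence is guaranteed by the definition of module-cycle, the dichotomy is supplied by Theorem \ref{thm3.3}, and the final 23-component $K$-change is an unproblematic local operation.
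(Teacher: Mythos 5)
Your proposal is essentially correct and fills in details that the paper itself leaves implicit: the paper offers no proof of this theorem beyond the sentence ``The following result holds from the above analysis,'' where ``the above analysis'' is Remark~8, which \emph{defines} a module-cycle precisely as one for which the iterated interior $K$-changes terminate at a coloring lacking module-paths. Your argument unpacks that definition and supplies the two steps the paper takes for granted: (i) the localization observation that each $K$-change acts on the interior of a bichromatic cycle lying in the closed disk bounded by $C_4$, hence cannot recolor $v_1,\dots,v_4$, so $f^{(k)}$ remains in $F_2(G^{C_4})$; and (ii) the terminal $K$-change on the 23-component of $v_2$, which separates the colors of $v_2$ and $v_4$ because $P_{23}^{f^{(k)}}(v_2,v_4)=\emptyset$. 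Both steps are sound, and the planarity argument in (i) is exactly the substantive point you flag. Two small remarks. First, the invocation of Theorem~\ref{thm3.3} is a bit more than is needed: for the final $K$-change to work you only need \emph{one} of $P_{23}^{f^{(k)}}$ or $P_{24}^{f^{(k)}}$ to be empty, so your argument survives even under the weaker reading of Remark~8 (``until at least one of the module-path families is destroyed''). Second, note that the paper's earlier, narrower notion of module-cycle (stated before Remark~8, and used in Theorem~\ref{thm-mcycle}) begins with a $K$-change on the $1j$-component containing $v_1$, which \emph{does} move the color of $v_1$; your localization argument addresses only the Remark~8 process of interior-of-cycle changes. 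Since Theorem~\ref{thmf4-4} directly follows Remark~8 and is plainly stated for that general definition, your restriction is defensible, but it is worth being explicit that you are treating the Remark~8 notion rather than the preliminary-$K$-change variant. With that caveat the proof matches the paper's intended route.
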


\subsection{Parallel 4-base-modules}
Let $G^{C_4}$ be a 4-base-module and $f$ a module-coloring of $G^{C_4}$, where $C_4=v_1v_2v_3v_4v_1$, $f(v_1)=f(v_3)=1, f(v_2)=f(v_4)=2$, and under $f$ there exist unique 23-module-path $\ell^{23}$ and 24-module-path $\ell^{24}$ between $v_2$ and $v_4$. In this section, we consider the case that $d(v_1)=5$ and $d(v_2)=4$. Observe that every module-path  divide $G^{C_4}$ into two SMPGs. Denote by $H_1^{\ell^{23}}$ and $H_2^{\ell^{23}}$ the two SMPGs based on $\ell^{23}$, where the outer-cycle of them are $v_4v_1v_2\cup \ell^{23}$ and $v_4v_3v_2\cup \ell^{23}$, respectively. Similarly, based on $\ell^{24}$, we have $H_1^{\ell^{24}}$ and $H_2^{\ell^{24}}$.

We need to consider two cases in terms of $\ell^{23}$ and $\ell^{24}$. If exact one of $H_1^{\ell^{23}}$ and $H_2^{\ell^{23}}$ contains no vertex in $V(\ell^{24})\setminus \{v_2,v_4\}$ and the other contains all vertices of $\ell^{24}$, then   we say that $\ell^{23}$ and $\ell^{24}$ are \emph{parallel}, denoted by $\ell^{23}|| \ell^{24}$ (see  Figure \ref{newfig4-5} (a)); otherwise,  $\ell^{23}$ and $\ell^{24}$ are \emph{intersecting}, denoted by $\ell^{23}\nparallel  \ell^{24}$ (see  Figure \ref{newfig4-5} (b)).
Let $z_1=v_2, z_2, \ldots, z_t=v_4$ be the vertices in $V(\ell^{23})\cap V(\ell^{24})$. Denote by $P^i_{23}$ and $P^i_{24}$ the sub-paths of $\ell^{23}$ and $\ell^{24}$ from $z_i$ to $z_{i+1}$, respectively. Let $C_i=P^{i}_{23}\cup P^{i}_{24}$, and $H^{C_i}$ be the SMPG with outer-cycle $C_i$ (i.e., the subgraph of $G^{C_4}$ induced by vertices in $C_i$ and the interior of $C_i$), $i=1,2,\ldots, t-1$; see  Figure \ref{newfig4-5} (b). We refer to $H^{C_i}$ as a \emph{parallel block based on $\ell^{23}\nparallel  \ell^{24}$}. 
When $t=2$, $\ell^{23}$ and $\ell^{24}$ are parallel, which shows that the intersecting is a special case of the parallel.

 \begin{figure}[H]
  \centering
  \includegraphics[width=7cm]{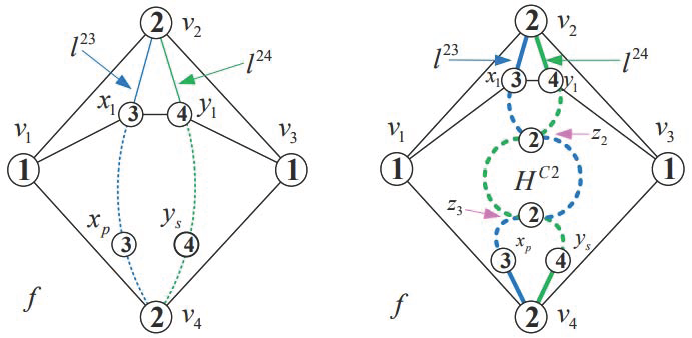}\\
  (a) \hspace{3.2cm} (b)
  \caption {Parallel module-paths and intersecting module-paths}\label{newfig4-5}
\end{figure}

\begin{theorem}\label{thm4-3}
Let $G^{C_4}$ be a 4-base-module, where $C_4=v_1v_2v_3v_4v_1$. Suppose that $G^{C_4}$ has a module-coloring $f$ under which there are parallel 23-module paths $\ell^{23}\in P^f_{23}(v_2,v_4)$ and 24-module path $\ell^{24}\in  P^f_{24}(v_2,v_4)$, where $f(v_2)=f(v_4)=2$ and $f(v_1)=f(v_3)=1$. If $d_{G^{C_4}}(v_2)=4$, then there exists a $f^*\in C_4^0(G^{C_4})$ such that $f^*(v_2)\neq f^*(v_4)$.
\end{theorem}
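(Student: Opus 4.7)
The plan is to construct a decycle colouring $f^*$ from $f$ by two Kempe K-changes and a local recolouring of $v_2$. The degree-4 hypothesis is the key: once the four neighbours of $v_2$ carry only two colours, one may reassign $v_2$ to a third colour different from $f(v_4)=2$. Structurally, $v_2$'s four neighbours are $v_1,v_3\in V(C_4)$ together with the first interior vertices $w_4\in V(\ell^{24})$ (with $f(w_4)=4$) and $w_3\in V(\ell^{23})$ (with $f(w_3)=3$); the parallelism, placing $\ell^{24}$ on the $v_1$-side of $\ell^{23}$, fixes the cyclic order around $v_2$ as $v_1,w_4,w_3,v_3$, and the three interior facial triangles at $v_2$ (namely $v_2v_1w_4$, $v_2w_4w_3$, $v_2w_3v_3$) force the edges $v_1w_4,\, w_4w_3,\, w_3v_3$ in $G^{C_4}$. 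So $v_2$'s neighbour-palette under $f$ is $\{1,3,4\}$, and the target is to reduce it to $\{1,3\}$.

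First I would perform a K-change on the 13-component $K^f_{13}(v_1)$. Since $f$ is a shared-endpoint module-colouring on $\{v_2,v_4\}$ we have $P^f_{13}(v_1,v_3)=\emptyset$; in particular $w_3$, which is adjacent to $v_3$ via the facial edge $w_3v_3$, lies in $K^f_{13}(v_3)$ and hence not in $K^f_{13}(v_1)$. The resulting colouring $f_1$ has $f_1(v_1)=3,\ f_1(w_3)=3,\ f_1(v_3)=1,\ f_1(w_4)=4,\ f_1(v_4)=2$. Second, I would perform a K-change on the 14-component $K^{f_1}_{14}(w_4)$: since $\ell^{23}$ still carries only colours $2$ and $3$ in $f_1$, no 14-chain can cross $\ell^{23}$, so $K^{f_1}_{14}(w_4)\subseteq V(H_1^{\ell^{23}})$ and in particular misses both $v_3\in V(H_2^{\ell^{23}})$ and $v_4$ (whose colour $2$ lies in no 14-chain). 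This yields $f_2$ with $f_2(w_4)=1$ and $f_2(v_1)=3,\ f_2(w_3)=3,\ f_2(v_3)=1,\ f_2(v_4)=2$ unchanged. In $f_2$ the four neighbours of $v_2$ carry only the two colours $\{1,3\}$; since the two K-changes never introduce colour $2$ at any vertex, $v_4=2$ remains properly coloured. Thus setting $f^*(v_2):=4$ and $f^*(x):=f_2(x)$ for every other $x$ gives a proper 4-colouring with $f^*(v_2)=4\neq 2=f^*(v_4)$, the desired decycle colouring.

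The main obstacle is verifying that the 13-swap in the first step leaves every vertex of $\ell^{23}$ untouched; this is exactly what licenses the clean 14-confinement argument in the second step. The 3-vertices on $\ell^{23}$ other than $w_3$ are potential troublemakers: a 3-vertex $z\neq w_3$ on $\ell^{23}$ could a priori lie in $K^f_{13}(v_1)$ via some 1-coloured $H_1$-side neighbour. Ruling this out uses the parallelism (which places $\ell^{24}$ and $v_1$ on the same side of $\ell^{23}$), the degree assumptions $d(v_1)=5$ and $d(v_2)=4$, and a short case analysis on the connectivity of such a $z$ both to $v_1$ in $H_1^{\ell^{23}}$ and to $v_3$ in $H_2^{\ell^{23}}$; together with $P^f_{13}(v_1,v_3)=\emptyset$ these force $z\in K^f_{13}(v_3)$ rather than $K^f_{13}(v_1)$. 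With this confinement established, the three steps outlined above produce the required decycle colouring and complete the proof.
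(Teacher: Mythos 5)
Your proof takes essentially the same route as the paper's, just the $3\leftrightarrow 4$ mirror image of it: the paper first K-changes $K^f_{14}(v_1)$ to obtain a $24$-cycle $\ell^{24}\cup v_4v_1v_2$, then K-changes the $13$-component inside that cycle, then recolours $v_2$ from $2$ to $3$, whereas you K-change $K^f_{13}(v_1)$, then $K^{f_1}_{14}(w_4)$, then recolour $v_2$ to $4$. Both versions are correct and rely on the same confinement fact. Your closing paragraph, however, overcomplicates that confinement and invokes an extraneous hypothesis: since every vertex of the cycle $\ell^{24}\cup v_4v_1v_2$ is coloured $2$ or $4$, that cycle planarly separates $v_1$ from every internal vertex of $\ell^{23}$, so $K^f_{13}(v_1)$ is automatically disjoint from $\ell^{23}$; no case analysis is needed, and $d(v_1)=5$ is not part of the hypotheses of this theorem (only $d_{G^{C_4}}(v_2)=4$ is assumed) and plays no role in the paper's proof either.
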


\begin{figure}[H]
  \centering
  \includegraphics[width=9cm]{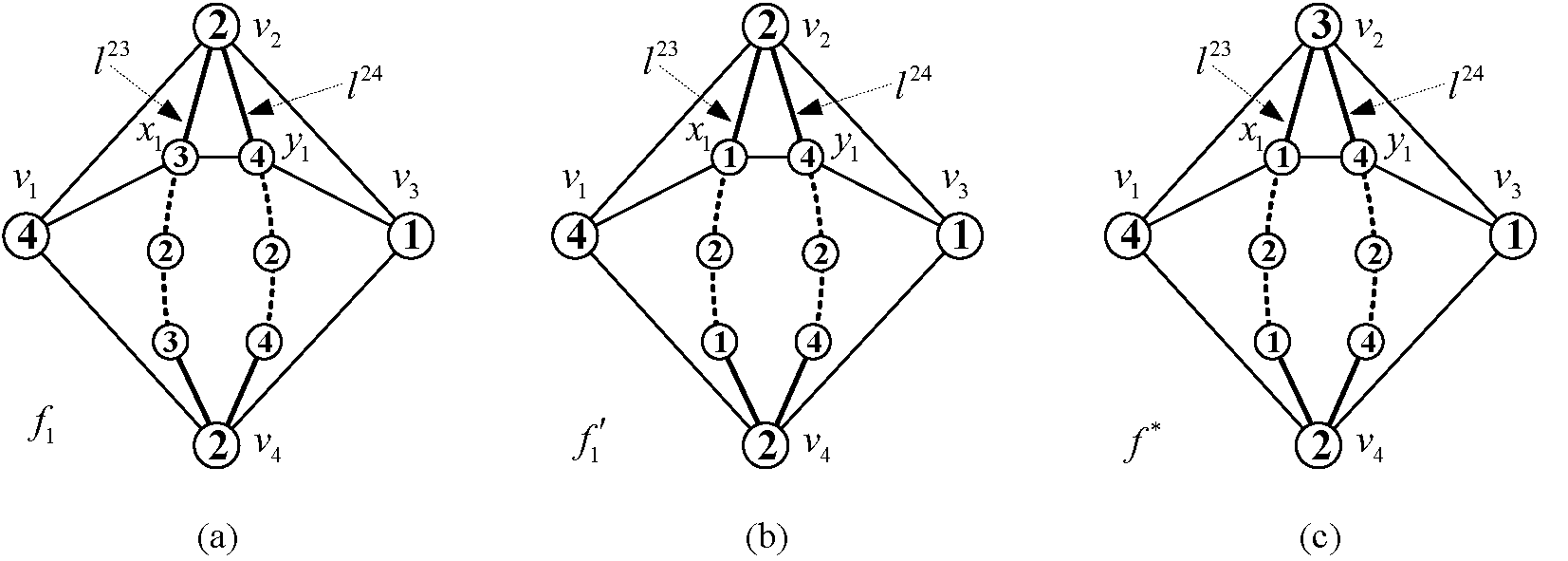}\\
  \caption {Illustration for the proof of Theorem \ref{thm4-3}}\label{figg4-5}
\end{figure}

\begin{proof}
By the definition of $\ell^{23}$ and $\ell^{24}$, $f$ contains at least two 13-components and two 14-components. Note that $G^{C_4}[N_{G^{C_4}}(v_2)]$ is a path, denoted by $P^{v_2}=v_1x_1y_1v_3$, where $f(x_1)=3$ and $f(y_1)=4$; see Figure \ref{newfig4-5} (a).

First, based on $f$, we carry out a $K$-change for the 14-component of $f$ containing $v_1$  and denote by $f_1$ the resulting 4-coloring. Clearly, $f_1$ contains a 24-cycle $C_{24}=\ell^{24}\cup v_4v_1v_2$; see Figure \ref{figg4-5} (a). Then, based on $f_1$, we carry out a $K$-change for the 13-component of $f_1$ in the interior of $C_{24}$ and denote by $f'_1$ the resulting 4-coloring. Clearly, $v_1x_1y_1v_3$ is a 14-path of $f'_1$; see Figure \ref{figg4-5} (b). Finally, based on $f'_1$, we change the color of $v_2$ from 2 to 3, and obtain our desired 4-coloring $f^*$ of $G^{C_4}$; see Figure \ref{figg4-5} (c). \qed
\end{proof}

According to Theorem \ref{thm4-3}, 4-base-modules that has two parallel module-paths are Kempe 4-base-modules.

\subsection{Module-path-related  graph and cycle-related graph}

\subsubsection{Module-path-related graph}

Let $G^{C_4}$ be a 4-base-module, where $C_4=v_1v_2v_3v_4v_1$. Let $f$ be a module-coloring of $G^{C_4}$ with a 24-module-path $\ell^{24}$ of length at least 4, where
$f(v_2)=f(v_4)=2$ and $f(v_1)=f(v_3)=1$. Let $u_1,u_2,\ldots, u_p, p\geq 3$ be the consecutive vertices in $\ell^{24}$ that are colored with 2  under $f$, where $u_1=v_2$, and $u_p=v_4$. Now, we construct a multigraph, denoted by  $H_2^f(\ell^{24})$, whose vertex set is $\{u_1,u_2,\ldots, u_p\}$, and two vertices $u_i$ and $u_j$ ($i\neq j, i,j\in \{1,2,\ldots,p\}$) are adjacent if and only if there is a 12-path (or 23-path)  from $u_i$ to $u_j$ which lies on one side of $\ell^{24}$.  More specifically, if there is a $12$-path  from $u_i$ to $u_j$ on the left (or right) side of $\ell^{24}$, then connect them by a fine solid line (or bold solid line); if there is a $23$-path  from $u_i$ to $u_j$ on the left (or right) side of $\ell^{24}$, then connect them by a fine dashed line (or bold dashed line). Here we note that when there are $k (>1)$  internal vertex disjoint 12-paths (or 23-paths)  from $u_i$ to $u_j$ which lies on one side of $\ell^{24}$, we will connect $u_i$ and $u_j$ with $k$ corresponding solid lines (dashed lines). Analogously, we can define $H_4^f(\ell^{24})$.  $H_2^f(\ell^{24})$ and $H_4^f(\ell^{24})$ are called \emph{24-module-path-related graph}, or simply \emph{$\ell^{24}$-related graph}.

In fact,  $H_4^f(\ell^{24})$  can be obtained from $H_2^f(\ell^{24})$, and vice versa. The proof is similar to that for cycle-related graph, which is shown as follows.

\begin{theorem} \label{thm4.6}
Suppose that $G^{C_4}$ is a tree-type 4-base-module,  and  $f$ is a module-coloring of $G^{C_4}$ such that $f(v_2)=f(v_4)=2$ and  $f(v_1)=f(v_3)=1$, where $C_4=v_1v_2v_3v_4v_1$. If  there are two intersecting module-paths from $v_2$ to $v_4$, 23-module-path $\ell^{23}$ and 24-module-path  $\ell^{24}$, then the following two statements hold.

(1) Conduct a $K$-change for the 13-component of $f$ that contains $v_3$ and denote by $f_1$ the resulting 4-coloring. Then, $f_1$ contains a 12-cycle or 23-cycle that intersects with a 24-module-path.

(2) Conduct a $K$-change for the 14-component of $f$ that contains $v_3$ and denote by $f_1$ the resulting 4-coloring. Then, $f_1$ contains a 12-cycle or 24-cycle that intersects with  a 23-module-path.
\end{theorem}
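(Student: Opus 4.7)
The two statements are dual under the involution that exchanges colors 3 and 4 (which swaps $\ell^{23}\leftrightarrow\ell^{24}$, 23-cycles with 24-cycles, and the 13-component of $v_3$ with the 14-component of $v_3$), so it suffices to prove (1); (2) follows by applying the same argument to the coloring $\tilde f$ obtained from $f$ by interchanging colors 3 and 4.

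For (1), let $H_{13}$ denote the 13-component of $f$ containing $v_3$ and let $f_1$ be the coloring obtained by performing the K-change on $H_{13}$. The K-change only swaps colors 1 and 3 on vertices of $H_{13}$, so every 2- or 4-colored vertex keeps its color; in particular every 24-module-path of $f$, including $\ell^{24}$, survives intact as a 24-module-path of $f_1$. Moreover $f_1(v_3)=3$, while $f_1(v_1)\in\{1,3\}$ according as $v_1\notin H_{13}$ or $v_1\in H_{13}$. I split the argument along this dichotomy.

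If $v_1\in H_{13}$, then $C_4$ itself becomes a 23-cycle in $f_1$ (colored $3,2,3,2$); since $\ell^{24}$ joins $v_2$ and $v_4$ through the interior of $C_4$, this 23-cycle intersects $\ell^{24}$. If $v_1\notin H_{13}$, I will construct the desired cycle by tracking how $\ell^{23}$ is modified. When no 3-vertex of $\ell^{23}$ lies in $H_{13}$, the path $\ell^{23}$ is unchanged in $f_1$, and $\ell^{23}$ together with the walk $v_2v_3v_4$ forms a 23-cycle in $f_1$; by the intersection hypothesis $\ell^{23}$ and $\ell^{24}$ share an interior 2-vertex, so this cycle meets $\ell^{24}$. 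When some 3-vertex of $\ell^{23}$ does lie in $H_{13}$, I choose such a vertex $s$ closest to $v_2$ along $\ell^{23}$. Then there is a 13-path $P$ in $f$ joining $v_3$ to $s$ inside $H_{13}$, which after the K-change becomes a 13-path from $v_3$ (now colored 3) to $s$ (now colored 1). Combining $P$ with the prefix $Q$ of $\ell^{23}$ from $v_2$ to $s$ (whose only color-changed vertex is $s$ itself, by the minimality of $s$) and the edge $v_2v_3$ produces a closed walk; my plan is to extract from this walk either a 23-cycle through $v_3$ or a 12-cycle through $v_2$, in each case passing through an interior vertex of $\ell^{23}$ which, by the intersection hypothesis, may be chosen to lie on $\ell^{24}$.

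The main technical obstacle is the last case: the closed walk built from $Q$, $P$, and the edge $v_2v_3$ uses three colors (1, 2, 3), so one cannot extract a bichromatic cycle from it by inspection. My plan is a careful local analysis around $s$ using the planar embedding and the minimality of $s$, followed by an appeal to the tree-type hypothesis on $G^{C_4}$ (which forbids bichromatic cycles of $f$ other than $C_4$) to rule out the degenerate possibility that the K-change produces no new bichromatic cycle at all. The intersection hypothesis on $\ell^{23}$ and $\ell^{24}$ is precisely what forces such a cycle to appear and to share a vertex with $\ell^{24}$, so the existence of a common 2-vertex of $\ell^{23}$ and $\ell^{24}$ will both force the bichromatic cycle into existence and furnish the required intersection point with a 24-module-path.
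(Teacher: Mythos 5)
Your proposal diverges from the paper's argument and, more importantly, leaves the hardest case unresolved. The paper proves the theorem by introducing the module-path-related multigraph $H_2^f(\ell^{24})$: its vertices are the $2$-colored vertices $u_1,\dots,u_p$ of $\ell^{24}$, and its edges record $12$-paths and $23$-paths on each side of $\ell^{24}$. Since $f$ is a tree-coloring, $(G^{C_4})_{23}^f$ is a tree and $(G^{C_4})_{12}^f$ is connected with exactly one cycle ($C_4$), so the dashed-line subgraph of $H_2^f(\ell^{24})$ is a spanning tree and the solid-line subgraph is a connected spanning subgraph with one cycle; hence $|E(H_2^f(\ell^{24}))|\ge 2p-1$. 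The $K$-change on the $13$-component containing $v_3$ interchanges the ``bold'' solid and dashed lines, and a simple counting/parity argument then forces a monochromatic cycle (all solid or all dashed) in $H_2^{f_1}(\ell^{24})$, which translates back to a $12$-cycle or $23$-cycle of $f_1$ intersecting $\ell^{24}$. This is a genuinely global counting argument; no local case analysis around a special vertex $s$ is needed.

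Your attempt has two problems. First, a small one: the subcase $v_1\in H_{13}$ is vacuous, because a module-coloring is a shared-endpoint coloring and thus $P_{13}^f(v_1,v_3)=\emptyset$; $v_1$ and $v_3$ are never in the same $13$-component, so $C_4$ can never become a $23$-cycle after this $K$-change. Second, and decisively, the case you yourself identify as ``the main technical obstacle'' --- where a $3$-vertex of $\ell^{23}$ lies in $H_{13}$ and you build a trichromatic closed walk from $Q$, $P$, and $v_2v_3$ --- is not actually proved: you describe a ``plan'' (local analysis around $s$, then invoke the tree-type hypothesis) but never extract the required $12$- or $23$-cycle from that walk, nor explain why it must intersect $\ell^{24}$. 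There is no obvious way to extract a bichromatic cycle from a closed walk using three colors, and the intersection with $\ell^{24}$ does not follow simply from $\ell^{23}$ and $\ell^{24}$ sharing a vertex. This is exactly the point where the paper's multigraph counting argument is doing real work, and your proposal lacks a substitute for it.
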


\begin{proof}
 Here we only give the proof of (1), and (2) can be proved by the same argument.
 Let $u_1,u_2,\ldots, u_p$ be the consecutive vertices in $\ell^{24}$  that are colored with 2 under $f$, where $p\geq 3, u_1=v_2,$ and $u_p=v_4$. Consider $H_2^f(\ell^{24})$. Since $G^{C_4}$ is a tree-type 4-base-module, $(G^{C_4})_{23}^f$ is a tree, and  $(G^{C_4})_{12}^f$ is connected and contains a 12-cycle. So, the subgraph of $H_2^f(\ell^{24})$ induced by solid lines is connected and contains cycle, and the subgraph of $H_2^f(\ell^{24})$ induced by dashed lines is connected. This implies that $|E(H_2^f(\ell^{24}))|\geq 2p-1$. Observe that $H_2^{f_1}(\ell^{24})$ can be  obtained from  $H_2^f(\ell^{24})$ by interchanging the bold solid lines and bold dashed lines (since $f_1$ is obtained from $f$ by conducting a $K$-change for the 13-component on the right side of $\ell^{24}$). Therefore,   $H_2^{f_1}(\ell^{24})$ must contain a cycle that is composed by either bold solid lines and fine solid lines, or bold dashed lines and fine dashed lines. \qed
\end{proof}

As an illustration of Theorem \ref{thm4.6}, we see that the 4-base-module $G^{C_4}$ shown in Figure \ref{figg4-6} (a), where  $f$ is the   module-coloring of $G^{C_4}$ and $f$ is a tree-coloring. Under $f$, $\ell^{24}$ contains five vertices $u_1, \ldots, u_5$ that are colored with 2. Since $(G^{C_4})_{23}^f$ is a tree, the subgraph of $H_2^f(\ell^{24})$ induced by dashed lines is a tree. Since $(G^{C_4})_{12}^f$ is connected and contains a 12-cycle, the subgraph of $H_2^f(\ell^{24})$ induced by solid lines is connected. Therefore, $H_2^{f}(\ell^{24})$  contains a cycle that consists of bold solid lines and fine solid lines (see Figure \ref{figg4-6} (b)). In $H_2^{f}(\ell^{24})$, if we interchange the bold solid lines and bold dashed lines, then we obtain graph $H_2^{f_1}(\ell^{24})$ (where $f_1$ is obtained from $f$ by conducting a $K$-change for the 13-component one the right side of $\ell^{24}$). Clearly, $H_2^{f_1}(\ell^{24})$ contains a cycle that consists of bold dashed lines and fine dashed lines (see Figure \ref{figg4-6} (c)), which corresponds to a 23-cycle of $f_1$ (see Figure \ref{figg4-6} (d)).

\begin{figure}[H]
  \centering
  \includegraphics[width=11cm]{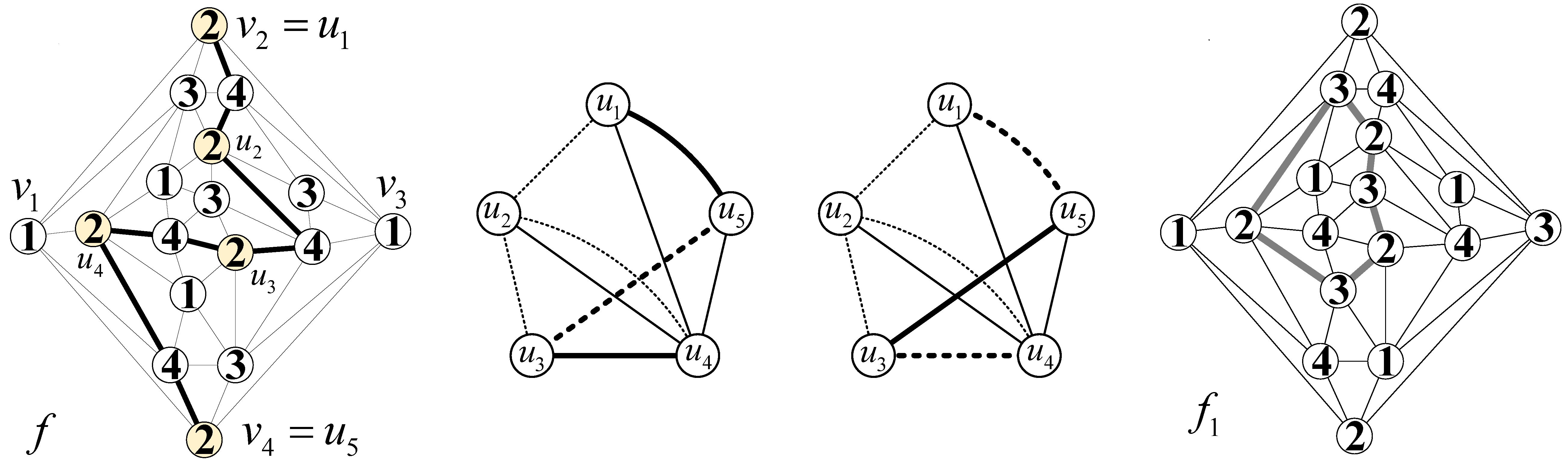}\\
  (a) \hspace{2cm}(b)\hspace{2cm}(c)\hspace{2cm}(d)
  \caption {Illustration for the proof of Theorem \ref{thm4.6}}\label{figg4-6}
\end{figure}

\subsubsection{Cycle-related graph}

Let $G^{C_4}$ be a 4-base-module, where $C_4=v_1v_2v_3v_4v_1$. Suppose that $G^{C_4}$  has a  4-coloring $f'$ (not necessarily  proper) which contains a 34-cycle $C'$ of length at least 4. Denote by $u_1,u_2,\ldots, u_p$  the consecutive vertices in $V(C')$ that are colored with 4 under $f'$, where $p\geq 2$. Now, we construct a multigraph, denoted by  $H_4^{f'}(C')$, whose vertex set is $\{u_1,u_2,\ldots, u_p\}$, and two vertices $u_i$ and $u_j$ ($i\neq j, i,j\in \{1,2,\ldots,p\}$) are adjacent if and only if there is a 14-path or 24-path  from $u_i$ to $u_j$ which lies in either the interior or exterior of $C'$.  More specifically, if there is a $14$-path  from $u_i$ to $u_j$ in the interior (or exterior)  of $C'$, then connect them by a bold solid line (or fine solid line);  if there is a $24$-path  from $u_i$ to $u_j$ in the interior (or exterior)  of $C'$, then connect them by a bold dashed line (or fine dashed line).
Analogously, we can define $H_3^{f'}(C')$ based on vertices in $V(C')$ that are colored with 3. $H_4^{f'}(C')$ and $H_3^{f'}(C')$ are called \emph{cycle-related graph of $C'$}.

To illustrate cycle-related graphs, we consider the 4-base-module $G^{C_4}$ shown in Figure \ref{fig4-11}, where  $f_{14}$ is a pseudo 4-coloring of $G^{C_4}$.   Under $f_{14}$, there are three bichromatic cycles: pseudo 14-cycle, 23-cycle, and 34-cycle of length 8 (denoted by $C'_{34}$). We use four colors red, green, yellow, and blue to mark vertices of $C'_{34}$, and the way how to mark these vertices is as shown in Figure \ref{fig4-11}. Observe that $(G^{C_4})_{24}^{f_{14}}$ is connected and contains no cycle, $(G^{C_4})_{14}^{f_{14}}$ has two components (one is in the interior of 23-cycle, and the other contains a pseudo 14-cycle), and $C'_{34}$ contains a vertex with color 4 that is in the interior of a 23-cycle. So, in $H_4^{f_{14}}(C'_{34})$, there are in total six edges, the subgraph induced by solid lines is a triangle and an isolated vertex, and the subgraph induced by dashed lines is a tree. The structure of $H_4^{f_{14}}(C'_{34})$ is shown in Figure \ref{fig4-11}, in which there is a solid line cycle, and the subgraph induced by bold lines (resp. solid lines) is a tree.  In the following, we prove that $H_3^{f_{14}}(C'_{34})$ can be derived from  $H_4^{f_{14}}(C'_{34})$, and then give a fast derivation method.

\begin{figure}[H]
  \centering
  \includegraphics[width=10cm]{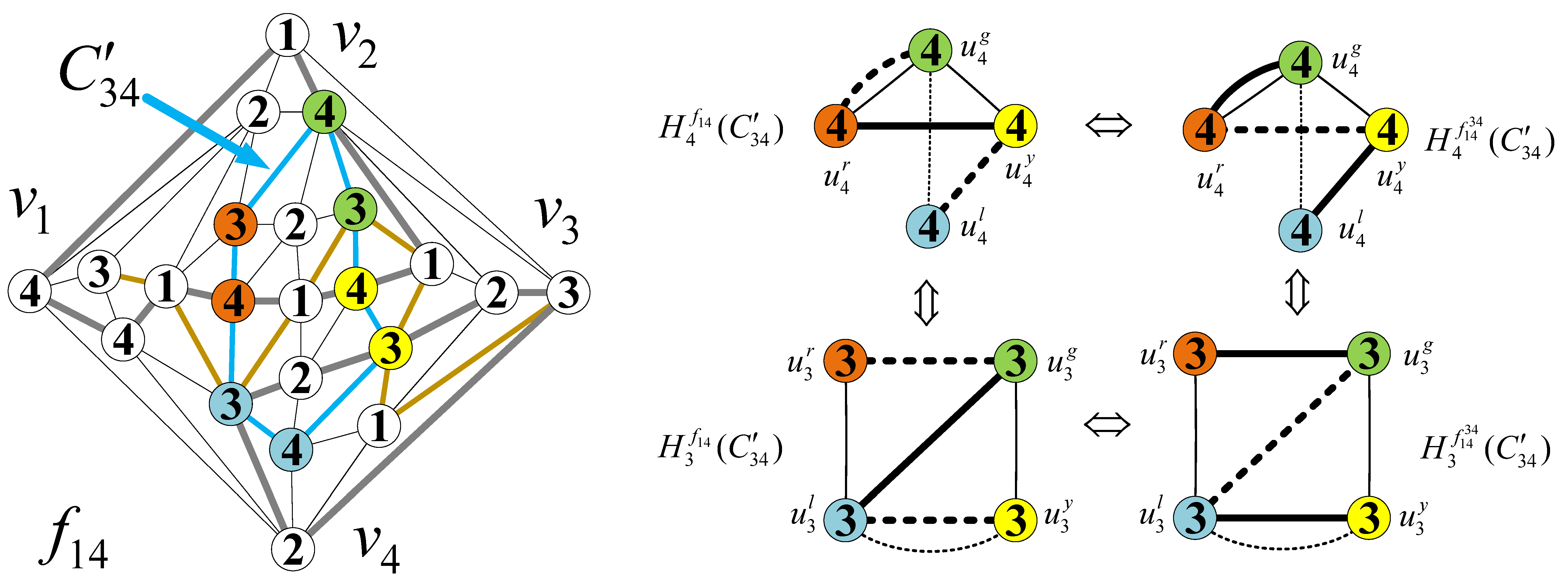}\\
  \caption {Illustration for cycle-related graph}\label{fig4-11}
\end{figure}

Determine the lines between $u_3^r$ (red vertex) and $u_3^g$ (green vertex) in $H_3^{f_{14}}(C'_{34})$. First, since in $H_4^{f_{14}}(C'_{34})$, there is a fine solid line between $u_4^r$(red vertex)  and $u^g_4$ (green vertex), there is a 14-path from  $u_4^r$ to $u^g_4$ in the exterior of $C'_{34}$. This implies that in the exterior of $C'_{34}$, no 23-path connects $u_3^r$ and $u_3^g$, i.e., in $H_3^{f_{14}}(C'_{34})$, no fine dashed line connects $u_3^r$ and $u_3^g$. Second, since in $H_4^{f_{14}}(C'_{34})$, there is a bold dashed line between $u_4^r$ and $u^g_4$,
there is a 24-path from  $u_4^r$ to $u^g_4$ in the interior of $C'_{34}$. This implies that in the interior of $C'_{34}$, no 13-path connects $u_3^r$ and $u_3^g$, i.e., in $H_3^{f_{14}}(C'_{34})$, no bold solid line connects $u_3^r$ and $u_3^g$. Third, since in $H_4^{f_{14}}(C'_{34})$, there is a fine dashed line between $u_4^g$ and $u^g_{\ell}$ (blue vertex),
there is a 24-path from  $u_4^r$ to $u^g_4$ in the exterior of $C'_{34}$. This implies that in the exterior of $C'_{34}$, no 13-path connects $u_3^r$ and $u_3^g$, i.e., in $H_3^{f_{14}}(C'_{34})$, no fine solid line connects $u_3^r$ and $u_3^g$. Based on the above analysis, we see that under $f_{14}$ there is a 23-path from $u_3^r$ to $u_3^g$  in the interior of $C'_{34}$, i.e., in $H_3^{f_{14}}(C'_{34})$,  there are only bold dashed lines between $u_3^r$ and $u_3^g$.

The above gives a method to determine the lines between $u_r^3$ and $u^g_3$ in $H_3^{f_{14}}(C'_{34})$, based on $H_4^{f_{14}}(C'_{34})$. Since under $f_{14}$ the vertex $u_4^g$  in $C'_{34}$  is adjacent to both $u_3^r$ and $u_3^g$, and in $H_4^{f_{14}}(C'_{34})$ the set of lines incident with  $u_4^g$
 includes fine solid lines, fine dashed lines, and bold dashed lines, it follows that in $H_3^{f_{14}}(C'_{34})$ there exist only bold dashed lines between $u_3^r$ and $u_3^g$.

Indeed, the above analysis gives a  method of constructing  $H_3^{f_{14}}(C'_{34})$ from $H_4^{f_{14}}(C'_{34})$.

{\bf Step 1}
In $C'_{34}$,  we first use a straight line (the gray line in Figure \ref{fig4-12}) connecting $u_i$ and $u_j$, by which  the vertices in $C'_{34}$ colored with 4 can be divided into two classes.
 Here we use the 4-base-module $G^{C_4}$  shown in Figure \ref{fig4-11} to illustrate this step. Under $f_{14}$, consider the lines between $\{u_3^r\}$ and $\{u_3^g, u_3^y\}$. In $C'_{34}$, use one straight line connecting  $u_3^r$ to $u_3^g$ (and $u_3^r$ to $u_3^y$); see  Figure \ref{fig4-12} (a) and (b). The corresponding straight lines in $H_4^{f_{14}}(C'_{34})$  are shown in the first two graphs in Figure   \ref{fig4-12} (c), and  other straight lines can be obtained  in a similar way (see the last four graphs in Figure   \ref{fig4-12} (c)).

\begin{figure}[H]
  \centering
  \includegraphics[width=10cm]{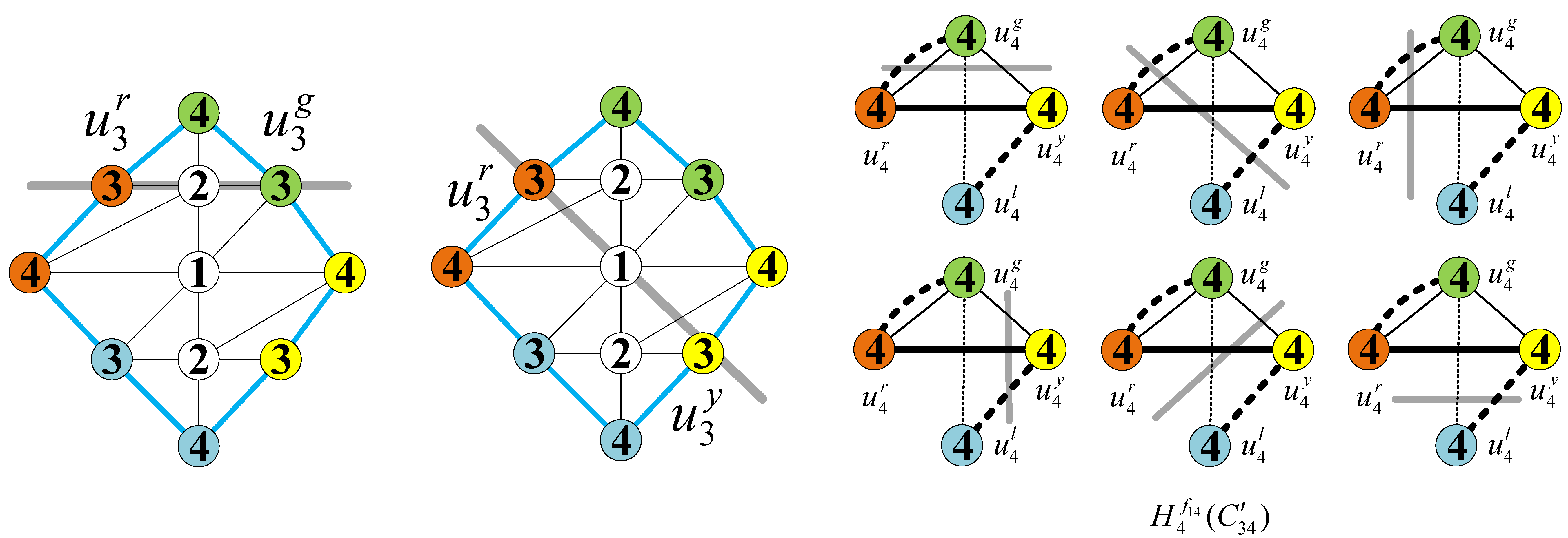}\\
  (a)\hspace{2cm}(b) \hspace{4cm} (c)
  \caption {Illustration for the construction of $H_3^{f_{14}}(C'_{34})$ from $H_4^{f_{14}}(C'_{34})$}\label{fig4-12}
\end{figure}

{\bf Step 2} Determine the set (denoted by $L$) of lines in $H_4^{f_{14}}(C'_{34})$ that are crossed by the straight lines in $H_4^{f_{14}}(C'_{34})$. Consider the first graph shown in Figure \ref{fig4-12} (c); we see that the $L$=\{fine solid lines, fine dashed lines, bold dashed lines\}.

{\bf Step 3} Determine the lines between $u_i$ and $u_j$.
\begin{itemize}
  \item  If $|L|=4$, then no line connects $u_i$ and $u_j$.
  \item If $|L|=3$, when fine dashed (or fine solid) line is not in $L$, connect $u_i$ and $u_j$ with a fine solid (or fine dashed) line; when bold dashed (or bold solid) line is not in $L$, connect $u_i$ and $u_j$ with a bold solid (or bold dashed) line.
  \item If $|L|=2$, when \{fine dashed line, fine solid line\} $\cap L =\emptyset$ (or  \{bold dashed line, bold solid line\} $\cap L =\emptyset$), connect $u_i$ and $u_j$ with  fine solid lines and fine dashed lines (or bold solid lines and bold dashed lines);  when \{fine dashed line, bold dashed line\} $\cap L =\emptyset$ (or  \{fine solid line, bold solid line\} $\cap L =\emptyset$), connect $u_i$ and $u_j$ with  fine solid lines and bold solid lines (or fine dashed lines and bold dashed lines); when \{fine dashed line, bold solid line\} $\cap L =\emptyset$ (or  \{fine solid line, bold dashed line\} $\cap L =\emptyset$), connect $u_i$ and $u_j$ with  fine solid lines and bold dashed lines (or fine dashed lines and bold solid lines).
  \item If $|L|=1$, when fine dashed (or solid) line  is  in $L$, connect $u_i$ and $u_j$ with  fine dashed (or solid) lines and bold (dashed and solid) lines; when bold dashed (or solid) line  is  in $L$, connect $u_i$ and $u_j$ with  bold dashed (or solid) lines and fine (dashed and solid) lines.
  \item If $|L|=0$, then connect $u_i$ and $u_j$ with  bold  (dashed and solid) lines and fine (dashed and solid) lines.
\end{itemize}
Based on Figure \ref{fig4-12} (c), there are six straight lines, by which we obtain the $H_3^{f_{14}}(C'_{34})$ shown in Figure \ref{fig4-11}, e.g., in $H_3^{f_{14}}(C'_{34})$, there is no line between $u_3^r$ and $u_3^y$, and a bold dashed line between $u_3^r$ and $u_3^g$.

By this way,  other lines in $H_3^{f_{14}}(C'_{34})$ can be determined similarly. So far, we prove that $H_3^{f_{14}}(C'_{34})$ can be constructed from $H_4^{f_{14}}(C'_{34})$, and give a fast construction method.

\subsection{Tree 4-base-module}\label{sec3-6}

\begin{theorem}\label{thm4-7}
 Suppose that $G^{C_4}$ is a 4-base-module with $\delta\geq 4$, $d_{G^{C_4}}(v_2)=4$, $d_{G^{C_4}}(v_1)\in \{5,6\}$, and $f$ is a module-coloring of $G^{C_4}$ for which $f(v_1)=f(v_3)=1$ and $f(v_2)=f(v_4)=2$, where $C_4=v_1v_2v_3v_4v_1$. If  $f$ is a tree-coloring and under $f$ there are two intersecting module-paths between $v_2$ and $v_4$, 23-module-path $\ell^{23}$ and 24-module-path $\ell^{24}$, then $G^{C_4}$ has a 4-coloring  $f^*\in C_4^0(G^{C_4})$ satisfying
\begin{equation}\label{equ4.1}
f^*(v_2)\neq f^*(v_4)
\end{equation}
\end{theorem}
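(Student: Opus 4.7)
The plan is to reduce Theorem \ref{thm4-7} to Theorem \ref{thmf4-4} by showing that $G^{C_4}$ is a module-cycle 4-base-module under the given hypotheses.  Since $f$ is a tree-coloring with intersecting module-paths $\ell^{23}$ and $\ell^{24}$, Theorem \ref{thm4.6}(1) applies directly: performing a $K$-change on the 13-component of $f$ containing $v_3$ yields a 4-coloring $f_1$ that contains either a 12-cycle or a 23-cycle, call it $C$, that intersects with some 24-module-path of $f_1$.  This $C$ is exactly the kind of bichromatic cycle that could qualify as a module-cycle in the sense of Section~4.3 and Remark~8.

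The heart of the argument is to verify that $C$ is indeed a module-cycle, i.e.\ that iterating the $K$-change process described in Remark~8 terminates in a 4-coloring with no 23-module-path and no 24-module-path between $v_2$ and $v_4$.  Once this termination is established, the resulting coloring $f^*$ satisfies, by Theorem \ref{thm3.3}, either $P^{f^*}_{13}(v_1,v_3)\neq\emptyset$ with $P^{f^*}_{14}(v_1,v_3)\neq\emptyset$ (no 24- or 23-module-path means both 14- and 13-endpoint-paths on the $v_1$–$v_3$ side); in particular the two vertices $v_2$ and $v_4$ must lie in distinct color classes, giving $f^*(v_2)\neq f^*(v_4)$, which is \eqref{equ4.1}.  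Equivalently, once $C$ is shown to be a module-cycle, $G^{C_4}$ is a module-cycle 4-base-module, and Theorem \ref{thmf4-4} immediately produces the desired decycle coloring.

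The main obstacle, therefore, is proving termination of the iterative $K$-change process, and this is exactly where the degree hypotheses $d(v_2)=4$ and $d(v_1)\in\{5,6\}$ come into play.  Because $d(v_2)=4$, the subgraph $G^{C_4}[N(v_2)]$ is a path $v_1x_1y_1v_3$ with $f(x_1)=3,f(y_1)=4$, so every 23-module-path leaves $v_2$ through $x_1$ and every 24-module-path leaves through $y_1$.  This pins down how module-paths can interact with the bichromatic cycles produced during the iteration.  Combined with the corresponding local constraint at $v_1$ (which has only 5 or 6 neighbors), one can invoke the module-path-related and cycle-related graph machinery of Section~4.5: each $K$-change in the interior of a bichromatic cycle intersecting a module-path corresponds to swapping a parity of bold/fine lines in the associated $H_k^{f}(\ell^{2i})$, and the strictly controlled local structure near $v_2$ and $v_1$ forces some complexity measure (for instance, the number of vertices of the current module-path that lie in the interior of successive bichromatic cycles) to decrease.

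Carrying this out, the iteration strictly shrinks at each step and must terminate at a 4-coloring with no 23- or 24-module-path from $v_2$ to $v_4$.  Hence $C$ is a module-cycle, $\ell^{24}$ is incident to a module-cycle, $G^{C_4}$ is a module-cycle 4-base-module, and Theorem \ref{thmf4-4} yields the required $f^*\in C_4^0(G^{C_4})$ with $f^*(v_2)\neq f^*(v_4)$.  A fallback, should the general termination argument prove delicate, is a finite case analysis on the local configurations at $v_2$ (uniquely determined by $d(v_2)=4$) and at $v_1$ (a handful of cases since $d(v_1)\in\{5,6\}$), explicitly exhibiting in each case a short sequence of $K$-changes starting from $f$ that arrives at a 4-coloring separating the colors of $v_2$ and $v_4$.
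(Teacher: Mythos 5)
Your proposal does not work. The central idea is to show that the cycle $C$ produced by Theorem~\ref{thm4.6}(1) is a module-cycle, and then invoke Theorem~\ref{thmf4-4}. But Theorem~\ref{thm4.6}(1) only gives a $12$-cycle or $23$-cycle \emph{intersecting} a $24$-module-path; it does not give a module-cycle, and in general $C$ is \emph{not} a module-cycle. The paper exhibits exactly this phenomenon: the discussion following Theorem~\ref{thm-mcycle} (Figure~\ref{final4-6}) constructs a $23$-cycle that intersects the module-path but is a non-module-cycle, and inside the proof of Claim~1, Case~2, the paper explicitly restricts to the situation where ``$C_{23}$ is a non-module-cycle of $f_1$ (otherwise, $G^{C_4}$ is a module-cycle 4-base-module)''. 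In other words, the whole difficulty of Theorem~\ref{thm4-7} lies precisely in the case your reduction discards: when the cycle from Theorem~\ref{thm4.6}(1) is \emph{not} a module-cycle and Theorem~\ref{thmf4-4} is unavailable. Your ``complexity measure strictly decreases'' claim is never instantiated with an actual measure, and the supposed termination of the iterative $K$-change process is asserted, not proved; nothing in the degree hypotheses $d(v_2)=4$, $d(v_1)\in\{5,6\}$ makes this self-evident.

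There is also a logical slip in your concluding step. If the iteration reached a coloring $f^*$ with $f^*(v_2)=f^*(v_4)=2$ and no $23$- or $24$-module-path, this is perfectly consistent with Theorem~\ref{thm3.3} (both $P_{13}$ and $P_{14}$ nonempty); it does not force $f^*(v_2)\neq f^*(v_4)$. What the paper actually does at this point (cf.\ the end of the proof of Theorem~\ref{thm-mcycle}) is to use the presence of a $14$-path from $v_1$ to $v_3$ to separate $v_2$ and $v_4$ into different $23$-components, and then perform one more $K$-change on the $23$-component containing $v_2$. The paper's proof of Theorem~\ref{thm4-7} does start like yours (applying the $K$-change from Theorem~\ref{thm4.6} to get $f_1$), but then bifurcates into the $23$-vertical-axis and $14$-sloped-axis cases, and in the latter introduces a pseudo-coloring $f_2$ with a pseudo $44$-edge and runs a long case analysis (good-cycles, $24$-big-cycles, quasi $13$-horizontal-axes, bridge- and semi-bridge-vertices, and the seven ``type'' claims for $d(v_1)\in\{5,6\}$). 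None of this is replaced by your appeal to Remark~8 and Theorem~\ref{thmf4-4}, and your ``fallback'' finite case analysis is mentioned but not carried out. As written, the proposal has a genuine gap.
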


\begin{proof}
Since $f$ is a tree-coloring, we have that $|P_{23}^f(v_2,v_4)|=1$ and $|P_{24}^f(v_2,v_4)|=1$, i.e., $P_{23}^f(v_2,v_4)=\{\ell^{23}\}$ and $P_{24}^f(v_2,v_4)=\{\ell^{24}\}$. Therefore, both $G_{13}^f$ and $G_{14}^f$ has two components. Let $P^{v_2}=G^{C_4}[N_{G^{C_4}}(v_2)]$. Then $P^{v_2}$ is a path of length 3. Let $P^{v_2}=v_1x_1y_1v_3$. Without loss of generality, we assume $f(x_1)=3$ and $f(y_1)=4$ (see Figure \ref{newfig4-5} (b)).

In $H_2^{\ell^{24}}$,  conduct a $K$-change for the 13-component containing $v_3$, and denote by $f_1$ the resulting coloring. Clearly, the following relations hold:
\begin{equation} \label{equ4.2}
\left\{ \begin{array}{lll}
(G^{C_4})_{12}^{f_1} & =& (H_1^{\ell^{24}})_{12}^{f} \cup (H_2^{\ell^{24}})_{23}^{f}\\
(G^{C_4})_{14}^{f_1} & =& (H_1^{\ell^{24}})_{14}^{f} \cup (H_2^{\ell^{24}})_{34}^{f}\\
(G^{C_4})_{23}^{f_1} & =& (H_1^{\ell^{24}})_{23}^{f} \cup (H_2^{\ell^{24}})_{12}^{f}\\
(G^{C_4})_{34}^{f_1} & =& (H_1^{\ell^{24}})_{34}^{f} \cup (H_2^{\ell^{24}})_{14}^{f}\\
(G^{C_4})_{13}^{f_1} & =& (G^{C_4})_{13}^{f}\\
(G^{C_4})_{24}^{f_1} & =& (G^{C_4})_{24}^{f}
\end{array}\right.
\end{equation}

For $f_1$, one of the following two cases will happen. One case is that there is a 23-path from  $z_2$ to $v_4$, and we call $f_1$  a \emph{23-vertical-axis} coloring; the other case is that there is an 14-path from  $v_1$ to $y_1$, and we call $f$  an \emph{14-sloped-axis} coloring. In what follows, we analyze these two cases in detail.

(1) $f_1$ is a 23-vertical-axis coloring

Based on Equation (\ref{equ4.2}),  a 23-vertical-axis coloring $f_1$ can be constructed as follows. Since $f_1$ is a 23-vertical-axis coloring, there is a 23-path of $f_1$ from  $z_2$ to $v_4$. So, under $f$, there are some 23-paths in $H_1^{\ell^{24}}$  and 12-paths in $H_2^{\ell^{24}}$ such that under $f_1$
the union of these paths is a 23-path from  $z_2$ to $v_4$,  denoted by $P^{z_2v_4}$.  Let $\ell_1^{23}=P_{23}^{z_2v_4} \cup P_{23}^{1}$. We call $\ell_1^{23}$ a \emph{23-vertical-axis} based on $f_1$, and  $G^{C_4}$  a \emph{vertical-axis 4-base-module}.

Note that $P_{23}^{z_2v_4}$ can be divided into two classes: one contains $v_3$ and the other does not contain $v_3$. Please see Figure \ref{fig4-13}(b) for the illustration of structures of these two classes of 23-vertical-axis under $f_1$ (marked by black bold lines).

Now, under $f_1$, conduct a $K$-change for the 14-component containing $v_1$ and denote by $f'_1$ the resulting coloring (see Figure \ref{fig4-13}(c)). Based on $f'_1$, change the color of $v_2$ from $2$ to $1$, and the resulting coloring (still denoted by $f^*$) satisfies Equation (\ref{equ4.1}).

\begin{figure}[H]
  \centering
  \includegraphics[width=10cm]{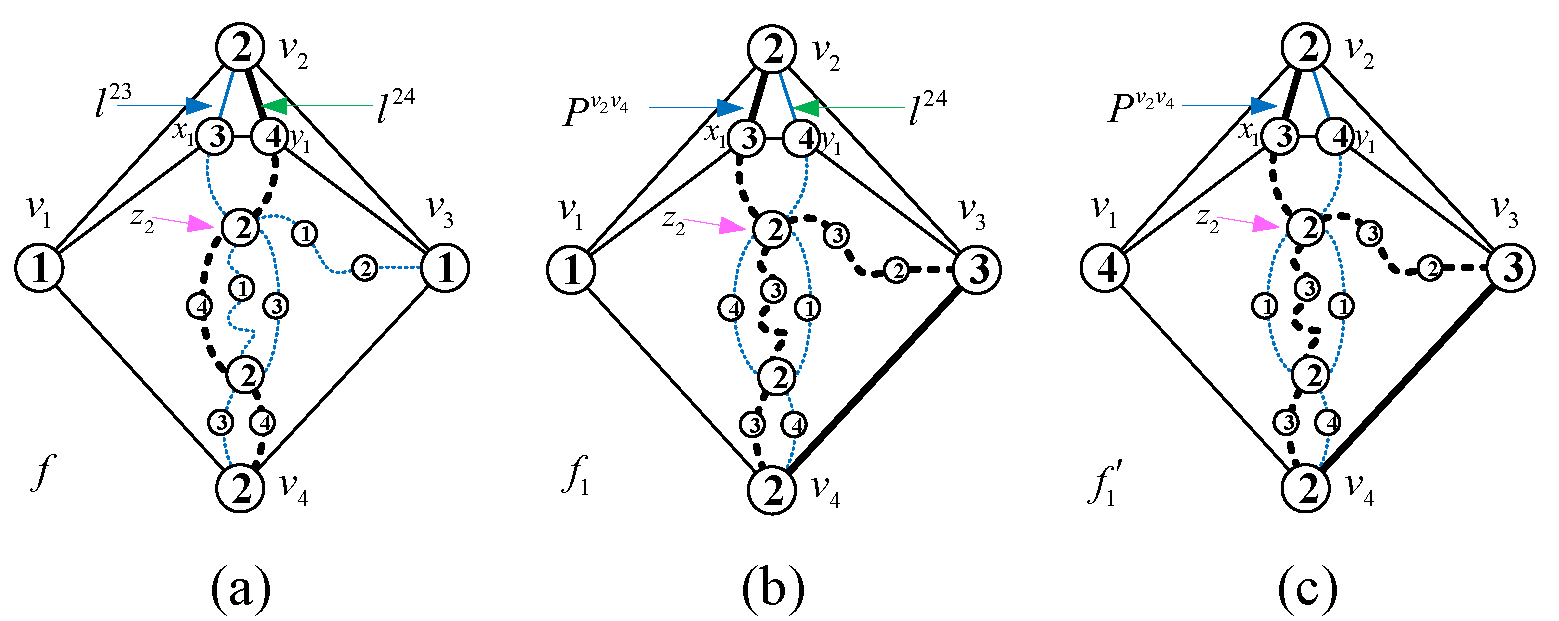}\\
  \caption {Illustration for structure of the 23-vertical-axis coloring}\label{fig4-13}
\end{figure}

\begin{figure}[H]
  \centering
  \includegraphics[width=10cm]{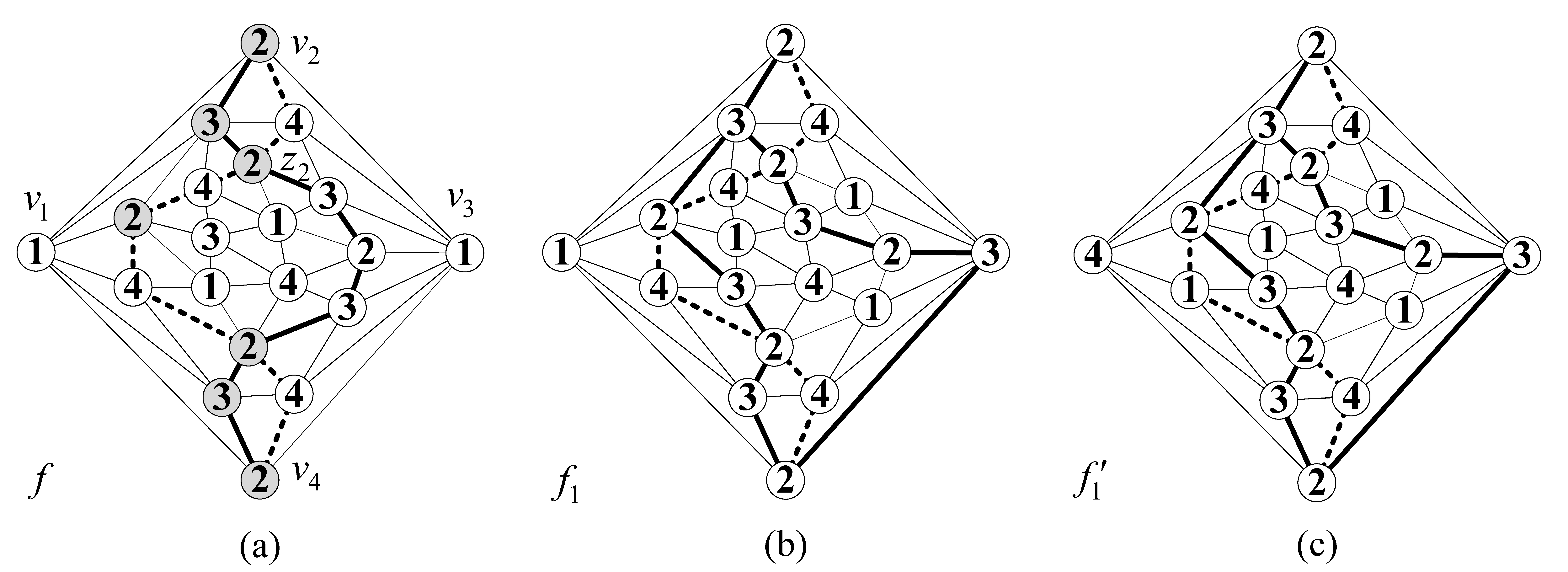}\\
  \caption {An example of 23-vertical-axis 4-base-modules}\label{fig4-14}
\end{figure}

Figure \ref{fig4-14} gives an instance of 23-vertical-axis 4-base-module. Denote by $G^{C_4}$ and $f$ the 4-base-module and the module-coloring shown in Figure \ref{fig4-14}(a), respectively. Under $f$, $H_1^{\ell^{24}}$ has two 23-components,  whose vertices are marked by grey color;  $H_1^{\ell^{24}}$ has two 12-paths,  the union of which makes up two 23-vertical-axes under $f_1$ (see the bold lines in Figure \ref{fig4-14} (b)). Under $f_1$,  conduct  a $K$-change for the 14-component containing $v_1$ and denote by $f'_1$ the resulting coloring (see Figure \ref{fig4-14}(c)). Based on $f'_1$, change the color of $v_2$ from $2$ to $1$, and  obtain the decycle coloring $f^*$.

(2) $f_1$ is a 14-sloped-axis  coloring

Since $f_1$ is an 14-sloped-axis  coloring,  $f_1$ contains an 14-path $P^{v_1y_1}$ from $v_1$ to $y_1$. So,  $H_1^{\ell^{24}}$ has one or more 14-paths, $H_2^{\ell^{24}}$ has one or more  34-paths, and the union of these paths is a path $P^{v_1y_1}$ from $v_1$ to $y_1$ (Note that under $f_1$, $P^{v_1y_1}$ is a 14-path. So, it is also denoted by $P^{v_1y_1}_{14}$). Colorings $f$ and $f_1$ are shown in Figure \ref{fig4-15} (a) and (b), respectively. Based on $f_1$, change the color of $v_1$ from 1 to 4 and change the color of $v_2$ from 2 to 1, and denote by $f_2$ the resulting pseudo 4-coloring. Under $f_2$, let $C_{14}^{f_2}=P_{14}^{v_1y_1}\cup y_1v_2v_1$. Clearly,  $C_{14}^{f_2}$ is a pseudo 14-cycle (denoted by $C_{14}$ in the following); see Figure \ref{fig4-15} (c).

\begin{figure}[H]
  \centering
  \includegraphics[width=10cm]{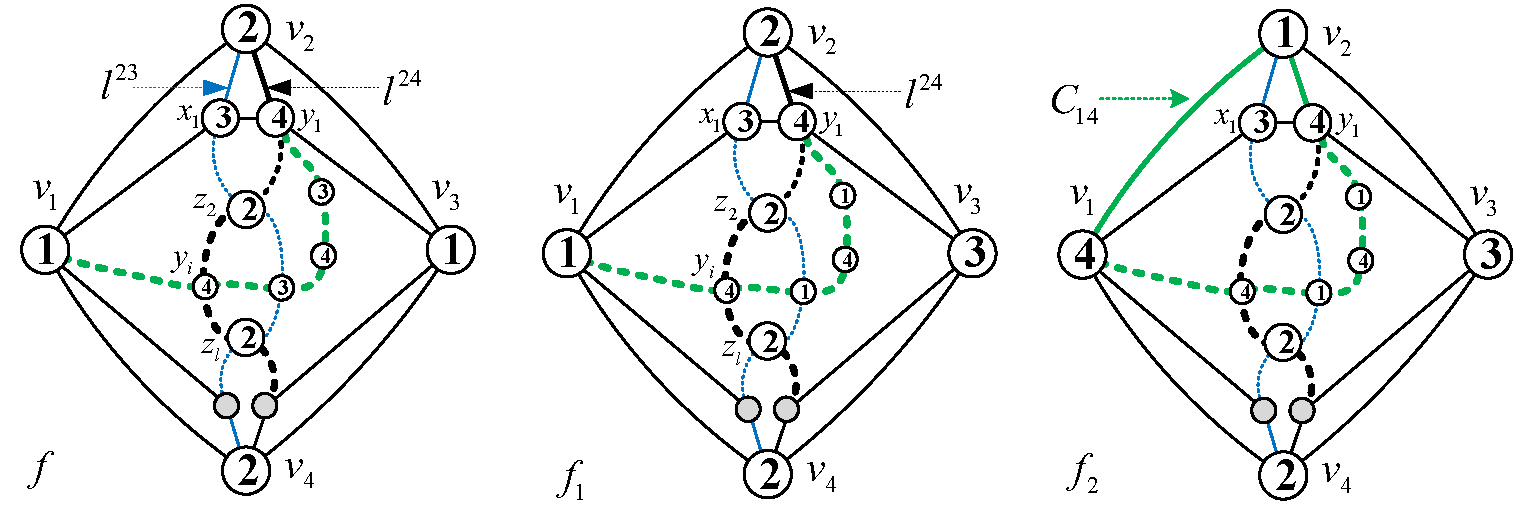}\\
  (a)\hspace{3cm}(b) \hspace{3cm} (c)
  \caption {Illustration for structure of the 14-sloped-axis coloring}\label{fig4-15}
\end{figure}

If $d_{G^{C_4}}(v_1)=5$, then under $f_2$ there is exact one pseudo 44-edge, and there are two situations in terms of $f$  by which $G^{C_4}$ can be divided into two classes correspondingly: 55-324 type and 55-343 type. Figure \ref{fig4-16} gives an illustration for these cases (under module-coloring $f$). In the following, we will prove that these two classes of 4-base-modules have decycle colorings.

\begin{figure}[H]
  \centering
  \includegraphics[width=6cm]{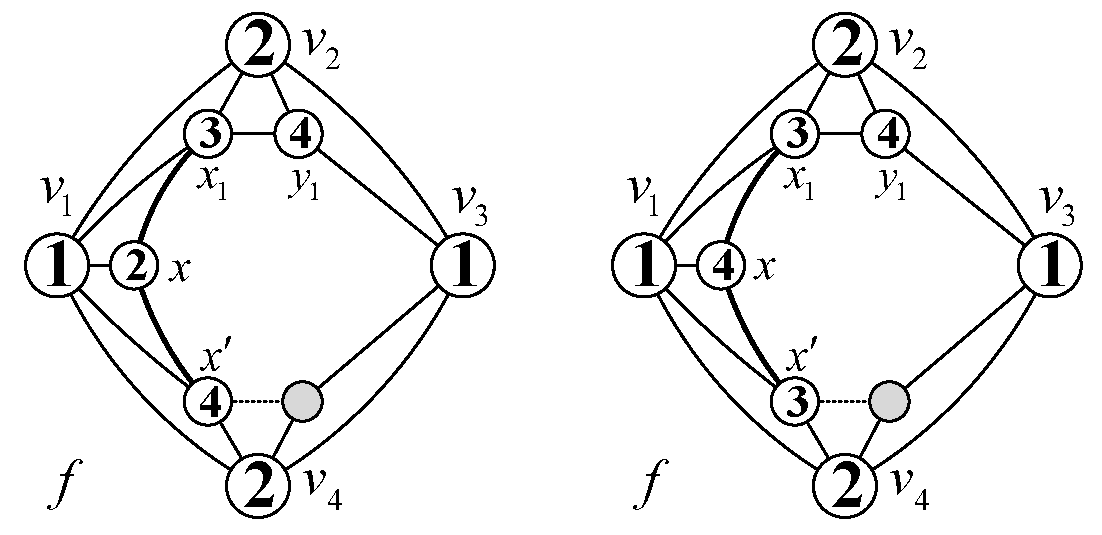}\\
  (a)55-324 type\hspace{2cm}(b) 55-343 type
  \caption {Illustration for two classes of 4-base-modules}\label{fig4-16}
\end{figure}

{\bf Claim 1.} \textbf{55-324 type 4-base-modules are decyclizable}.

{\bf Proof of Claim 1.} The graph shown in Figure \ref{fig4-16}(a) is a 55-324 type 4-base-module. The graph shown in Figure \ref{fig4-17} (a) gives its structure under $f_2$, in which there is a unique pseudo 44-edge $v_1x'$. Observe that $v_1x'\in E(C_{14})$ and $v_1x'$ is 22-type. If $(G^{C_4})_{34}^{f_2}$ contains no odd-length cycle, then $(G^{C_4})_{34}^{f_2}-v_1x'$ is disconnected. Then,
the pseudo 44-edge can be eliminated by conducting a $K$-change for  the 34-component of $(G^{C_4})_{34}^{f_2}-v_1x'$ containing $v_1$, and the proof is completed \footnote{Note that when a pseudo $ii$-edge $z_1z_2$ is not on an odd-length $ij$-cycle, we can eliminate this pseudo edge in the following way. Let $H$ be the $ij$-component containing $z_1z_2$. We conduct a $K$-change for the $ij$-component of $H-z_1$ containing $z_2$. For convenience, we refer to such a $K$-change as a \emph{pseudo $ii$-edge $K$-change for $H$}.}
 So, in what follows, we assume that under $f_2$, the pseudo 44-edge $v_1x'$ is on an odd-length 34-cycle $C_{34}$.  We further consider two cases.

{\bf Case 1.} Under $f_2$, $C_{34}$ and $C_{14}$ share a common pseudo 44-edge, and  $C_{34}$ and $C_{14}$ are nonintersecting; see Figure \ref{fig4-17} (b). For this case, we can obtain a decycle coloring by the following way. First, conduct a $K$-change for the 12-component in the interior of $C_{34}$ and denote by $f_{34}$ the resulting coloring. Since both $v_1x'$ and $v_1x_1$ are edges of $C_{34}$, we have $f_{34}(x)=1$; see Figure \ref{fig4-17} (c). Furthermore, under $f_{34}$, conduct a $K$-change for the 23-component in the interior of $C_{14}$   and denote by $f_{34}^{14}$ the resulting pseudo 4-coloring; see Figure \ref{fig4-17} (d). Observe that $f_{34}^{14}(x)=1$ and $f_{34}^{14} (x_1)=2$. Therefore, under $f_{34}^{14}$, we can obtain a decycle coloring $f^*$ by changing the color of $v_1$ from $4$ to $3$.

\begin{figure}[H]
  \centering
  \includegraphics[width=10cm]{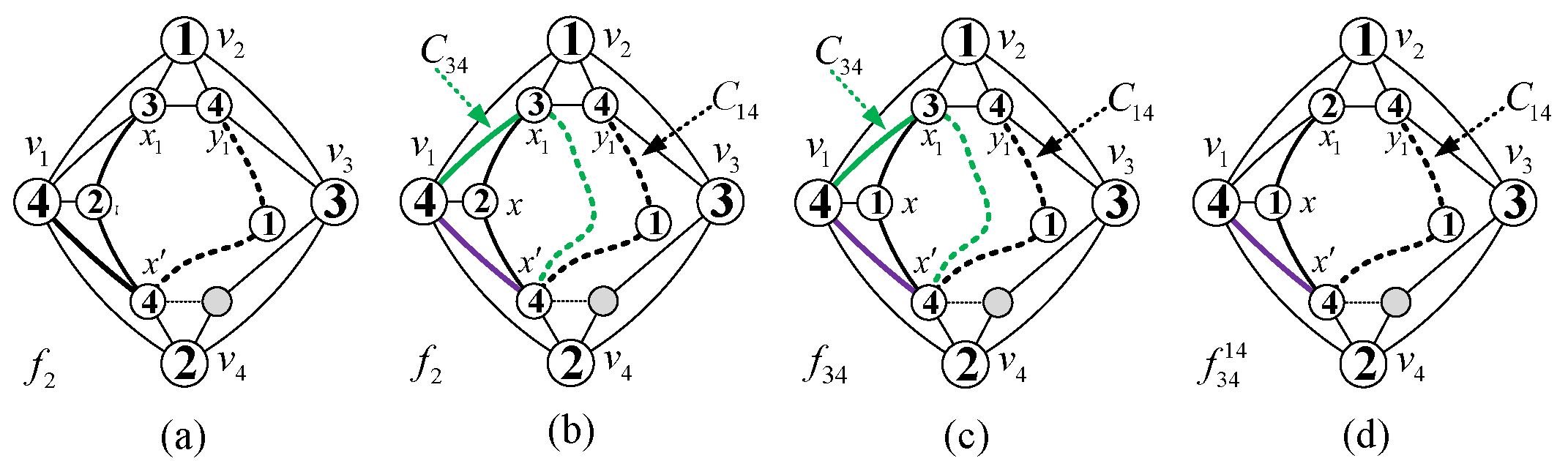}\\
  \caption {Illustration for the proof of 55-324 type 4-base-modules when $C_{14}$ contains $C_{34}$ under $f_2$}\label{fig4-17}
\end{figure}

Figure \ref{fig4-18} gives an example of  4-base-modules for case 1, in which the module-coloring $f$ of $G^{C_4}$ is not displayed. By conducting a $K$-change for the components in the interior of $C_{34}$ we obtain a new pseudo 4-coloring $f_{34}$. Clearly, $f_{34}$ contains $C_{14}$. Then, under $f_{34}$ conduct a $K$-change for the 23-component in the interior of $C_{14}$ and denote by $f_{34}^{14}$ the resulting pseudo 4-coloring. Observe that under $f_{34}^{14}$, no neighbor of $v_1$ is colored with color 3. Therefore, we can obtain a decycle coloring  $f^*$ by changing the color of $v_1$ from $4$ to $3$.

\begin{figure}[H]
  \centering
  \includegraphics[width=8cm]{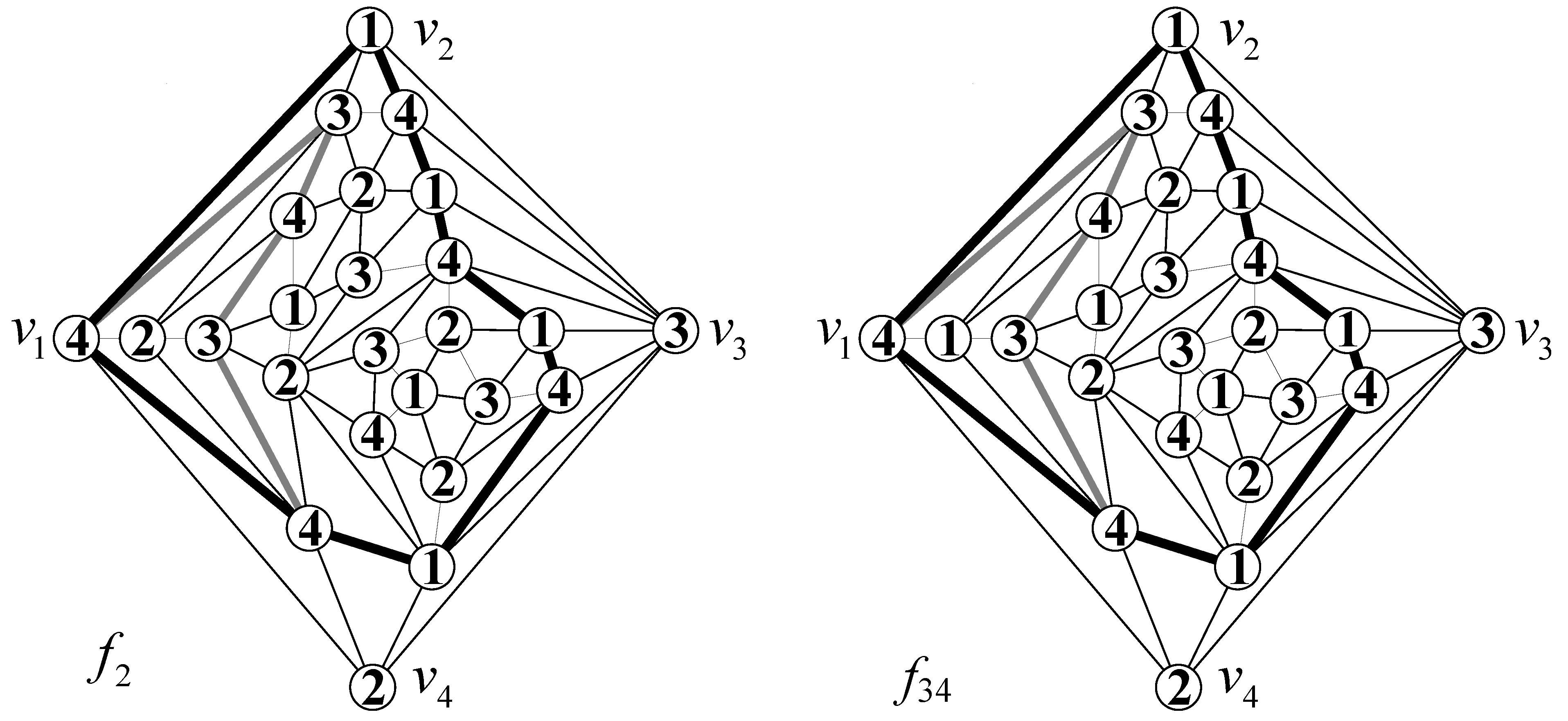}\\
  \caption {An example of 4-base-modules for case 1}\label{fig4-18}
\end{figure}

{\bf Case 2.} Under $f_2$, $C_{14}$ and $C_{34}$ are intersecting. By Theorem \ref{thm4.6},  $f_2$ contains either a 23-cycle $C_{23}$ or an 12-cycle $C_{12}$. So, $C^2(f_2)$ contains at least three bichromatic cycles such that one of them is a proper bichromatic cycle, and two of them are pseudo bichromatic  cycles $C_{14}$ and $C_{34}$. Without loss of generality, we assume the proper bichromatic cycle is  $C_{23}$ (the case for $C_{12}$ is similar). Then, $C_{23}$ is a non-module-cycle of $f_1$ (otherwise, $G^{C_4}$ is a module-cycle 4-base-module). In other words,  under $f_2$, there are 21-path, 31-path, 24-path, and 34-path in the interior of $C_{23}$.
Now, we give the detailed steps to construct a decycle coloring.

{\bf Step 1. $C_{23}$ is a good-cycle}

 Consider $C_{23}$; conduct a $K$-change for the 14-component in the interior (or exterior) of $C_{23}$  and denote by $f_{23}$ the new coloring. If $f_{23}$ contains no odd-length 34-cycle, then the pseudo 44-edge can be eliminated by conducting a pseudo 44-edge $K$-change for the 34-component of $f_{23}$, and we call $C_{23}$ a \emph{good-cycle} \footnote{In general, let $f'$ be a pseudo 4-coloring, under which the pseudo edge is on the pseudo $it$-cycle $C'$. If there is a bichromatic cycle $C$ that intersects with $C'$, then a new pseudo 4-coloring $f''$ can be obtained by conducting a $K$-change for the components in the interior (or exterior) of $C$. If  $f''$ contains no odd-length $it$-cycle, then we  call $C$ a good-cycle (which can eliminate the pseudo $it$-edge), $i,t\in \{1,2,3,4\}$.}. As an example for good-cycles, we see the 4-base-module (denoted by $G^{C_4}$) shown  Figure \ref{fig4-19} (a). Under $f_2$, $(G^{C_4})_{34}^{f_2}$ is connected and contains the cycle $C_{34}$, and  $(G^{C_4})_{13}^{f_2}$ is connected; see Figure \ref{fig4-19} (b). So, we have the cycle-related graph $H_3^{f_2}(C_{23})$ of $C_{23}$ and the resulting graph $\overline{H}_3^{f_2}(C_{23})$  by interchanging the bold  lines in $H_3^{f_2}(C_{23})$; see Figure \ref{fig4-19} (c). Since there is no cycle consisting of dashed lines in  $H_3^{f_2}(C_{23})$, $f_{23}$ contains no odd-length 34-cycle, and so $C_{23}$ is a good-cycle. In the  cycle-related graph $H_3^{f_2}(C_{23})$, dashed lines represent  34-paths and solid lines represent 13-paths. Obviously, if  $C_{23}$ is a good-cycle, then $C_{23}$ and $C_{34}$ are intersecting.

If $C_{23}$ is not a good-cycle, then switch to Step 2.

\begin{figure}[H]
  \centering
  \includegraphics[width=8cm]{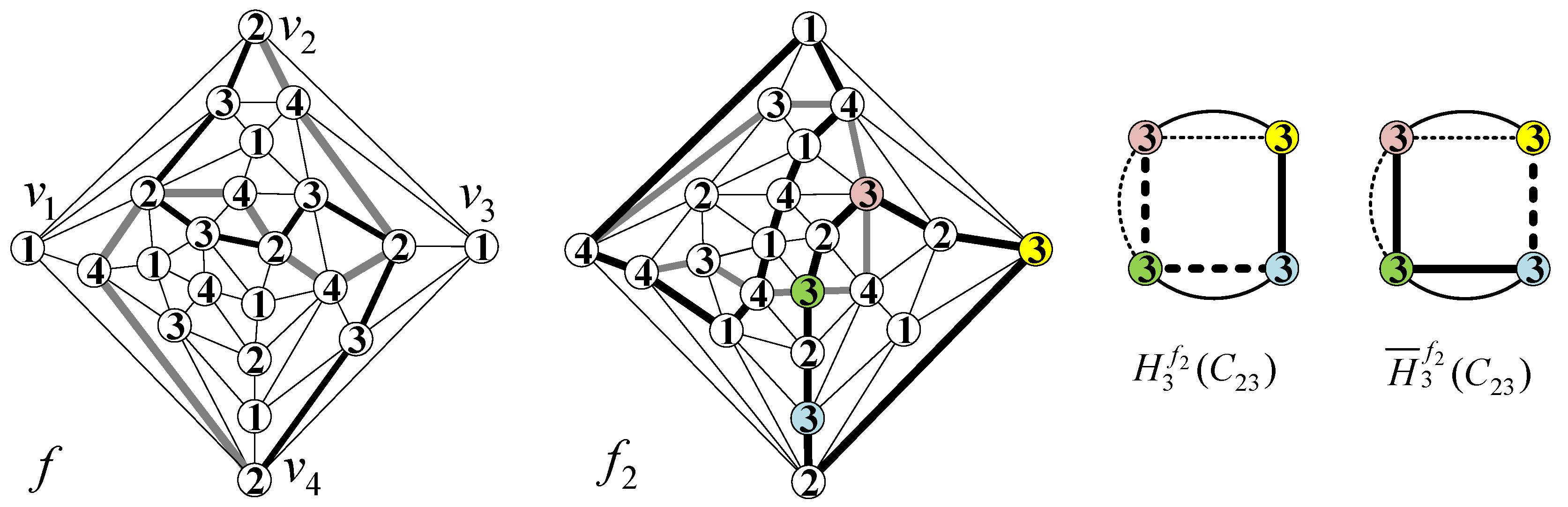}\\
  (a) \hspace{2cm}(b)\hspace{2cm}(c)
  \caption {An example of 4-base-module such that $C_{23}$ is a good-cycle, and its  cycle-related graph}\label{fig4-19}
\end{figure}

{\bf Step 2. $f_{14}$ or $f_{14}^{23}$ contains a 24-big-cycle}

A \emph{24-big-cycle} is a special good-cycle which contains vertices $v_1$ and $x_1$. In the following, we discuss the case that $f_{14}$ or $f_{14}^{23}$ contains a 24-big-cycle.

Consider $C_{14}$; conduct a $K$-change for the 23-component in the interior (or exterior) of $C_{14}$   and denote by $f_{14}$ the new pseudo 4-coloring. We assert that under $f_{14}$, there is a 24-cycle or 34-cycle that intersects with $C_{14}$.
Let $u_1,u_2,\ldots, u_{p}, p\geq 2$ be the sequence of vertices in $V(C_{14})$ that are colored with 4 under $f_2$, where  $u_1=y_1$ and $u_p$ denotes both $v_1$ and $x'$ (Observe that $v_1x'$ is a pseudo 44-edge of $f_2$; here we use one vertex $u_p$ to represent $v_1$ and $x'$). Now, consider the cycle-related graph $H_4^{f_2}(C_{14})$ with vertex set  $\{u_1,u_2,\ldots, u_{p}\}$ and a dashed (or solid) line between $u_i$ and $u_j$ if and only if there is a 34-path (or 24-path). Since the subgraph induced by vertices colored with 3 or 4 under $f_2$ is connected and contains a pseudo 34-cycle $C_{34}$, it follows that in $H_4^{f_2}(C_{14})$ the subgraph induced by the set of dashed lines is a spanning subgraph of  $H_4^{f_2}(C_{14})$ and contains a cycle, i.e., this subgraph contains $p$ lines. In addition, the  subgraph induced by vertices colored with 2 or 4 under $f_2$ is  a tree. So,  in $H_4^{f_2}(C_{14})$ the  subgraph induced by the set of solid lines is a
tree, and hence $|E(H_4^{f_2}(C_{14}))|=2p-1$. In $H_4^{f_2}(C_{14})$, if we interchange the bold dashed lines and bold solid lines, then there is a cycle consisting of solid lines (or dashed lines). This  shows that $f_{14}$ must contains a 24-cycle $C'_{24}$ or  a 34-cycle $C'_{34}$ that intersects with $C_{14}$.

If $C'_{24}$ or  $C'_{34}$ is a good-cycle, then the result holds.

In the following, we further discuss the case that $C'_{24}$ is a 24-big-cycle.  One example 24-big-cycle  satisfies the following conditions: $C'_{24}$ contains $y_1$; there is a 24-path $P_{24}^{y_1u}$  from $y_1$ to $u$ (in the exterior of $C_{14}$), where  $u$ and $u'$ are the endpoints of an 12-path or 24-path on $C_{23}$; in the exterior of $C_{14}$, there is a 24-path between $u'$ and $v_4$; see Figure \ref{fig4-20} (a). So, when there is a 24-path between $u$ and $u'$ in the interior of $C_{23}$, $f_{14}$ contains a 24-big-cycle $C'_{24}$, i.e.,

 \begin{equation} \label{equ4.3}
 C'_{24}=v_4v_1\cup v_1x_1 \cup x_1y_1 \cup P_{24}^{y_1u} \cup P_{24}^{uu'} \cup P_{24}^{u'v_4}
 \end{equation}

\noindent when  there is an 12-path between $u$ and $u'$ in the interior of $C_{23}$, $f_{14}^{23}$ contains a 24-cycle $C'_{24}$, i.e.,

 \begin{equation} \label{equ4.4}
 C'_{24}=v_4v_1\cup v_1x_1 \cup x_1y_1 \cup P_{24}^{y_1u} \cup P_{24}^{uu'} \cup P_{24}^{u'v_4}
 \end{equation}

\begin{figure}[H]
  \centering
  \includegraphics[width=9cm]{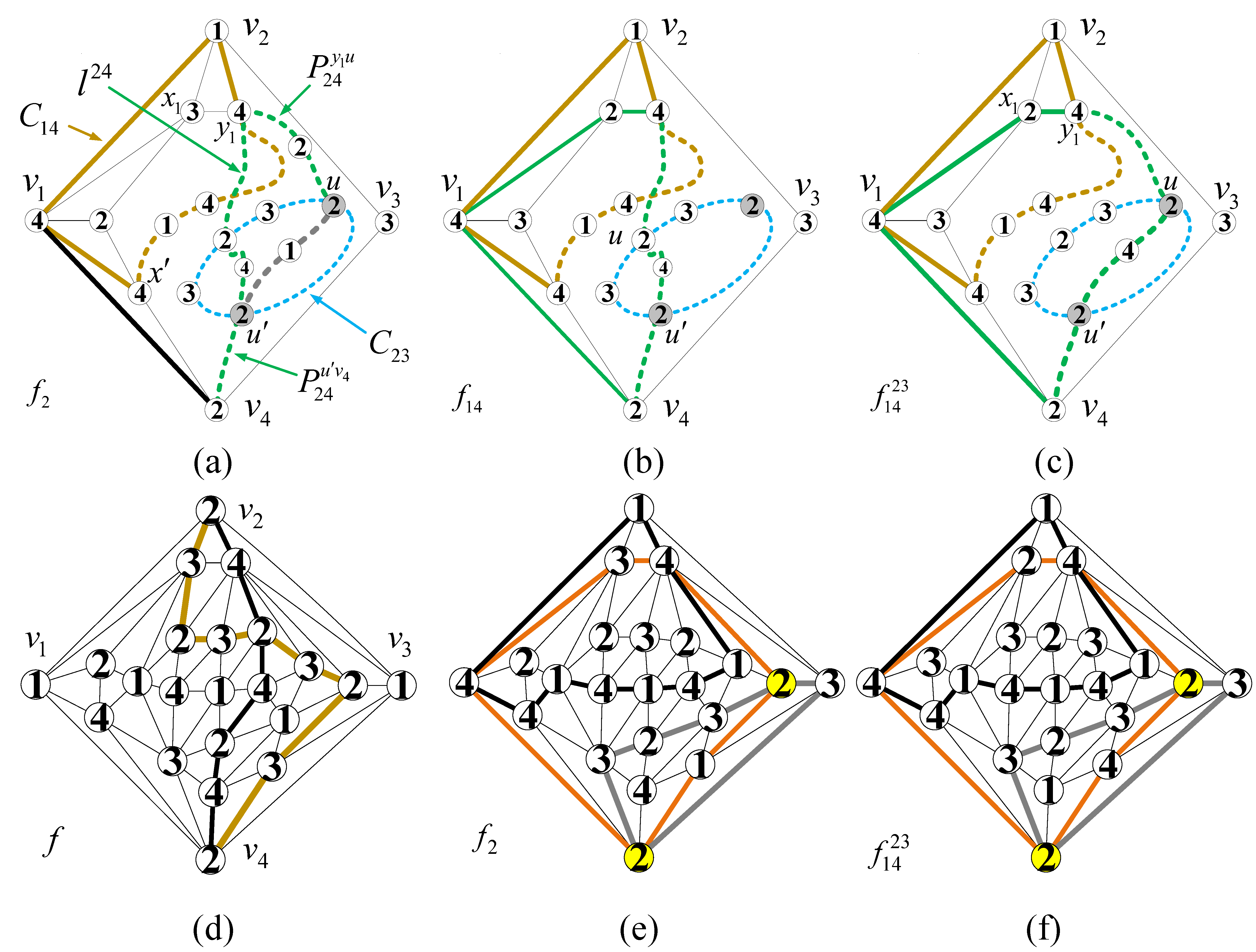}\\
  \caption {The 24-big-cycle $C'_{24}$ in $f_{14}$ or $f_{14}^{23}$ is a good-cycle}\label{fig4-20}
\end{figure}

We now showcase how to obtain a decycle coloring based on  $C'_{24}$. Under $f_2$, conduct a $K$-change  in the interior (or exterior) of $C_{14}$ and obtain a new coloring $f_{14}$. If $f_{14}$ contains a 24-big-cycle $C'_{24}$ (see Figure \ref{fig4-20} (b)), then conduct a $K$-change  in the interior of $C'_{24}$  and obtain a new coloring $f_{14}^{24}$. Under $f_{14}^{24}$, we change the color of $v_1$ from 4 to 3 and obtain  a  decycle coloring $f^*$. For the case that $C'_{24}$ is constructed under $f_{14}^{23}$ by Equation (\ref{equ4.4}), the proof is analogous; see Figure \ref{fig4-20} (c). Figures \ref{fig4-20} (d),(e), and (f) give an example for this case, where $C'_{24}$ belongs to the case in Equation (\ref{equ4.4}).

In the above, we have discussed the case that the given 24-big-cycle $C'_{24}$ involves only one 34-path $v_1x_1y_1$ in the interior of $C_{14}$. In what follows, we consider the case that under $f_2$, the 34-path consists of more than one 34-paths in the interior of $C_{14}$. Since $C'_{24}$ contains vertex $v_1$, we can view $v_1$ as one endpoint of the 34-path in the interior of $C_{14}$, and denote by $y'_1$ the other endpoint of the 34-path; see Figure \ref{fig4-21}(a), in which the 34-path is denoted by green dashed lines. Under $f_2$, conduct a $K$-change in the interior of $C_{14}$ and denote by $f_{14}$ the resulting coloring (see Figure \ref{fig4-21}(b)). Clearly, under $f_{14}$, the path denoted by green dashed lines becomes a 24-path. Figure \ref{fig4-21}(c) gives an example for this case, in which only pseudo coloring $f_2$ is given while  the labels of vertices, the coloring $f$, and other  colorings are omitted.  Based on $f_2$, it is easy to show that this example belongs to the situation we considered.

\begin{figure}[H]
  \centering
  \includegraphics[width=10cm]{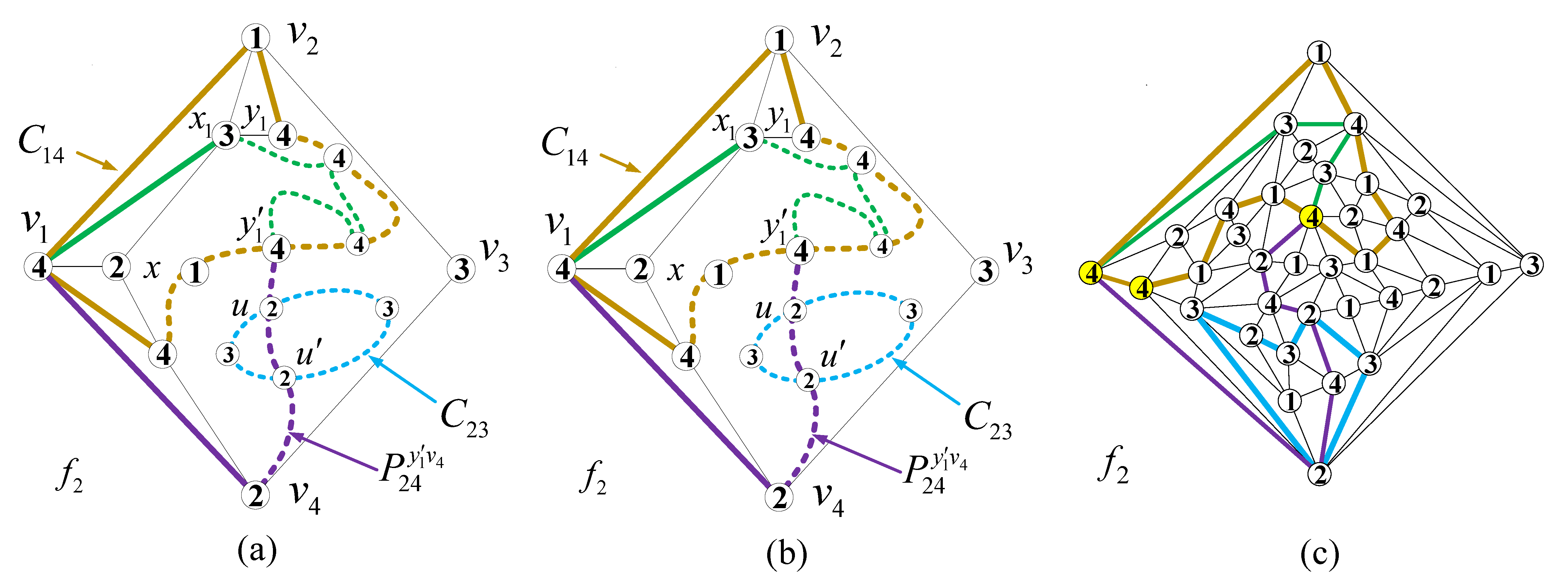}\\
  \caption {The 24-big-cycle $C'_{24}$ in $f_{14}$ or $f_{14}^{23}$ consists of more than one 34-paths of $f_2$ in the interior of $C_{14}$}\label{fig4-21}
\end{figure}

{\bf Step 3. Quasi 13-horizontal-axis decycle coloring}

For convenience,  we introduce a definition which is named as a \emph{pocket}. Let $G$ be a 4-chromatic MPG (or SMPG) with $\delta\geq 4$, and $f\in C_4^0(G)$ be a 4-coloring of $G$. Suppose that $G$ contains a subgraph $H^{C'}$ such that $H^{C'}$ is an SMPG with at least one internal vertex and the outer cycle $C'$ of $H^{C'}$ consists of two paths $P=t_1t_2\ldots t_n$ and $P'=t_1st_n$. If $f(P)=\{i,j\}, i,j\in \{1,2,3,4\}, i\neq j$, and $f(s)\in \{1,2,3,4\}\setminus f(P)$, then we call $H^{C'}$ an \emph{$ij$-pocket with mouth $t_1st_n$}.

If neither $f_{14}$ nor  $f_{14}^{23}$ contains a 24-big-cycle that  intersects with $C_{14}$, then there must be a 13-path from $x$ to $v_3$, which is called a \emph{quasi 13-horizontal-axis} and is denoted by $\ell_{13}^{xv_3}$. Let $\kappa_{13}$ be the subgraph induced by the set of vertices belonging to the cycle  $C=\ell_{13}^{xv_3} \cup v_2v_3 \cup xx_1v_2$ and its interior. Clearly, $\kappa_{13}$ is an 13-pocket with mouth $xx_1v_2$. Let $z$ be a vertex in  $\ell_{13}^{xv_3}$. Under $f_{14}$ or  $f_{14}^{23}$, if  there exists an 12-path from $x_1$ to $z$  in the interior of $\kappa_{13}$ and a 14-path from $z$ to $x'$ in the exterior of $\kappa_{13}$, then we call $z$ a \emph{bridge-vertex} on $\ell_{13}^{xv_3}$;  if  there is a 12-path from $x_1$ to $z$  in the interior of $\kappa_{13}$ but no 14-path from $z$ to $x'$ in the exterior of $\kappa_{13}$, then we call $z$ a \emph{semi-bridge-vertex} on $\ell_{13}^{xv_3}$.

Note that if  the pseudo 4-coloring $f_{14}$ or $f_{14}^{23}$ (of the 4-base-module $G^{C_4}$)  contains no  24-big-cycle, then there must be a quasi 13-horizontal-axis $\ell_{13}^{xv_3}$. In what follows, we will give a detailed discussion for the cases that $\ell_{13}^{xv_3}$ contains a bridge-vertex, contains a semi-bridge-vertex, and contains neither bridge-vertices nor semi-bridge-vertices. We always assume that these cases are under $f_{14}$ (the cases under $f_{14}^{23}$ are similar).

\textbf{Case 2.1. $\ell_{13}^{xv_3}$ contains a bridge-vertex.}

Let $z$ be a vertex of $\ell_{13}^{xv_3}$  such that $f_{14}(z)=1$, there is an 12-path $P_{12}^{x_1z}$ from $x_1$ to $z$ in the interior of $\kappa_{13}$, and there is an 14-path $P_{14}^{zx'}$ from $z$ to $x'$ in the exterior of $\kappa_{13}$ (see Figure \ref{fig4-41} (a)). Then, we can obtain a  decycle coloring $f^*$ as follows: first, under $f_{14}$, conduct a $K$-change for the 24-component in  $\kappa_{13}$, and denote by $f_{14}^{13}$ the new pseudo 4-coloring (see Figure \ref{fig4-41} (b)), i.e., interchange the colors (2 and 4) of vertices in $\{x_1\} \cup$ $Int(\kappa_{13})$, where $Int(\kappa_{13})$ is the set of internal vertices of  $\kappa_{13}$. Clearly, the 12-path $P_{12}^{x_1z}$ becomes an 14-path of $f_{14}^{13}$, denoted by $P_{14}^{x_1z}$. Thus, $C'_{14}=P_{14}^{x_1z}\cup P_{14}^{zx'} \cup v_1x' \cup v_1v_2\cup v_2x_1$ is an 14-cycle of $f_{14}^{13}$ (see Figure \ref{fig4-41} (b)); second, under $f_{14}^{13}$, conduct a $K$-change for the 23-component in the interior  of $C'_{14}$  and denote by $f_{13}^{14}$ the new pseudo 4-coloring (see Figure \ref{fig4-41} (c)). Note that no vertex adjacent to $v_1$ is colored with 3 under $f_{13}^{14}$. Therefore, based on $f_{13}^{14}$, we can obtain a decycle coloring $f^*$ by changing the color of $v_1$ from 4 to 3.  Figure \ref{fig4-41} (d) give an example for this case.

{\bf Remark 9.} If under $f_{14}$ the 13-component containing $\ell_{13}^{xv_3}$  includes an 13-cycle, and $C_{13}$ contains a bridge-vertex, then by conducting a $K$-change for the 24-component in the interior of $\kappa_{13}$ and simultaneously a $K$-change for the 24-component in the interior of $C_{13}$, we can obtain a pseudo 14-cycle whose interior contains vertex $x$; see Figure \ref{fig4-41} (e)$\sim$ (f) for an example ($f_{14}$ and $f_{14}^{23}$).

\begin{figure}[H]
  \centering
  \includegraphics[width=10cm]{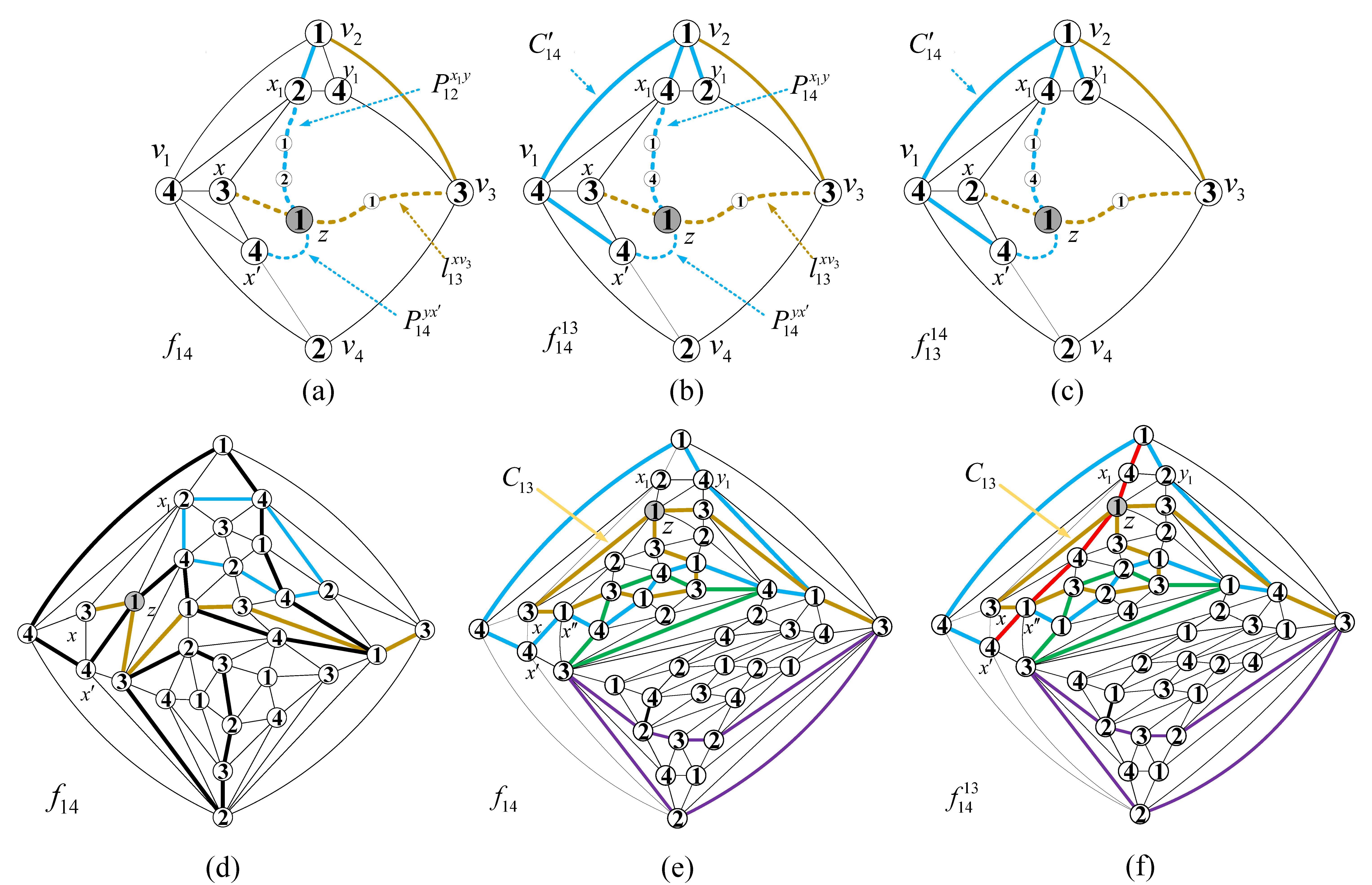}\\
  \caption {Illustration for the process of generating quasi 13-horizontal-axis colorings}\label{fig4-41}
\end{figure}

\textbf{Case 2.2. $\ell_{13}^{xv_3}$ contains no bridge-vertex but contains a semi-bridge-vertex.}

 Under $f_{14}$, let $t$ be a vertex in the interior of $\kappa_{13}$ that is colored with 4. According to the structure of vertices adjacent to $t$ on $\ell_{13}^{xv_3}$, we consider the following two cases.

\emph{Case 2.2.1}. The vertex $t$ is adjacent to a 2-length path $zz''z'$ in $\ell_{13}^{xv_3}$, where  $z$ and $z'$ are colored with 1 , $z''$ is colored with 3, and $z$ and $z'$ belong to the same 12-component in the exterior of $\kappa_{13}$.

In this case, we see that $f_{14}$ has an 12-path, denoted by $P_{12}^{zz'}$, which is in the exterior of $\kappa_{13}$. Clearly, $P_{12}^{zz'}\cup ztz'$ is an 12-pocket with mouth $ztz'$. Since under $f_{14}$ the subgraph induced by the set of vertices colored with 2 and 4 is connected,  there is a 24-path $P_{24}^{tv_1}$ from $t$ to $v_1$ in the interior of $\kappa_{13}$. Consider a vertex $t'\in  V(P_{12}^{zz'})$ with $f_{14}(t')=2$, such that there is a 23-path $P_{23}^{z''t'}$ from $t'$ to $z''$ and a 24-path $P_{24}^{t'v_1}$ from $t'$ to $v_1$; see Figure \ref{fig4-42} (a). Now, under $f_{14}$, conduct a $K$-change for the 24-component in the interior  of $\kappa_{13}$  and denote by $f_{14}^{13}$ the new pseudo 4-coloring. Clearly,  $C'_{12}=P_{12}^{zz'}\cup ztz'$ is an 12-cycle of $f_{14}^{13}$ (see Figure \ref{fig4-42} (b)). Under $f_{14}^{13}$, conduct a $K$-change for the 34-component in the interior  of $C'_{12}$  and denote by $f_{13}^{12}$ the new pseudo 4-coloring. Then,
the 23-path $P_{23}^{z''t'}$ of $f_{14}^{13}$ becomes a 24-path of $f_{13}^{12}$, denoted by $P_{24}^{z''t'}$  (see Figure \ref{fig4-42} (c)). So, $P_{24}^{tv_1}, P_{24}^{t'v_1}, tz''$, and $P_{24}^{z''t'}$ make up a 24-big-cycle of $f_{13}^{12}$, denoted by $C'_{24}$. With the method mentioned above, we can obtain a  decycle coloring based on $C'_{24}$.  Figures \ref{fig4-42} (d) and (e) give the colorings $f_{14}$ and $f_{14}^{13}$, respectively, of an example for this case.

\begin{figure}[H]
  \centering
  \includegraphics[width=12.3cm]{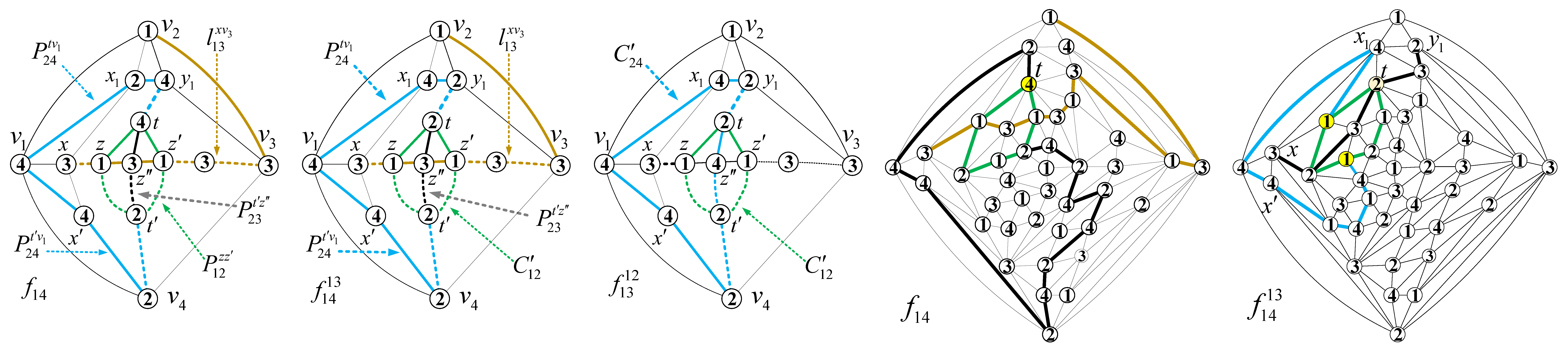}\\
  (a) \hspace{2cm}(b)\hspace{2cm}(c)\hspace{2cm}(d)\hspace{2cm}(e)
  \caption {Illustration for the process of a decycle coloring for case 2.2.1, and an example}\label{fig4-42}
\end{figure}

\emph{Case 2.2.2}. $\kappa_{23}$ contains no such a vertex $t$ that satisfies the condition in Case 2.2.1. Then, $f_{14}$ must contains a 34-cycle $C'_{34}$, and  $C'_{34}$ intersects with $\ell_{13}^{xv_3}$.

\begin{figure}[H]
  \centering
  \includegraphics[width=12.3cm]{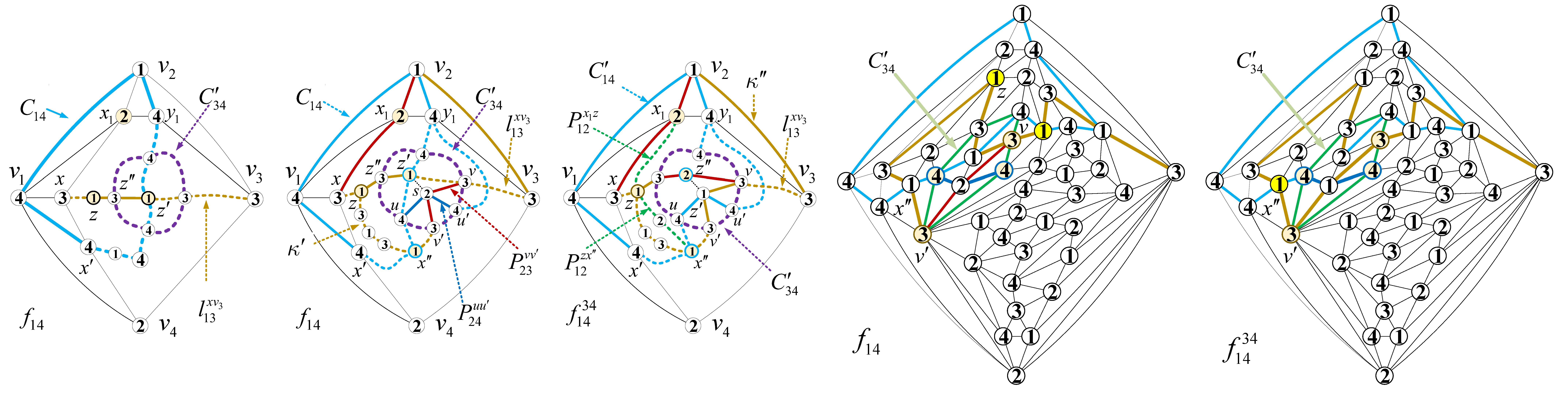}\\
  (a) \hspace{2cm}(b)\hspace{2cm}(c)\hspace{2cm}(d)\hspace{2cm}(e)
  \caption {Illustration for the situation that the quasi 13-horizontal-axis contains no bridge-vertex under $f_{14}$  while a new quasi 13-horizontal-axis contains a bridge-vertex under $f_{14}^{34}$ }\label{fignew4-41}
\end{figure}

Since $\kappa_{13}$ contains no vertex $t$ with the property in case 2.2.1, there must be a path $zz''z'$  in
$\ell_{13}^{xv_3}$, where $z$ and $z'$ are colored with 1, $z''$ is colored with 3, and exactly one of $z$ and $z'$ is in the interior of $C'_{34}$. This implies that $\ell_{13}^{xv_3}$ intersects with $C'_{34}$. See  Figure \ref{fignew4-41}(a) for the sketch of this case.
Conduct a $K$-change for the 12-component in the interior  of $C'_{34}$  and denote by $f_{14}^{34}$ the new pseudo 4-coloring. Then, under $f_{14}^{34}$, there must be a new pseudo 14-cycle $C'_{14}$ and a new  quasi 13-horizontal-axis (otherwise, either there is no odd-length 14-cycle, or there is a 24-big-cycle. Still denote by $\ell_{13}^{xv_3}$ the quasi 13-horizontal-axis). So, there are two vertices $u$ and $u'$ on $C'_{34}$ that are colored by 4. Based on $f_{14}$, there is a 24-path from $u$ to $u'$ in the interior of $C'_{34}$, denoted by $P_{24}^{uu'}$. Clearly, $P_{24}^{uu'}$ is in the either interior or  exterior of $\kappa_{13}$, and there are 14-paths from $u$ to $x'$ and from $u'$ to $y_1$ in the exterior of $C'_{34}$
In addition, there exist two vertices $v$ and $v'$ on $C'_{34}$ that are colored with 3, and   $f_{14}$ contains a 23-path from $v$ to $v'$ in the interior of $C'_{34}$, denoted by $P_{23}^{vv'}$. Here, we assume that $P_{23}^{vv'}$ is in the exterior of $C_{14}$ (the proof for the case that $P_{23}^{vv'}$ is in the interior of $C_{14}$ is similar). There is an 13-path from $v$ to $v_3$ in the exterior of $C'_{34}$, denoted by $P_{13}^{vv'}$. Clearly, $P_{13}^{vv'}$ is a sub-path of the quasi 13-horizontal-axis (under $f_{14}$). Note that there is an 13-path from $v'$ to $x$, denoted by $P_{13}^{v'x}$. Without loss of generality, we assume that $P_{13}^{v'x}$ intersect with  $\ell_{13}^{xv_3}$ at $z$. So, under $f_{14}$, there exists an 13-pocket $\kappa'_{13}$ with mouth $vsv'$ and outer cycle $P_{13}^{vz}\cup P_{13}^{zv'}\cup vsv'$, and hence there are two 13-pockets $\kappa_{13}$ and $\kappa'_{13}$. Let $x'' \in V(\kappa'_{13})\cap V(C_{14})$ be a vertex in $C_{14}$ that has the smallest distance with $v'$ in $G^{C_4}$. Then there must be an 14-path from $x''$ to $x'$, denoted by $P_{14}^{x'x''}$; see Figure \ref{fignew4-41} (b).

Observe that under $f_{14}$ there is no bridge-vertex. Conduct a $K$-change in the interior of $C'_{34}$ and obtain a new pseudo 4-coloring $f_{14}^{34}$. Under $f_{14}^{34}$, a new 13-horizontal-axis is still denoted by $\ell_{13}^{xv_3}$ and a new corresponding 13-pocket is denoted by $\kappa''_{13}$. If $\ell_{13}^{xv_3}$ contains a bridge-vertex, then we can obtain a quasi 13-horizontal-axis decycle coloring. See Figure \ref{fignew4-41} (c) for the schematic diagram for this situation and see  Figure \ref{fignew4-41} $(d)\sim (e)$ for an example.

Under $f_{14}$, since $z$ is a semi-bridge-vertex and $z$ is in the 14-component including $C_{14}$, there must be an 14-path from $z$ to an internal vertex $y_1$ of $\kappa''_{13}$. For this case,
under $f_2$, there is still an 14-path from $z$ to $y_1$. Note that $f_2$ contains only one 34-component, in  which there is a pseudo 34-cycle $C_{34}$ and a 34-horizontal-axis $v_1x_1y_1v_3$. So, there are in total three 12-components.
We  further consider two subcases. For convenience, let $x_2$ be the third vertex of a triangle with an edge $x_1y_1$ in the following proof.

\textbf{(i) $x_2$ is in the exterior of $C_{34}$}

\begin{figure}[H]
  \centering
  \includegraphics[width=8cm]{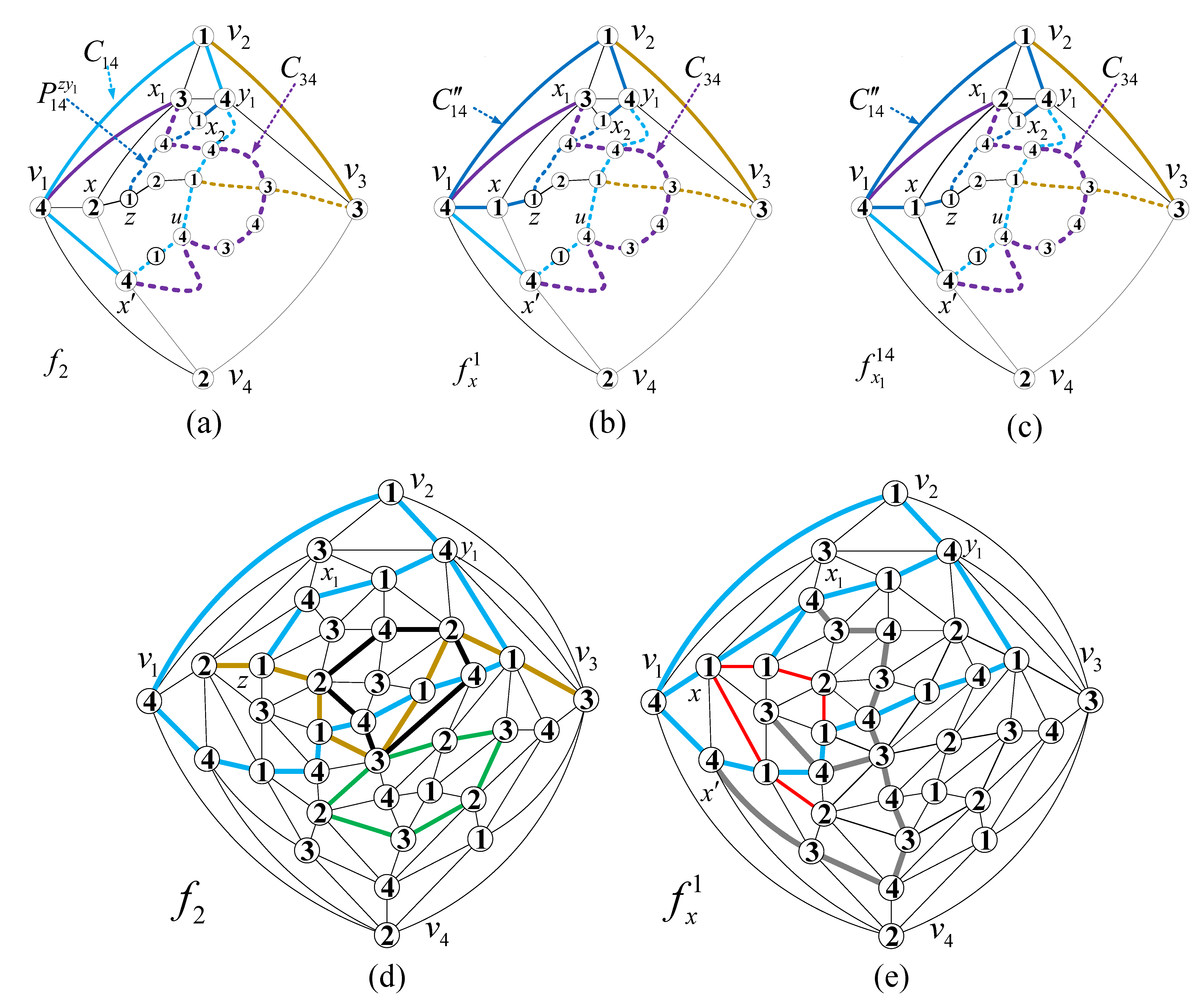}\\
  \caption {Illustration for the situation that $x_2$ is in the exterior of $C_{34}$ and an example}\label{fignew4-42}
\end{figure}

See Figure \ref{fignew4-42} (a) for the diagram of this structure. We can obtain a decycle coloring by the following steps. First, under $f_2$, change the color of $x$ from $2$ to 1, and denote by $f_x^1$ the new pseudo 4-coloring. Clearly, the 12-components remain unchanged from $f_2$ to $f_x^1$. In addition, $f_x^1$ has two pseudo 14-cycles, where one is $C_{14}$ and the other is a new pseudo 14-cycle, denoted by $C''_{14}$ (see Figure \ref{fignew4-42} (b)). Second, conduct a $K$-change for the 23-component in the interior of $C''_{14}$ and obtain a new pseudo 4-coloring, denoted by $f^{14}_{x_1}$ (see Figure \ref{fignew4-42} (c)). Note that under $f^{14}_{x_1}$, no vertex adjacent to $v_1$ is colored with 3. So, we can change the color of $v_1$ from $4$ to 3. Observe that all of the  pseudo 11-edges belong to the 12-component in the interior of the 34-pocket. Therefore, all of these pseudo 11-edges can be eliminated, which results in a decycle coloring $f^*$. See Figure \ref{fignew4-42} (d)$\sim$ (e) for an example of this situation.

\textbf{(ii) $x_2$ is in the interior of $C_{34}$}

See Figure \ref{fignew4-43} (a) for the diagram of this structure. Since both $x$ and $x_2$ are in the interior of $C_{34}$, $f_2$ contains a 12-path $P_{12}^{xx_2}$ from $x$ to $x_2$. We can get a decycle coloring by the following steps.

\begin{figure}[H]
  \centering
  \includegraphics[width=9cm]{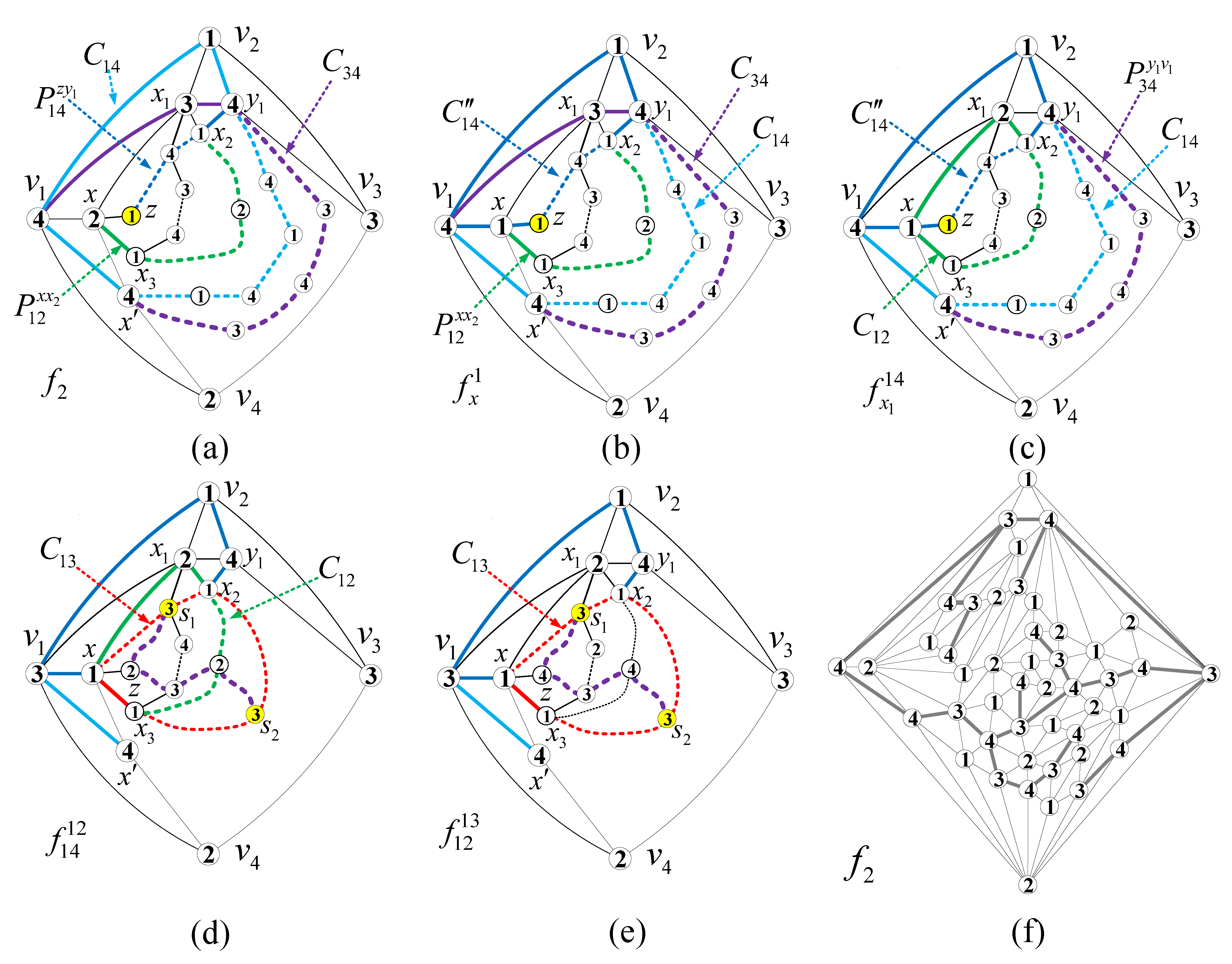}\\
  \caption {Illustration for the situation that $x_2$ is in the interior of $C_{34}$ and an example}\label{fignew4-43}
\end{figure}

First, under $f_2$, change the color of $x$ from $2$ to 1, and denote by $f_x^1$ the new pseudo 4-coloring. Clearly, the 12-components remain unchanged from $f_2$ to $f_x^1$. In addition, $f_x^1$ has two pseudo 14-cycles, where one is $C_{14}$ and the other is a new pseudo 14-cycle, denoted by $C''_{14}$ (see Figure \ref{fignew4-43} (b)).

Second, conduct a $K$-change for the 23-component in the interior of $C''_{14}$ and obtain a new pseudo 4-coloring, denoted by $f^{14}_{x_1}$. The following three statements hold:  no vertex adjacent to $v_1$ is  colored with 3 under $f^{14}_{x_1}$; $f^{14}_{x_1}$ contains an 12-cycle (denoted by $C_{12}$) which consists of $P_{12}^{xx_2}$ and $xx_1x_2$; there may be more that one pseudo 11-edges, in which one pseudo 11-edge is in the pseudo odd-length cycle $C_{12}$ (so it can not be eliminated by one pseudo 11-edge $K$-change) and all of the other pseudo 11-edges are in the interiors of the corresponding 34-pockets; see Figure \ref{fignew4-43} (c).

Third, under $f_{x_1}^{14}$, change the color of  $v_1$ from 4 to 3. Then, the pseudo 11-edges (except the one on $C_{12}$) can be eliminated by using the 34-pockets. Meanwhile, conduct a $K$-change for the 34-component in the interior of $C_{12}$ and obtain a new pseudo 4-coloring, denoted by $f^{12}_{14}$. By using the technique of  cycle-related graphs, we can deduce that $f_{14}^{12}$ contains an 13-cycle that intersects with $C_{12}$, denoted by $C_{13}$. Consider the the cycle-related graph $H_1^{f_{x_1}^{14}}(C_{12})$ of $C_{12}$. Let $u_1=v_2, u_2=x_1, u_3,\ldots,u_p=\{x,v_1\} (p\geq 2)$ be the consecutive vertices along with $C$ that are colored with 1. Observe that $f_{x_1}^{14}$ contains two 14-components (one is in the interior of a 23-cycle $C_{23}$ and the other is in the exterior of $C_{23}$), in which there is an 14-cycle $C''_{14}$ intersecting with $C_{12}$. So, the number of lines in $H_1^{f_{x_1}^{14}}(C_{12})$ that are contributed by 14-components is equal to $p$. Since $f_{x_1}^{14}$ contains exactly one 13-component, we have that the number of lines in $H_1^{f_{x_1}^{14}}(C_{12})$ that are contributed by 13-components is equal to $p-1$. This shows that $H_1^{f_{x_1}^{14}}(C_{12})$ contains in total $2p-1$ lines. Thus, by an analogous proof as that for  $H_4^{f_{2}}(C_{14})$, we have that $H_1^{f_{14}^{12}}(C_{12})$ contains either 13-cycles or 14-cycles, all of which consist of both bold lines and fine lines.  Now, according to the structure of $C_{12}$, we see that $f_{14}^{12}$ contains an 13-cycle $C_{13}$ that intersects with $C_{12}$. Under $f_{14}^{12}$, there is only one 23-component in the interior of $C_{14}$; in particular, there is a 23-path between two vertices $s_1,s_2 \in  V(C_{13})$, where $s_1$ is adjacent to $x_1$ and $s_2$ is the vertex of $C_{13}$ that is colored with 3 and  has the smallest distance with $x_2$; see Figure \ref{fignew4-43} (d).

Four, under $f_{14}^{12}$, conduct a $K$-change for the 24-component in the interior of $C_{13}$ and obtain a new pseudo 4-coloring, denoted by $f^{13}_{12}$; see Figure \ref{fignew4-43} (e). Clearly, $f_{12}^{13}$ contains no pseudo odd-length 12-cycle.  
By conducting a pseudo 11-edge ($xx_3$) $K$-change (in $G^{C_4}$) for the 12-component, we can
obtain a decycle coloring $f^*$. Figure \ref{fignew4-43} (f) gives  an example (a 4-coloring under $f_2$) of this case, in which the labels of vertices are omitted.

\textbf{Case 2.3. $\ell_{13}^{xv_3}$ contains neither bridge-vertex nor semi-bridge-vertex.}

Under $f_{14}$, if the quasi 13-horizontal-axis contains neither a bridge vertex nor a semi-bridge-vertex, then there must be a 34-path from $y_1$ to $x$, denoted by $P_{34}^{y_1x}$. So, $f_{14}$ contains a 34-cycle $C'_{34}$ and also a 34-horizontal-axis; see Figure \ref{fignew4-44} (a).

Note that $f_{14}$ contains two 14-components, where one is in the interior of a 23-cycle $C_{23}$ and the other (including $C_{14}$) is in the exterior of $C_{23}$. Let $s$ be a vertex in  $P_{34}^{y_1x}$ that is adjacent to $x$ and is colored with 4. Since $s$ and $x'$ belong to the same 14-component of $f_{14}$, there is an 14-path $P_{14}^{x's}$ between them. Clearly, the length of $P_{14}^{x's}$ is at least 2. Let $s'$ be the vertex in quasi 13-horizontal axis that is adjacent to $x$.
In what follows, we consider two subcases in terms of the length of $P_{14}^{x's}$.

\textbf{Case 2.3.1.} $P_{14}^{x's}=x's's$

Clearly, $x's's$ be a bichromatic path of $f_{14}$, where $f_{14}(s)=f_{14}(x')=4$ and $f_{14}(s')=1$; see Figure \ref{fignew4-44} (b). For this case, if there does not exist any 12-path between $s'$ and $v_4$, then there must exist a 34-path between $x'$ and $v_3$, which contradicts to the fact that $f_{14}$ contains only one 34-cycle. So, $f_{14}$ contains an 12-path from $s'$ to $v_4$, denoted by $P_{12}^{s'v_4}$.  Since $f_{14}$ contains only one 14-cycle, it follows that $s's$ is in the interior of a 23-pocket; see Figure \ref{fignew4-44} (b). Then, we can obtain a decycle coloring as follows.

First, under $f_{14}$, change the color of $x'$ from 4 to 3 and denote by $f_{x'}^{3}$ the resulting pseudo 4-coloring. Then, $f_{x'}^{3}$ contains a 23-cycle, denoted by $C'_{23}$; see Figure \ref{fignew4-44} (c).

Second, under $f_{x'}^{3}$, conduct  a $K$-change for the 14-component in the interior of $C'_{23}$, and then conduct  a $K$-change for the 13-component containing $x$, denoted by $f_3^{23}$ the resulting 4-coloring; see Figure \ref{fignew4-44} (d). Under $f_3^{23}$, the 13-horizontal-axis is eliminated. So, all of the pseudo 11-edges can be eliminated by conducting pseudo 11-edge $K$-changes for 13-components. This results in a decycle coloring.  Figure \ref{fignew4-44} (e)$\sim$(f) give  an example of this case, in which the labels of vertices are omitted.

 \begin{figure}[H]
  \centering
  \includegraphics[width=9cm]{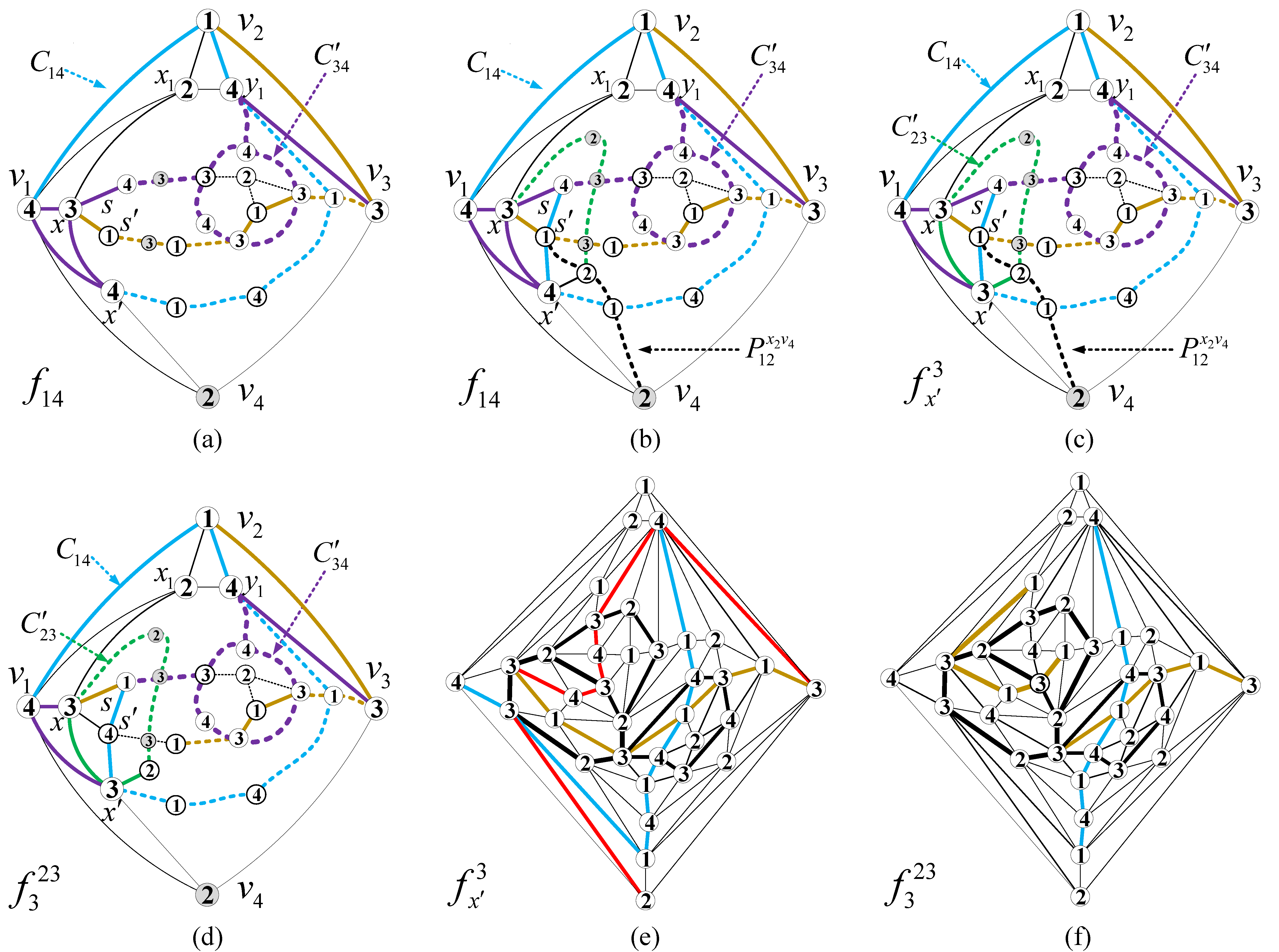}\\
  \caption {Illustration for the process of obtaining a decycle coloring when $\ell_{13}^{xv_3}$ (based on $x's's$ ) contains no bridge-vertex and semi-bridge-vertex, and an example}\label{fignew4-44}
\end{figure}

\textbf{Case 2.3.2.} $P_{14}^{x's} \neq x's's$

In this case, there is an 14-pocket with mouth $sxx'$ and the outer cycle $P_{14}^{x's}\cup sxx'$. We use  $\kappa_{14}$  to denote this 14-pocket.  Clearly, by a similar analysis as Case 2.3.1,  there is an 12-path $P_{12}^{s''v_4}$ from $s''$ to $v_4$, where $s''$ is a neighbor of $x$ that is colored with 2 under $f_{14}$; see Figure \ref{fignew4-45} (a). We can obtain a decycle coloring as follows.

First, under $f_{14}$, change the color of $x$ from 3 to 1 and denote by $f_{x}^{1}$ the resulting pseudo 4-coloring. Then, $\kappa_{14}$ (under $f_{14}$) becomes an 14-cycle of $f_x^1$, denoted by $C'_{14}$; see Figure \ref{fignew4-45} (b).

Second, under $f_{x}^{1}$, conduct  a $K$-change for the 23-component in the interior of $C'_{14}$, and then change the color of $v_1$ from 4 to 3. If there is a pseudo 11-edge in the exterior of $C'_{14}$, then we conduct a $K$-change for the 13-component containing $x$ and denote by $f_1^{14}$ the resulting 4-coloring; see Figure \ref{fignew4-45} (c). Observe that $f_1^{14}$ contains no 12-path from $s''$ to $v_4$.   
So, all of the pseudo 11-edges can be eliminated by conducting pseudo 11-edge $K$-changes for 12-components. This results in a decycle coloring.  Figure \ref{fignew4-45} (d)$\sim$(f) describe   an example of this case, in which we only give the 4-colorings of the 4-base-module under $f_{14}, f_x^1, $ and $f_1^{14}$. This completes the proof of Claim 1. \qed
\end{proof}
 \begin{figure}[H]
  \centering
  \includegraphics[width=10cm]{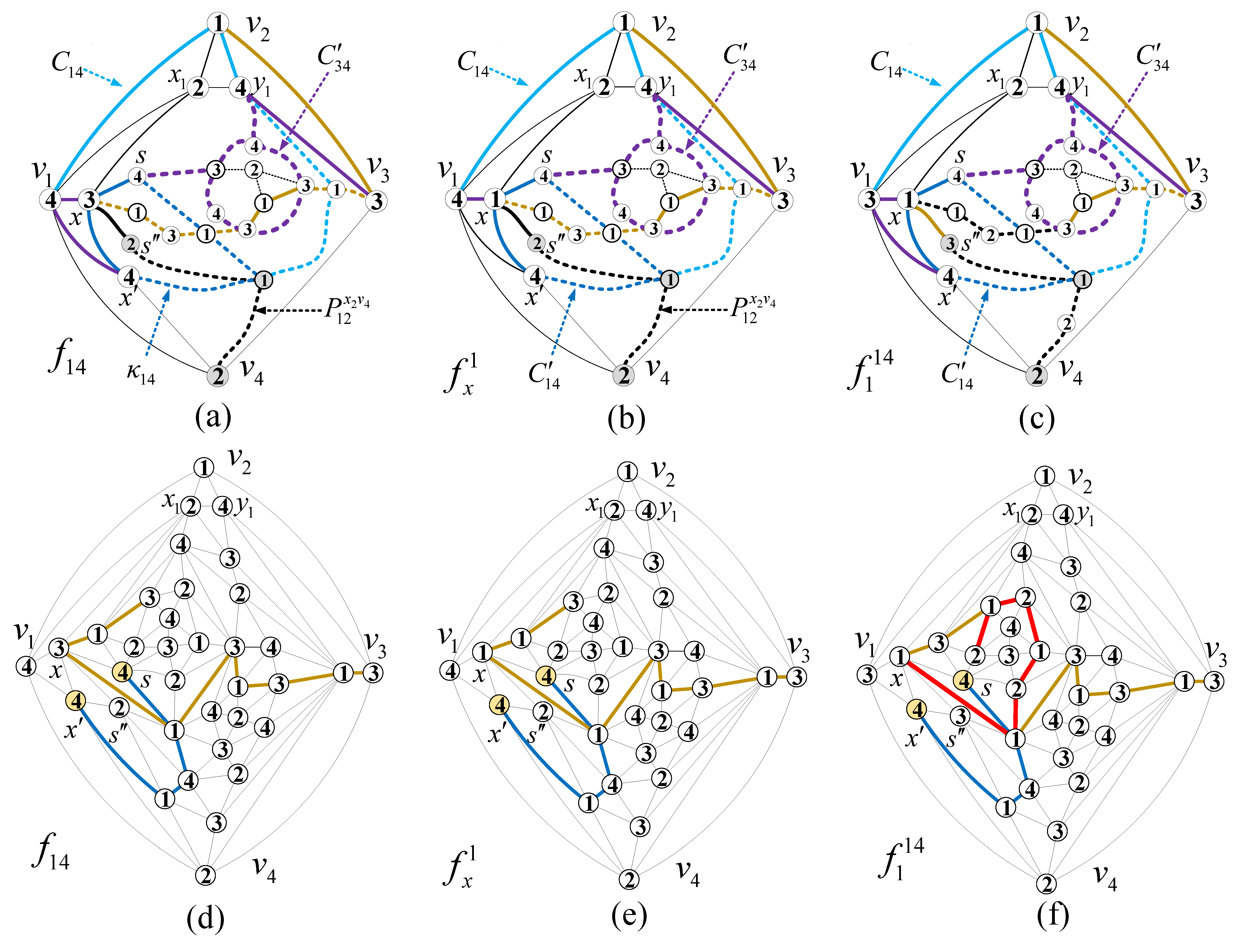}\\
  \caption {Illustration for the process of obtaining a decycle coloring when $P_{14}^{x's} \neq x's's$ and $\ell_{13}^{xv_3}$ contains neither bridge-vertex nor semi-bridge-vertex, and an example}\label{fignew4-45}
\end{figure}

{\bf Claim 2.} \textbf{55-343 type 4-base-modules are decyclizable}.

{\bf Proof of Claim 2.} For the 55-343 type 4-base-module shown in Figure \ref{fig4-17} (b), its 14-sloped-axis  under $f_2$ is shown in Figure \ref{fig4-25}(a), in which there is a unique pseudo 44-edge $v_1x$ that is in the pseudo 14-cycle $C_{14}$. Observe that the pseudo 44-edge $v_1x$ is 33-type. If $(G^{C_4})_{24}^{f_2}$ contains no odd-cycle, then the pseudo edge can be eliminated by conducting a pseudo 44-edge $K$-change for  the 24-component containing $v_1$, and the proof is completed. So, it is enough to deal with the case that under $f_2$, $v_1x$ is an edge of an odd-length 24-cycle $C_{24}$. We consider the following two cases.

\begin{figure}[H]
  \centering
  \includegraphics[width=10cm]{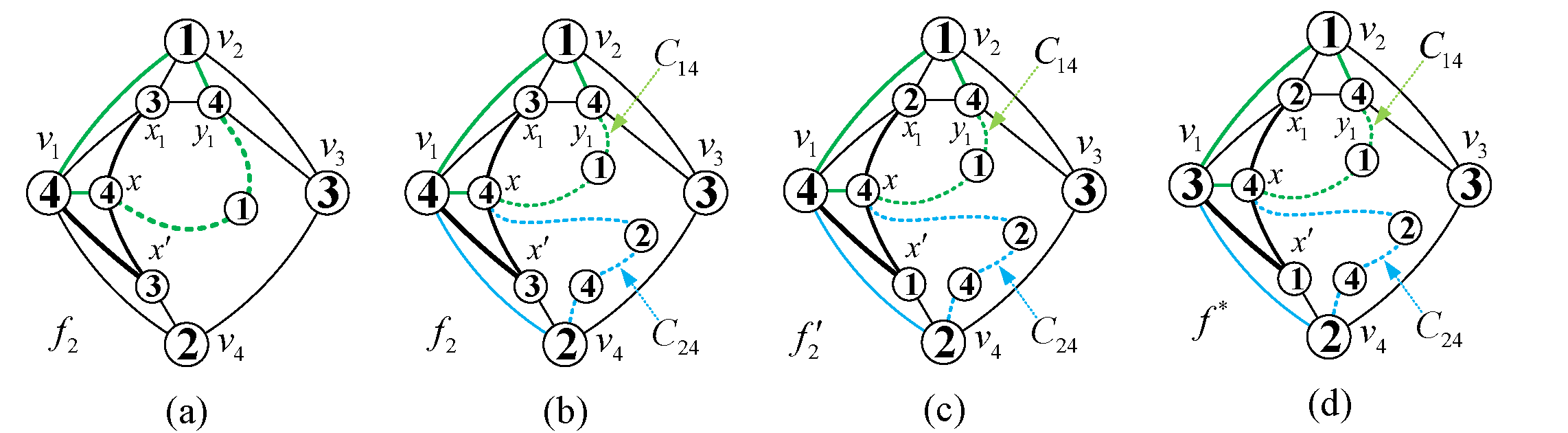}\\
  \caption {Illustration for the proof of 55-343 type 4-base-modules when $C_{14}$ and $C_{24}$ are nonintersecting under $f_2$}\label{fig4-25}
\end{figure}

{\bf Case 1.} Under $f_2$, $C_{14}$ and $C_{24}$ are nonintersecting. In this case, $v_4$ must be a vertex of $C_{24}$; see Figure \ref{fig4-25} (b).
Thus, we first conduct $K$-changes for the 13-component and 23-component  in the interior of $C_{24}$ and $C_{14}$, respectively,  and denote by $f'_2$ the resulting pseudo 4-coloring. Obviously, $f'_2(x')=1$ and $f'_2(x_1)=2$; see Figure \ref{fig4-25} (c). Based on $f'_2$, we further change the color of $v_1$ from 4 to 3, and obtain a decycle coloring $f^*$; see Figure \ref{fig4-25} (d).

{\bf Case 2.} Under $f_2$, $C_{14}$ and $C_{34}$ are intersecting. By Theorem \ref{thm4.6},  $f_2$ contains either a 23-cycle $C_{23}$ or a 12-cycle $C_{12}$. So, $C^2(f_2)$ contains at least three bichromatic cycles such that one of them is a bichromatic cycle without pseudo edges (called proper bichromatic cycle), and two of them are  pseudo bichromatic  cycles $C_{14}$ and $C_{24}$. Without loss of generality, we assume the proper bichromatic cycle is  $C_{23}$ (the case for $C_{12}$ is similar). Then, $C_{23}$ is not a module-cycle of $f_1$ (otherwise, $G^{C_4}$ is a module-cycle 4-base-module).

\begin{figure}[H]
  \centering
  \includegraphics[width=8cm]{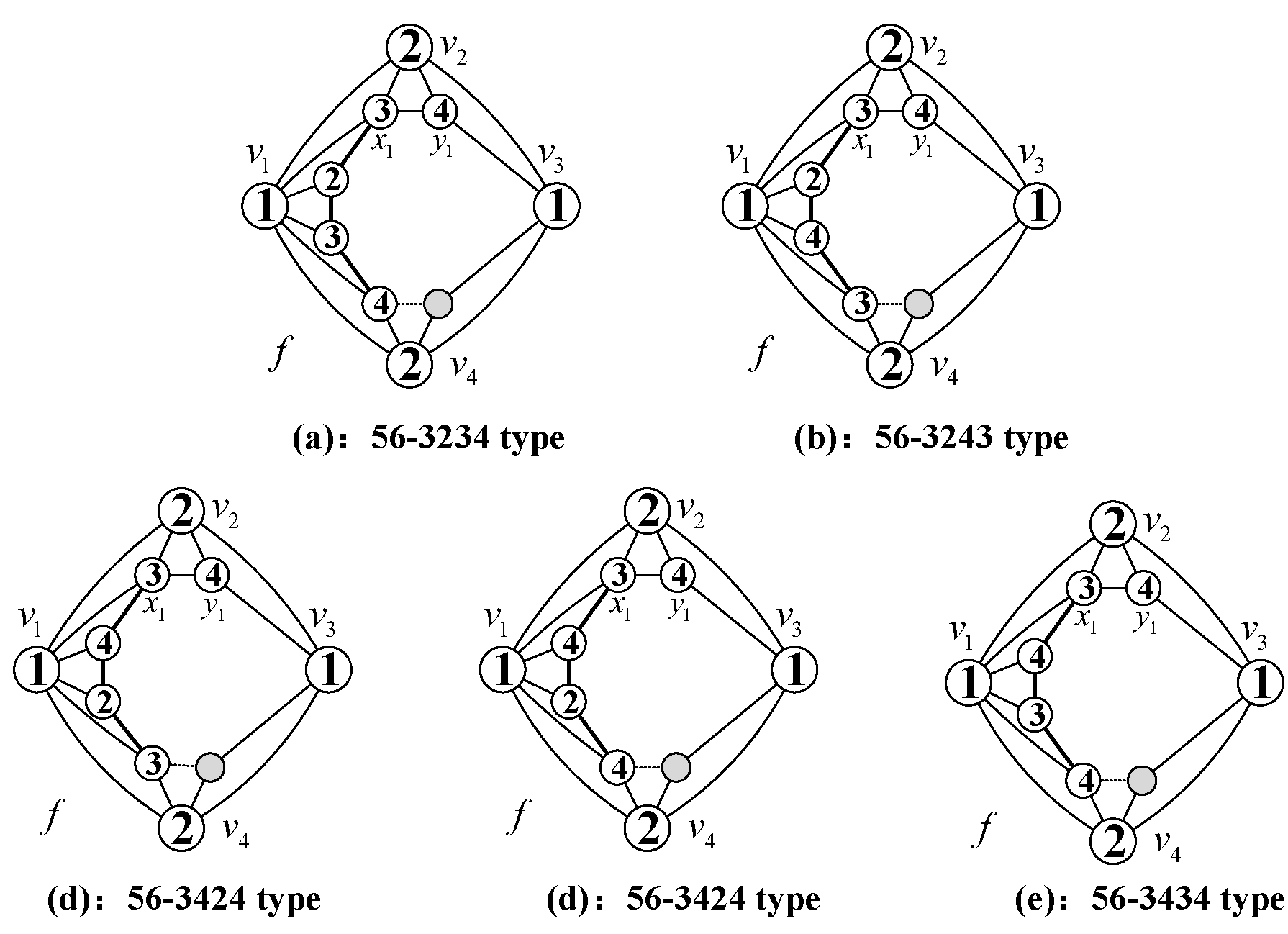}\\
  \caption {Illustration for the five types of 4-base-modules}\label{fig4-26}
\end{figure}

Note that in the 55-324 type 4-base-module, $C^2(f_2)$ contains bichromatic cycles $C_{14}, C_{34}, C_{23}$, where $C_{14}$ and $C_{34}$ are intersecting; however, in the 55-343 type 4-base-module, $C^2(f_2)$ contains bichromatic cycles $C_{14}, C_{24}, C_{23}$, where $C_{14}$ and $C_{24}$ are intersecting. Therefore, the process of constructing a decycle coloring for the 55-343 type 4-base-module is similar to that for the 55-324 type 4-base-module, and is omitted here.

If $d(v_1)=6$, then there are total five cases of colorings under $f_2$; see Figure \ref{fig4-26} (a) $\sim$ (e) for the structures of these 4-base-modules under the module-coloring $f$, which are called $56$-3234 type 4-base-modules, $56$-3243 type 4-base-modules, $56$-3423 type 4-base-modules, $56$-3424 type 4-base-modules, and $56$-3434 type 4-base-modules, respectively. The process of constructing a decycle coloring for these five type 4-base-modules are similar to that for 55-324 type 4-base-modules.

{\bf Claim 3.} \textbf{56-3234 type 4-base-modules are decyclizable}.

{\bf Proof of Claim 3.} For the 56-3234 type 4-base-module shown in Figure \ref{fig4-26} (a), its 14-sloped-axis  under $f_2$ is shown in  Figure \ref{fig4-27}(a). Based on $f_2$, conduct a $K$-change for the 23-component in the interior of  $C_{14}$, and denote by $f_{14}$ the resulting pseudo 4-coloring; see Figure \ref{fig4-27}(b). Under $f_{14}$, the pseudo 44-edge $v_1x$ is 22-type. So, we can obtain a decycle coloring $f^*$ for this type by a similar way as that for 55-324 types.

\begin{figure}[H]
  \centering
  \includegraphics[width=6cm]{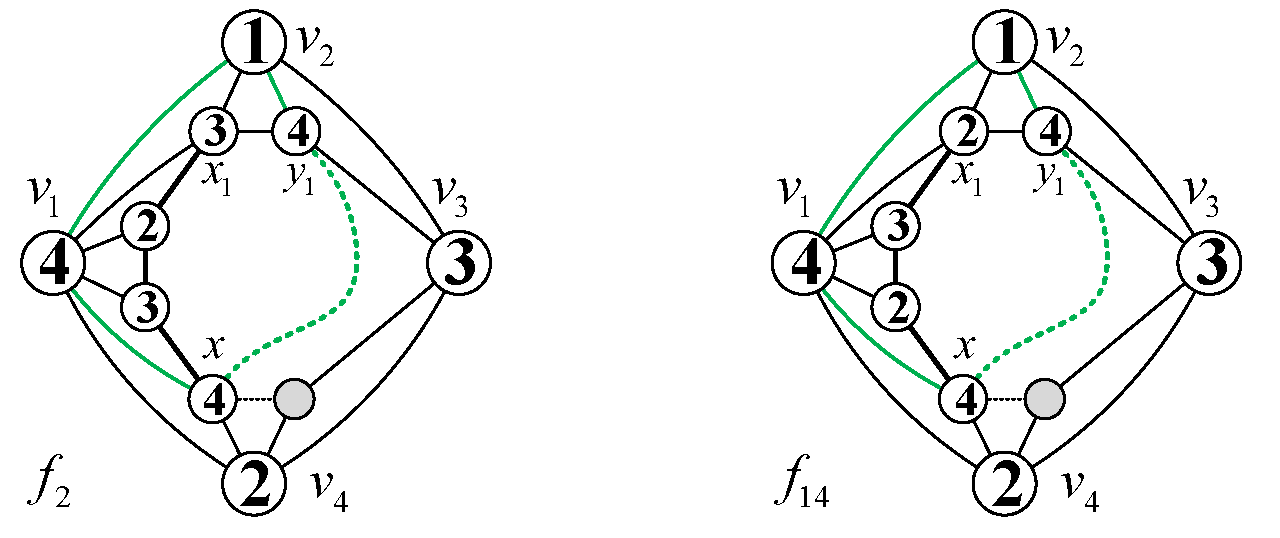}\\
  (a) \hspace{3cm}(b)
  \caption {Illustration for the conversion from 56-3234 type to 55-324 type}\label{fig4-27}
\end{figure}

{\bf Claim 4.} \textbf{56-3243 type 4-base-modules are decyclizable}.

{\bf Proof of Claim 4.} For the 56-3243 type 4-base-module shown in Figure \ref{fig4-26} (b), its 14-sloped-axis  under $f_2$ is shown in  Figure \ref{fig4-28}(a). Based on $f_2$, conduct a $K$-change for the 23-component in the interior of  $C_{14}$, and denote by $f_{14}$ the resulting pseudo 4-coloring; see Figure \ref{fig4-28}(b). Under $f_{14}$, the pseudo 44-edge $v_1x$ is 33-type. So, we can obtain a decycle coloring $f^*$ for this type by a similar way as that for 55-343 types.

\begin{figure}[H]
  \centering
  \includegraphics[width=6cm]{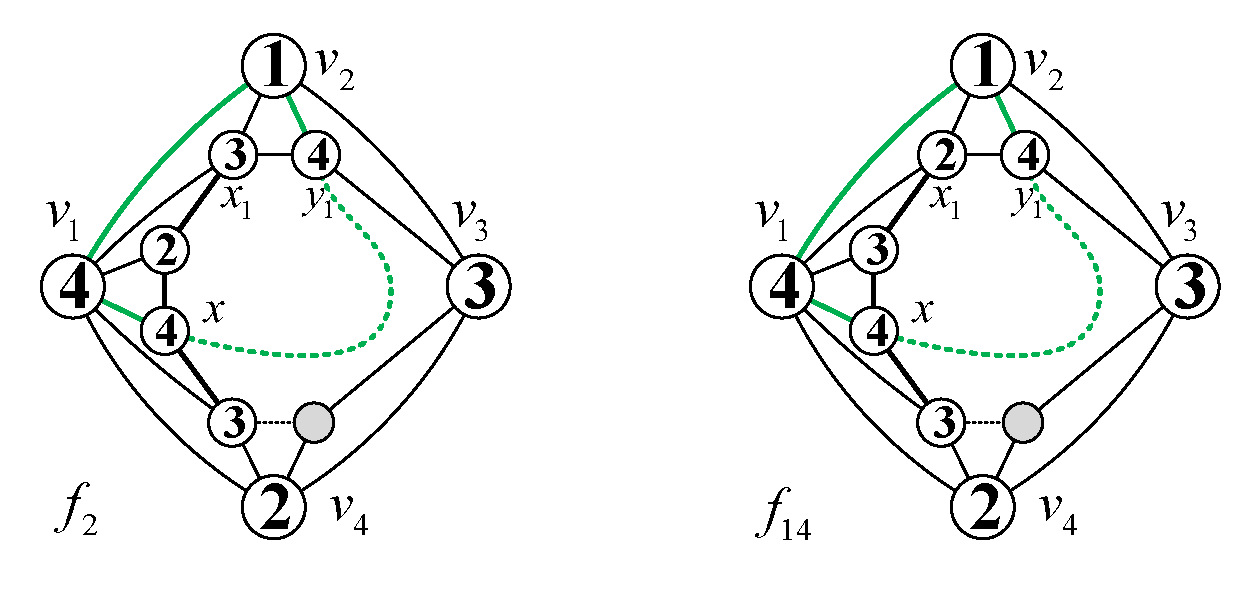}\\
  (a) \hspace{3cm}(b)
  \caption {Illustration for the conversion from 56-3243 type to 55-343 type}\label{fig4-28}
\end{figure}

{\bf Claim 5.} \textbf{56-3423 type 4-base-modules are decyclizable}.

\begin{figure}[H]
  \centering
  \includegraphics[width=5.5cm]{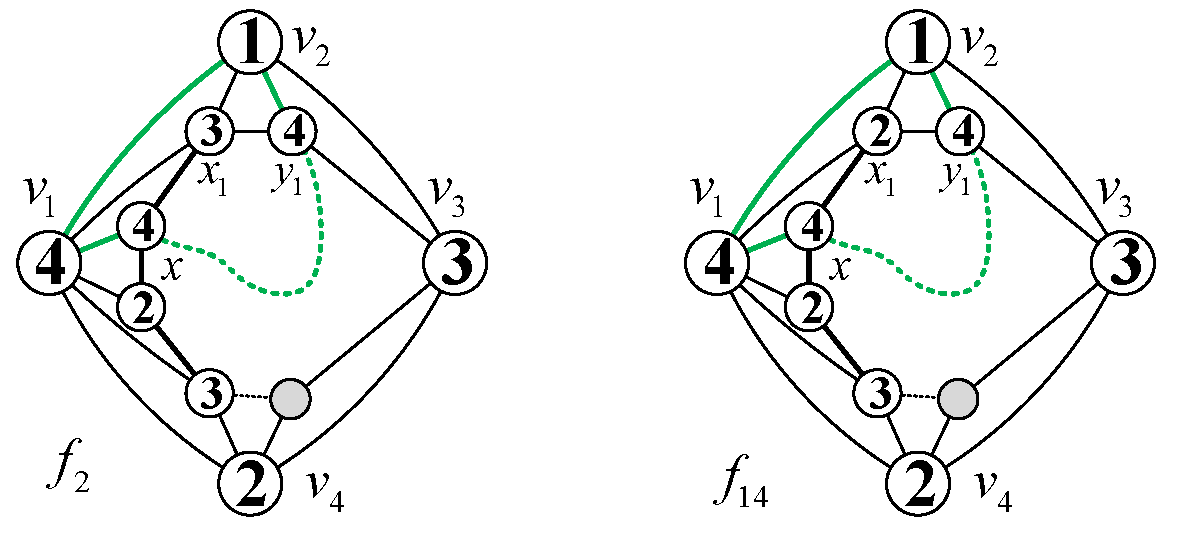}\\
  (a) \hspace{3cm}(b)
  \caption {Illustration for the conversion from 56-3423 type to 55-324 type}\label{fig4-29}
\end{figure}

{\bf Proof of Claim 5.} For the 56-3423 type 4-base-module shown in Figure \ref{fig4-26} (c), its 14-sloped-axis  under $f_2$ is shown in  Figure \ref{fig4-29}(a). Based on $f_2$, conduct a $K$-change for the 23-component in the interior of  $C_{14}$, and denote by $f_{14}$ the resulting pseudo 4-coloring; see Figure \ref{fig4-29}(b). Under $f_{14}$, the pseudo 44-edge $v_1x$ is 22-type. So, we can obtain a decycle coloring $f^*$ for this type by a similar way as that for 55-324 type.

{\bf Claim 6.} \textbf{56-3424 type 4-base-modules are decyclizable}.

{\bf Proof of Claim 6.} For the 56-3424 type 4-base-module shown in Figure \ref{fig4-26} (d), there are two cases for its 14-sloped-axis structure under $f_2$. One case is that there is an 14-sloped-axis from $x'$ to $y_1$, which forms a pseudo 14-cycle $C_{14}$ together with $v_1x',v_1v_2$ and  $y_1v_2$;   see Figure \ref{fig4-30}(a). For this case, conduct a $K$-change for the 23-component in the interior of  $C_{14}$, and obtain a pseudo 4-coloring, denote by $f_{14}$; see Figure \ref{fig4-30}(b). Under $f_{14}$,  no vertex that is adjacent to $v_1$ is colored with 3. So, based on $f_{14}$ we can obtain a decycle coloring $f^*$  by recoloring $v_1$ with 3. The other case is that under $f_2$ the pseudo 44-edge $v_1x'$ is not on an  odd-length 14-cycle, but the pseudo 44-edge $v_1x$ is on an odd-length 14-cycle. So, this pseudo 44-edge $v_1x'$ can be eliminated by
conducting a pseudo 44-edge $K$-change for the 14-component,  and then obtain a new pseudo 4-coloring $f'_2$; see Figure \ref{fig4-30}(c).
Since under $f'_2$ the pseudo 44-edge $v_1x$ is 22-type, we can obtain a decycle coloring $f^*$ for this type by a similar way as that for 55-324 type.

\begin{figure}[H]
  \centering
  \includegraphics[width=8cm]{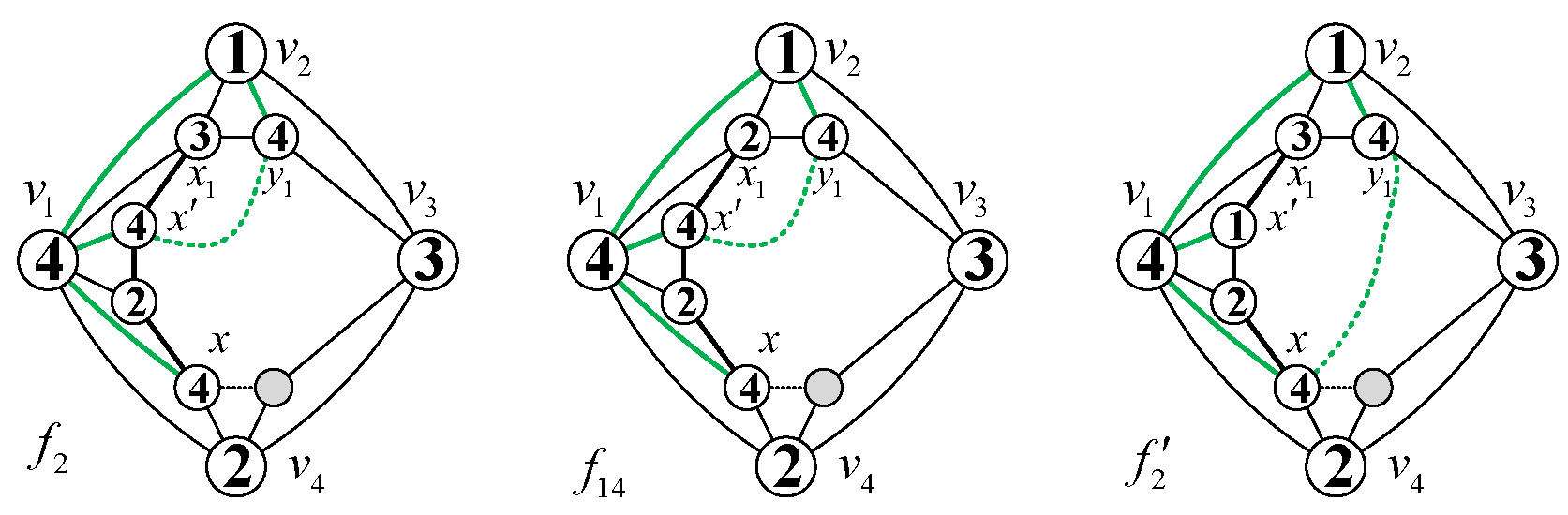}\\
  (a) \hspace{2cm}(b)\hspace{2cm}(c)
  \caption {Illustration for the conversion from 56-3424 type to 55-324 type}\label{fig4-30}
\end{figure}

{\bf Claim 7.} \textbf{56-3434 type 4-base-modules are decyclizable}.

\begin{figure}[H]
  \centering
  \includegraphics[width=10cm]{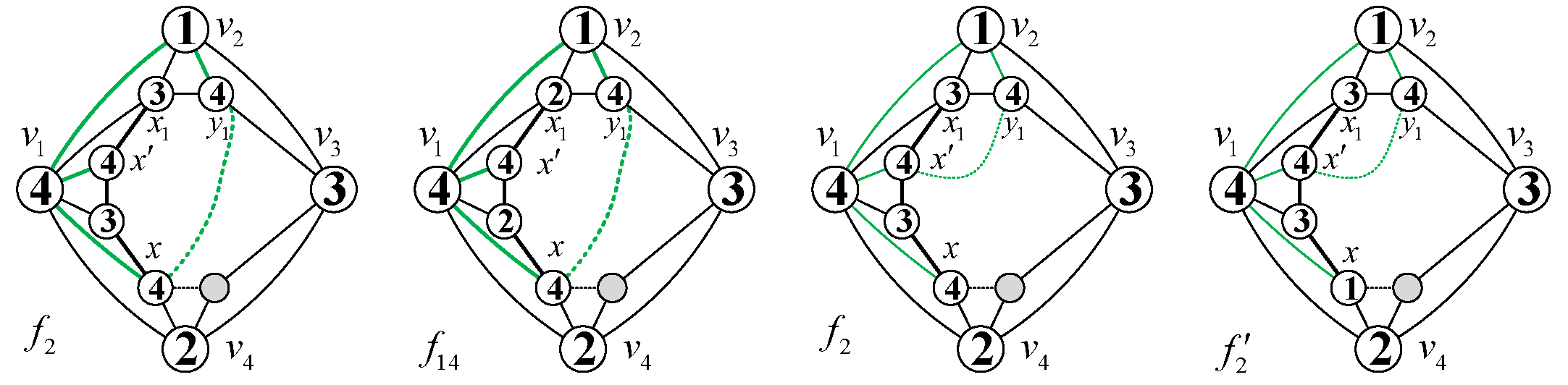}\\
  (a) \hspace{1.8cm}(b)\hspace{1.8cm}(c)\hspace{1.8cm}(d)
  \caption {Illustration for the conversion from 56-3434 type to 55-343 type}\label{fig4-31}
\end{figure}

{\bf Proof of Claim 7.} For the 56-3434 type 4-base-module shown in Figure \ref{fig4-26} (e), there are two cases for its 14-sloped-axis  under $f_2$. One case is that there is an 14-sloped-axis from $x$ to $y_1$, which contains the pseudo 44-edge $v_1x$; see Figure \ref{fig4-31}(a). For this case, conduct a $K$-change for the 23-component in the interior of  the pseudo bichromatic cycle $C_{14}$, and obtain a pseudo 4-coloring $f_{14}$; see Figure \ref{fig4-31}(b). Under $f_{14}$,  no vertex that is adjacent to $v_1$ is colored with 3. So, based on $f_{14}$, we can obtain a decycle coloring $f^*$  by recoloring $v_1$ with 3. The other case is that under $f_2$ there is no 14-sloped-axis from $x$ to $y_1$, but there is an 14-sloped-axis from $x'$ to $y_1$, which implies that the pseudo 44-edge $v_1x$ is not on an odd-length 14-cycle; see Figure \ref{fig4-30}(c). So, the pseudo 44-edge $v_1x$ can be eliminated by conducting a pseudo 44-edge ($v_1x$) $K$-change for the 14-component, and obtain a new pseudo 4-coloring $f'_2$; see Figure \ref{fig4-30}(d).
Since under $f'_2$ the pseudo 44-edge $v_1x$ is 33-type, we can obtain a decycle coloring $f^*$ for this type by a similar way as that for 55-343 type.

\subsection{Cycle-type and cyclic-cycle-type 4-base-modules}

Recall that, in Section \ref{sec4-1}, 4-base-modules are divided into three classes: tree-type, cycle-type, and cyclic-cycle-type. In Section \ref{sec3-6}, we have proved that tree-type 4-base-modules are decyclizable. In this section, we will show that the other two types of 4-base-modules are also decyclizable.

\begin{theorem}\label{thm3-7}
 Let $G^{C_4}$ be a 4-base-module and $f$ be a module-coloring of $G^{C_4}$, where $C_4=v_1v_2v_3v_4v_1$, $f(v_1)=f(v_3)=1$, and  $f(v_2)=f(v_4)=2$.  Suppose that $f$ is a cycle-coloring or cyclic-cycle coloring, under which there are two intersecting module-paths between $v_2$ and $v_4$, 23-module-path $\ell^{23}$ and 24-module-path $\ell^{24}$. If $\ell^{24}$ does not intersect with a bichromatic cycle or a cyclic-cycle, then the following two statements hold.

 (1) The 4-coloring $f_1$, obtained from $f$ by  conducting a $K$-change for the 13-component containing $v_3$, contains  12-cycles or 23-cycles that intersect with the 24-module-path $\ell^{24}$.

 (2) There exists a 4-coloring $f^*\in C_4^0(G^{C_4})$ such that $f^*(v_2)\neq f^*(v_4)$.
\end{theorem}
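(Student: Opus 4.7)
The plan is to adapt the module-path-related graph technique of Theorem \ref{thm4.6} to the cycle-type and cyclic-cycle-type setting, and then use the bichromatic cycle produced by (1) as a module-cycle (in the sense of Remark 8 and Theorem \ref{thmf4-4}) to deliver the decycle coloring required by (2).

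For part (1), I would build the multigraph $H_2^f(\ell^{24})$ on the vertex set $\{u_1=v_2,u_2,\ldots,u_p=v_4\}$ of vertices of $\ell^{24}$ colored $2$, with fine solid/bold solid edges recording $12$-paths on the left/right side of $\ell^{24}$ and fine dashed/bold dashed edges recording $23$-paths on the corresponding sides. The prescribed $K$-change acts only on one side of $\ell^{24}$, so $H_2^{f_1}(\ell^{24})$ is obtained from $H_2^{f}(\ell^{24})$ by exchanging the bold solid and bold dashed edges while preserving every fine edge. The hypothesis that $\ell^{24}$ does not intersect any bichromatic cycle or cyclic-cycle of $f$ is exactly what is needed to force both the solid-edge subgraph and the dashed-edge subgraph of $H_2^f(\ell^{24})$ to be connected and spanning, in direct analogy with the tree-coloring analysis of Theorem \ref{thm4.6}. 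A counting argument then yields $|E(H_2^f(\ell^{24}))|\geq 2p-1$, so after the bold swap either the solid subgraph or the dashed subgraph of $H_2^{f_1}(\ell^{24})$ must contain a cycle; lifting it back to $G^{C_4}$ produces a $12$-cycle or $23$-cycle of $f_1$ that intersects $\ell^{24}$.

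For part (2), let $C$ be the bichromatic cycle provided by (1). Since $C$ intersects $\ell^{24}$, it functions as a module-cycle in the sense of Remark 8, and a $K$-change in the interior of $C$ destroys one of the two module-paths between $v_2$ and $v_4$. Iterating the Remark 8 procedure on any newly created bichromatic cycle removes every surviving $2i$-module-path in finitely many steps, after which a $K$-change on the $23$-component containing $v_2$ followed by recoloring $v_2$ with the freed color gives a proper 4-coloring $f^*\in C_4^0(G^{C_4})$ with $f^*(v_2)\neq f^*(v_4)$; equivalently, $G^{C_4}$ is a module-cycle 4-base-module and Theorem \ref{thmf4-4} yields the conclusion directly.

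The main obstacle is the counting in step (1): one must verify that the non-intersection hypothesis really is strong enough to guarantee the tree-like edge count $|E(H_2^f(\ell^{24}))|\geq 2p-1$, despite the fact that $G_{12}^f$ and $G_{23}^f$ may carry non-trivial cycles elsewhere in $G^{C_4}$. A secondary difficulty is confirming termination of the iterative module-cycle procedure in (2); this should follow from a monovariant, such as the length of the surviving module-path together with the number of bichromatic cycles enclosing vertices of $\ell^{24}$, strictly decreasing at each iteration.
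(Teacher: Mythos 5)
Your argument for part~(1) follows the paper's own route: build the module-path-related graph $H_2^f(\ell^{24})$, observe that the non-intersection hypothesis forces the solid subgraph to be connected with a cycle (it carries the $12$-cycle $C_4$) and the dashed subgraph to be a tree, count $|E(H_2^f(\ell^{24}))|\geq 2p-1$, and conclude that after swapping the bold edges one of the two subgraphs retains a cycle. This is essentially the proof the paper gives, transplanted directly from the proof of Theorem~\ref{thm4.6}.

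Part~(2), however, has a genuine gap. You claim that the bichromatic cycle $C$ produced in~(1) ``functions as a module-cycle in the sense of Remark~8'' because it intersects $\ell^{24}$, and that a $K$-change inside $C$ ``destroys one of the two module-paths between $v_2$ and $v_4$.'' Neither of these is automatic: intersecting $\ell^{24}$ is precisely the setting in which the paper distinguishes \emph{module-cycles} from \emph{non-module-cycles}, and Figure~\ref{final4-6} together with the surrounding text explicitly exhibits a cycle intersecting a module-path for which the $K$-change on the interior does \emph{not} remove the module-path. Being a module-cycle in the sense of Remark~8 is by definition the assertion that the iterative procedure terminates without a $23$- or $24$-module-path, so invoking Theorem~\ref{thmf4-4} here presupposes the very thing that needs to be proved. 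Your proposed monovariant (length of the surviving module-path together with the number of enclosing bichromatic cycles) is not shown to decrease, and the paper's own handling of this obstacle is not a monovariant at all but the lengthy case analysis of Theorem~\ref{thm4-7} --- $23$-vertical-axis versus $14$-sloped-axis colorings, $55$-$324$/$55$-$343$ types, good-cycles, $24$-big-cycles, quasi $13$-horizontal-axis colorings, bridge- and semi-bridge-vertices, and pockets --- which exists exactly because a naive single $K$-change (or a naive iteration) can stall. The paper explicitly routes part~(2) through that analysis (``with a similar proof as that in Theorem~\ref{thm4-7}''), whereas your argument substitutes for it an unproved termination claim. To close the gap you would either have to actually establish your monovariant, or reproduce the Theorem~\ref{thm4-7} case analysis in the cycle/cyclic-cycle setting.
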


\begin{proof}
(1) Let $u_1,u_2,\ldots, u_p, p\geq 3$ be the consecutive vertices in $\ell^{24}$ that are colored with 2  under $f$, where $u_1=v_2$, and $u_p=v_4$. Consider the $\ell^{24}$-related graph $H_2^f(\ell^{24})$. Since (under $f$) $\ell^{24}$ intersects with neither a bichromatic cycle nor a cyclic-cycle, it follows that $u_1, u_2,\ldots,u_p$ belong to the same 12- and 23-component of $f$, where 12-component contains the 12-cycle $C_4$. Therefore, the subgraph of $H_2^f(\ell^{24})$ induced by solid lines is connected and contains a cycle, and the subgraph of $H_2^f(\ell^{24})$ induced by dashed lines is a tree, i.e., $H_2^f(\ell^{24})$ has in total $2p-1$ lines. Consider the $\ell^{24}$-related graph $H_2^{f_1}(\ell^{24})$ under $f_1$ which is obtained on the basis of $H_2^f(\ell^{24})$; we see that $f_1$ contains   12-cycles or 23-cycles that intersects with the 24-module-path $\ell^{24}$. This completes the proof.

Based on the proof of statement (1), we can prove statement (2) with a similar proof as that in Theorem \ref{thm4-7}. \qed
\end{proof}

\begin{theorem}\label{thm3-8}
 Let $G^{C_4}$ be a MMP-type 4-base-module  and $f$ be a module-coloring of $G^{C_4}$, where $C_4=v_1v_2v_3v_4v_1$, $f(v_1)=f(v_3)=1$, and  $f(v_2)=f(v_4)=2$.  Suppose that $d_{G^{C_4}}(v_2)=4, d_{G^{C_4}}(v_4)\in \{5,6\}$, and under $f$  there are two intersecting module-paths between $v_2$ and $v_4$, 23-module-path $\ell^{23}$ and 24-module-path $\ell^{24}$. Then, there exists a 4-coloring $f^*\in C_4^0(G^{C_4})$ such that $f^*(v_2)\neq f^*(v_4)$.
\end{theorem}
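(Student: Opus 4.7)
The plan is to reduce Theorem \ref{thm3-8} to Theorem \ref{thm3-7} whenever possible, and otherwise to adapt the vertical-axis and sloped-axis analysis of Theorem \ref{thm4-7} to the diagonal degree-configuration here. The key structural novelty relative to Theorem \ref{thm4-7} is that the two low-degree vertices $v_2$ and $v_4$ are diagonally opposite on $C_4$ rather than adjacent, and that $G^{C_4}$ is of MMP-type, so the Kempe class $F^f(G^{C_4})$ may contain many module-colorings whose 24-module-path meets extra bichromatic cycles.

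Within the finite class $F^f(G^{C_4})$, I would select a module-coloring $f^\dagger$ minimizing the quantity $\mu(f^\dagger)$ defined as the number of bichromatic cycles $C' \neq C_4$ of $f^\dagger$ whose interior meets some 24-module-path of $f^\dagger$. If $\mu(f^\dagger)=0$, then $f^\dagger$ has a 24-module-path disjoint from every other bichromatic cycle; provided its 23- and 24-module-paths are still intersecting, Theorem \ref{thm3-7}(2) yields the desired decycle coloring $f^*$. If those paths happen to be parallel instead, Theorem \ref{thm4-3} applied with $d(v_2)=4$ produces $f^*$ directly.

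The nontrivial case is $\mu(f^\dagger) > 0$. Here I would pick an innermost bichromatic cycle $C$ of $f^\dagger$ intersecting $\ell^{24}$, noting that $f^\dagger(C) \neq \{1,2\}$ by Theorem \ref{thm2.1} applied to the UB-cycle $C_4$. Applying $\sigma(f^\dagger, C)$ produces a new module-coloring $f^{\dagger\dagger} \in F^f(G^{C_4})$. Using the cycle-related graph machinery of Section \ref{sec5} together with the transformation rule of Theorem \ref{thm4.6}, I would track how bichromatic cycles are created or destroyed by the $\sigma$-operation: when $f^\dagger(C) \in \{\{1,3\},\{1,4\},\{2,3\},\{3,4\}\}$, the interchange of the two absent colors inside $C$ replaces a 24-subpath inside $C$ by a non-24-subpath, removing $C$ from the count of $\mu$, while the innermost choice of $C$ forbids new intersecting bichromatic cycles from appearing in the old interior of $C$. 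A line-count parity argument on $H_2^{f^\dagger}(\ell^{24})$ and $H_4^{f^\dagger}(\ell^{24})$ then forces $\mu(f^{\dagger\dagger}) < \mu(f^\dagger)$, contradicting minimality and showing $\mu(f^\dagger) = 0$ in fact.

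The main obstacle will be the delicate sub-case $f^\dagger(C) = \{2,4\}$, in which $\sigma$ fixes every $2$- and $4$-vertex and hence preserves $\ell^{24}$ verbatim, so $C$ remains counted by $\mu$ and the reduction stalls. Here I would bypass the $\sigma$-reduction and exploit the degree conditions directly: since $d(v_2)=4$, one has $N(v_2)=\{v_1,x_1,y_1,v_3\}$ with $f(x_1)=3,\ f(y_1)=4$, which forces $\ell^{24}$ and $\ell^{23}$ to leave $v_2$ along the unique neighbors $y_1$ and $x_1$ respectively; one can then carry out the pseudo-$44$-edge analysis of Theorem \ref{thm4-7} adapted to the diagonal configuration of $v_2$ and $v_4$, treating $C$ as a controlled obstruction whose interior is handled by intermediate $\sigma$-operations (using $d(v_4)\in\{5,6\}$ to bound the local type at $v_4$). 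Once this case and the bookkeeping of nested UB-cycles/shells are settled, the existence of $f^* \in C_4^0(G^{C_4})$ with $f^*(v_2) \neq f^*(v_4)$ follows, completing the proof.
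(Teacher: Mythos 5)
Your plan of minimizing $\mu(f^\dagger)$ over the Kempe class and reducing to Theorem~\ref{thm3-7} (or Theorem~\ref{thm4-3}) when $\mu=0$ is a structurally different route from the paper's, which instead immediately conducts a $K$-change for the 13-component containing $v_3$ and then defers to the vertical-axis/sloped-axis machinery of Theorem~\ref{thm4-7}. The gap in your argument is the claim that $\sigma(f^\dagger,C)$ forces $\mu$ to drop. Take $C$ a 12-cycle: $\sigma$ swaps colors 3 and 4 inside $C$, so the portion of the old $\ell^{24}$ inside $C$ becomes a 23-subpath and the old $\ell^{24}$ is no longer a 24-path of $f^{\dagger\dagger}$. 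Since $f^{\dagger\dagger}$ remains a module-coloring (by Theorem~\ref{thm3.4}, every coloring in $F^f(G^{C_4})$ is a shared-endpoint-coloring on $\{v_2,v_4\}$), a new 24-module-path exists, but it may wind through regions the old one avoided, and nothing forces it to meet fewer bichromatic cycles. Your ``line-count parity argument on $H_2^{f^\dagger}(\ell^{24})$'' counts edges of the related-graph built from the \emph{old} path, a fixed object; it cannot control which cycles meet the \emph{new} 24-module-path, so $\mu(f^{\dagger\dagger})<\mu(f^\dagger)$ is not established and the minimality contradiction does not close.

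Secondarily: the subcase you single out as the main obstacle, $f^\dagger(C)=\{2,4\}$, cannot arise. The 24-module-path and any 24-cycle are distinct components of $(G^{C_4})_{24}^{f^\dagger}$, hence vertex-disjoint; since $v_2,v_4$ lie on the outer boundary of $G^{C_4}$, planarity forbids any vertex of the 24-module-path from lying in the interior of a 24-cycle. (A 13-cycle has no 2- or 4-colored vertex, so $\{1,3\}$ is vacuous too; and I do not see how Theorem~\ref{thm2.1} excludes $\{1,2\}$ as you assert, since it constrains only cycles whose bichromatic colors differ from those of $C_4$.) The cases that do arise --- $C$ a 12-, 14-, 23-, or 34-cycle --- are exactly where your $\mu$-reduction is unproved, and the fallback of ``adapting the pseudo-44-edge analysis of Theorem~\ref{thm4-7} to the diagonal configuration'' leaves unsaid how that analysis transfers when the degree hypothesis is at $v_4$ (diagonal to $v_2$) rather than at $v_1$ (adjacent to $v_2$), which is the one substantive change from Theorem~\ref{thm4-7}.
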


\begin{proof}
Without loss of generality, we assume that $\ell^{24}$ intersects with a bichromatic cycle or a cyclic-cycle. First, under $f$, conduct a $K$-change for the 13-component containing $v_3$ and denote by $f_1$ the resulting 4-coloring.  Consider $f_1$. If by $f_1$ we can deduce that $G^{C_4}$ is a Kempe 4-base-module, then the result holds; otherwise, with an analogous proof as that in Theorem  \ref{thm4-7}, we can obtain a 4-coloring $f_2$ based on $f_1$. Now, if $f_2$ contains a bichromatic cycle whose pseudo edge can be eliminated, then the result holds; otherwise, we can obtain a 4-coloring $f_{14}$ based on $f_2$, and then obtain a decycle coloring  with a similar proof as that in Theorem \ref{thm4-7}.
\qed
\end{proof}

\begin{figure}[H]
  \centering
  \includegraphics[width=8cm]{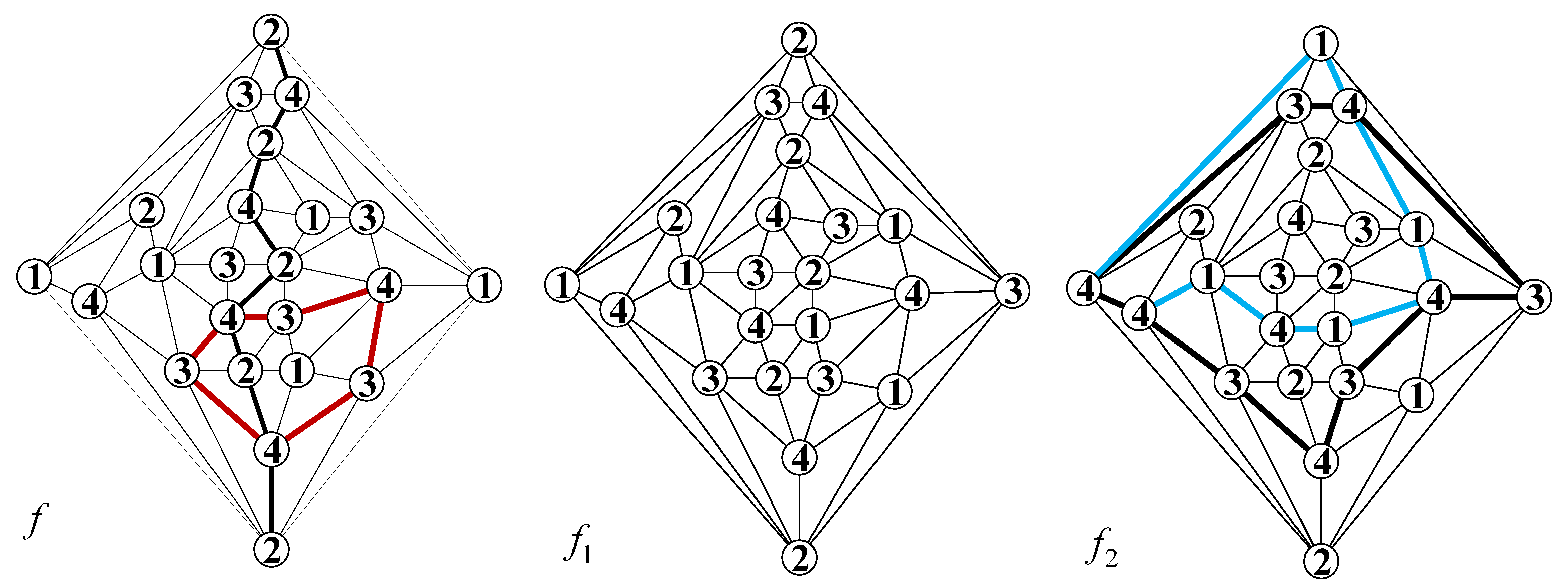}\\
  \caption {An example of the decycle  process  for MMP 4-base-modules based on a module-coloring $f$}\label{fignew51}
\end{figure}

Figure \ref{fignew51} gives an example to illustrate the decycle process for MMP 4-base-modules based on a module-coloring $f$, with a similar way as that for tree-type 4-base-modules, in which we can see how to obtain $f_2$ from $f$. Since the decycle process under $f_2$ is analogous to that for tree-type 4-base-modules, we omit the detailed steps.

\section{55- and 56-configurations are reducible}

\begin{theorem}\label{main}
55- and 56-configurations in a maximal planar graph  with $\delta=5$ are reducible.
\end{theorem}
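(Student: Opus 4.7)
The plan is to argue by contradiction using the contracting 5-wheel operation (C5WO) from Section \ref{sec:ecsystem}, exploiting the CE-system's connection between local graph surgery and $4$-coloring. Suppose $G$ is a minimum counterexample to the FCC with $\delta(G)=5$ that contains a 55- or 56-configuration centered at a 5-wheel $W_5 = v_0$-$v_1v_2v_3v'_2v_4v_1$. Apply C5WO to $W_5$: delete $v_0$ and identify the contracted pair $v_2,v'_2$ into a single vertex, producing an MPG $G'$ on $|V(G)|-2$ vertices. By the minimality of $G$, the graph $G'$ has a proper $4$-coloring, and the task is to produce from some such coloring an extension to all of $G$, contradicting the assumption that $G$ is not $4$-colorable.

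The crucial reformulation packages the exterior of $W_5$ as a 4-base-module. After the C5WO, the portion of $G'$ lying outside the small funnel created at the contraction site is an SMPG $H^{C_4}$ whose outer cycle is the $4$-cycle $C_4 = v_1v_2v_3v_4v_1$ obtained from the 5-wheel cycle by identifying $v_2$ and $v'_2$. The funnel itself still carries the edge $v_2v_4$ (inherited from the former cycle edge $v'_2v_4$ of $W_5$), so every proper $4$-coloring of $G'$ must satisfy $c(v_2)\neq c(v_4)$, which is incompatible with a module-coloring of $H^{C_4}$ (where $c(v_2)=c(v_4)$ by definition). Under the counterexample hypothesis for $G$, therefore, $H^{C_4}$ is forced to be a 4-base-module (with $C_4$ an unchanged bichromatic cycle), and producing a $4$-coloring of $G'$ becomes exactly the problem of producing a \emph{decycle} coloring of $H^{C_4}$. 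The 55- and 56-configurations translate directly into the structural hypotheses $d_{H^{C_4}}(v_2)=4$ and $d_{H^{C_4}}(v_1)\in\{5,6\}$ that drive the decycle machinery of Section \ref{sec5}.

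A decycle coloring of $H^{C_4}$ is then supplied by the results proved earlier: Theorem \ref{thm4.1} and Corollary \ref{cor4.2} for recursive modules; Theorem \ref{thm-mcycle} when a module-path passes through a degree-$4$ vertex; Theorem \ref{thm4-3} when the two module-paths are parallel; the extensive case analysis in Theorem \ref{thm4-7}, whose Claims 1--7 exhaust the 55-324, 55-343, 56-3234, 56-3243, 56-3423, 56-3424, and 56-3434 subcases of intersecting module-paths that mirror the degree patterns of the 55- and 56-configurations; Theorem \ref{thmf4-4} for module-cycle modules; and Theorems \ref{thm3-7}, \ref{thm3-8} for cycle-type and cyclic-cycle-type modules. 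Combining a decycle coloring of $H^{C_4}$ with a coloring of the funnel vertices yields a $4$-coloring $f'$ of $G'$; reinserting $v_0$ then requires only a color distinct from those on its five neighbors $\{v_1,v_2,v_3,v'_2,v_4\}$, and since $c(v_2)=c(v'_2)$ by contraction, this succeeds provided at most three colors appear on $\{v_1,v_2,v_3,v_4\}$.

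The main technical obstacle is that the bare decycle property $c(v_2)\neq c(v_4)$ does not by itself guarantee the ``at most three colors on $C_4$'' condition: a priori $c(v_1),c(v_2),c(v_3),c(v_4)$ could be pairwise distinct, exhausting the palette and leaving no color available for $v_0$. The remedy is to follow each decycle coloring produced in Claims 1--7 by a targeted $K$-change on the $13$-component containing exactly one of $v_1,v_3$, so as to force $c(v_1)=c(v_3)$ after the exchange; the explicit constructions of those claims (the 23-vertical-axis and 14-sloped-axis arguments, together with the analyses of good-cycles, 24-big-cycles, and quasi 13-horizontal-axes) are arranged precisely so that such a $K$-change is always available in every subtype of 4-base-module corresponding to a 55- or 56-configuration. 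With $c(v_1)=c(v_3)$ in hand, the five neighbors of $v_0$ use at most three colors, and assigning $v_0$ the remaining fourth color yields a proper $4$-coloring of $G$, contradicting the counterexample assumption and establishing that both 55- and 56-configurations are reducible.
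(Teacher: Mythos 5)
Your proposal takes a genuinely different route from the paper --- a C5WO contraction at the 5-wheel followed by minimality, rather than the paper's E4WO expansion followed by a C4WO --- and this different route introduces gaps that are not repaired.

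The central problem is that the C5WO setup does \emph{not} put the decycle machinery on the critical path. After contracting the 5-wheel to a funnel, the merged vertex $v_2$ is adjacent to $v_4$ inside the funnel, so every proper 4-coloring of $G'$ already has $c(v_2)\neq c(v_4)$; every restriction to $H^{C_4}$ is automatically a decycle coloring, and the existence of one is free from the induction hypothesis. The claim that $H^{C_4}$ is ``forced to be a 4-base-module'' because of the counterexample hypothesis does not follow: being a 4-base-module is a structural property of $H^{C_4}$ (Theorem \ref{thm3.4}), not something implied by properties of $4$-colorings of the larger graph $G'$, and in fact nothing in your argument establishes it. What you actually need to reinsert the wheel center after E5WO is that the four funnel vertices carry at most three colors, and since $v_2$, $v_4$ and $v_1$ are pairwise adjacent there, this forces $c(v_1)=c(v_3)$. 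That condition is \emph{not} the decycle property, and none of Theorems \ref{thm4.1}, \ref{thm-mcycle}, \ref{thm4-3}, \ref{thm4-7}, \ref{thm3-7}, \ref{thm3-8} says anything about producing a coloring with $c(v_1)=c(v_3)$ on the diagonal not occupied by $v_2,v_4$. You acknowledge this as the ``main technical obstacle'' and propose a targeted $K$-change to force $c(v_1)=c(v_3)$, but no such step is proved available, and Claims 1--7 of Theorem \ref{thm4-7} are \emph{not} arranged to supply it; they only engineer $f^*(v_2)\neq f^*(v_4)$.

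The paper's choice of operation is what dissolves this obstacle. By performing an E4WO on the bichromatic $2$-path $v_3v_4v_1$ of the $5$-wheel, the paper places the two new vertices $x,v'_4$ inside the $4$-cycle $C_4=v_1v_2v_3v_4v_1$, making $C_4$ a chordless face so that the exterior $G^{*C_4}$ is an SMPG. In $G^{*C_4}$ the $5$-wheel center $v_2$ has degree $4$ because the chord $v_2v_4$ has been displaced into $B^4$; this is exactly the hypothesis $d(v_2)=4$ of the decycle theorems, and a decycle coloring $f^*$ with $f^*(v_2)\neq f^*(v_4)$ is precisely the extra constraint needed so that $v_2$'s color remains legal when the missing fifth neighbor $v_4$ is restored by the C4WO. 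The ``at most three colors on the $5$-cycle'' problem never arises, because $v_2$ --- the vertex whose coloring is at stake --- is the wheel center and is on $C_4$ with only four neighbors in $G^{*C_4}$. Your C5WO variant deletes the wheel center entirely and moves the bottleneck to the $5$-cycle, where the decycle machinery gives no purchase. Also omitted from your proposal are the paper's preliminary induction on $G-v_2$ and the $K$-change equivalence argument (establishing that some bichromatic $2$-path exists on $C_5$), which set up the choice of $2$-path on which the E4WO is performed; and the geometric claim that identification of $v_2,v'_2$ yields a genuine $4$-cycle $C_4$ boundary for $H^{C_4}$ is not established (the contracted cycle walk visits $v_2$ twice unless $v_3v_4$ happens to be an edge of $G$). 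So the approach, while superficially dual to the paper's, fails at the step that the decycle machinery was designed to handle, and the proposed patch is unsupported.
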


\begin{proof}
By induction on the number of vertices. Let $G$ be an MPG containing $n$ vertices. This is clearly true for $n=12$. Suppose that the conclusion is true whenever $n\leq k$ and consider the case $n=k+1$, where $k\geq 12$. In the following, we use $\{1,2,3,4\}$ to denote the color set.

Let $v_2$ be an arbitrary 5-vertex of $G$. Then, $G[N_G[v_2]]$ is 5-wheel $W_5$. We denote by  $C_5=uv_3v_4v_1v_5u$ the cycle of $W_5$, i.e., $W_5$=$v_2$-$uv_3v_4v_1v_5u$; see Figure \ref{figure5-1} (a), in which we use the 5-wheel $W_5$ to simply represent the graph $G$.  By the induction hypothesis, $G-v_2$ is 4-colorable. If there exists a 4-coloring $g \in C_4^0(G-v_2)$  such that $|g(C_5)|=3$, then $g$ can be extended to a 4-coloring of $G$ by coloring $v_2$ with the color $\{1,2,3,4\}\setminus g(C_5)$. So, we may assume that  $C_5$ is colored with four distinct colors under every 4-coloring $f$ of $G-v_2$.  Without loss of generality, we assume that $f(u)=1, f(v_3)=f(v_5)=2$,  $f(v_4)=3$, and $f(v_1)=4$; see Figure \ref{figure5-1} (a).

If $u$ and $v_1$  are not in the same 14-component of $f$, then we carry out  a $K$-change for the 14-component of $f$ containing vertex $u$ (or $v_1$) and obtain a new 4-coloring of $G-v_2$, say $f'$. Clearly, $|f'(C_5)|=3$, a contradiction to the above assumption.
So, we may assume that  $f$ contains an 14-path of $f$ from $u$ to $v_1$. By the same reason, there is also an 13-path of $f$ from $u$ to $v_4$; see Figure \ref{figure5-1} (b).
 Clearly, $v_5$ and $v_3$ do not belong to the same $23$-component of $f$. Then, based on $f$, we conduct
a $K$-change for the 23-component of $f$ containing vertex $v_5$ and obtain a new 4-coloring of $G-v_2$, denoted by $f_1$. We have that $f_1(v_5)=f_1(v_4)=3, f_1(v_1)=4, f_1(v_3)=2$, and $f_1(u)=1$; see Figure \ref{figure5-1} (c). By a reverse derivation, we can also obtain $f$ from $f_1$. We use $\sim$ to denote the relation between $f$ and $f_1$, i.e., $f\sim f_1$. With an analogous discussion as $f$, we see that under $f_1$ there exists a 24-path $P_{24}^{v_3v_1}$ from $v_3$ to $v_1$ and so $v_4$ and $u$ do not belong to the same 13-component of $f_1$; see Figure \ref{figure5-1} (d). We then conduct a $K$-change for the 13-component of $f_1$ in the interior of $P_{24}^{v_3v_1}\cup v_1v_2v_3$ and obtain a new 4-coloring of $G-v_2$, denoted by $f_2$; see Figure \ref{figure5-1} (e). Under $f_2$, $f_2$ contains a 23-path  $P_{23}^{v_3v_5}$  from $v_3$ to $v_5$, and $u$ and $v_1$ do not belong to the same 14-component of $f_2$; see Figure \ref{figure5-1} (f). By conducting a $K$-change for the 14-component of $f_2$ in the interior of $P_{23}^{v_3v_5}\cup v_5v_2v_3$, we obtain a new 4-coloring $f_3$ of $G-v_2$; see Figure \ref{figure5-1} (g).
Finally, we can also see that $f_3$ contains  a 13-path $P_{13}^{v_4v_5}$ from $v_4$ to $v_5$, and hence $v_1$ and $v_3$ do not belong to the same 24-component of $f_3$; see Figure \ref{figure5-1} (h). We then conduct a $K$-change on the 24-component of $f_3$ in the interior of $P_{13}^{v_4v_5}\cup v_5v_2v_4$, and obtain a new 4-coloring $f_4$ of $G-v_2$; see Figure \ref{figure5-1} (i).

 \begin{figure}[H]
  \centering
  \includegraphics[width=11.5cm]{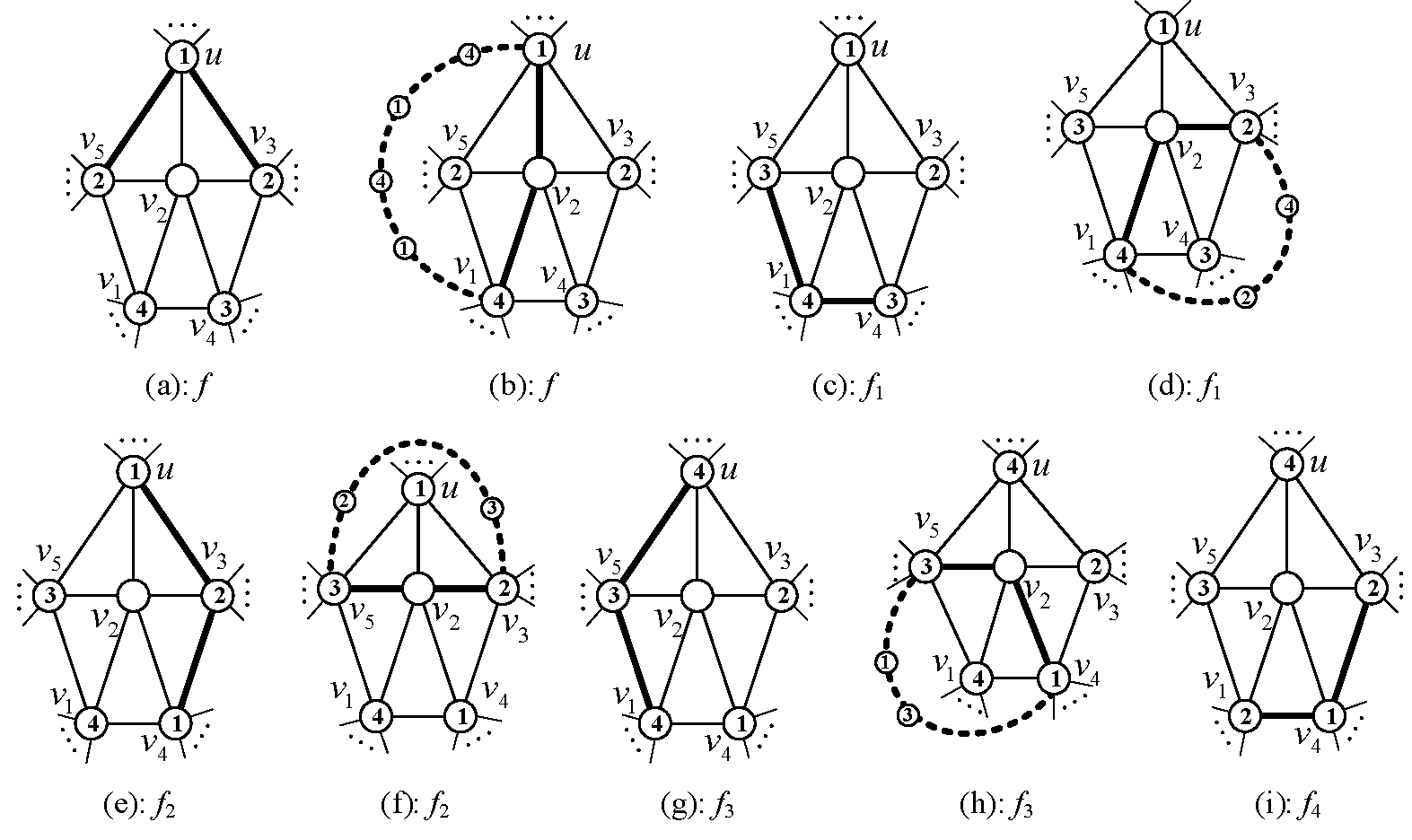}\\
  \caption {The equivalence of the five 4-colorings of $G$ in the condition that $C_5$ are colored with four colors}\label{figure5-1}
\end{figure}

Let a 2-path denote a path of length 2. Observe that every 4-coloring in $\{f,f_1,f_2,f_3,f_4\}$ has a bichromatic 2-path  whose edges are on $C_5$, and any two distinct 4-colorings in $\{f,f_1,f_2,f_3,f_4\}$ have distinct such bichromatic 2-paths. So, under all 4-colorings of $C_5$, there are in total five distinct bichromatic 2-paths on $C_5$. This implies that when we only consider the 4-coloring restricted to $C_5$, $\sim$ is an equivalence relation, and $f\sim f_1 \sim f_2 \sim f_3 \sim f_4 \sim f$.

In 1904, Wernicke \cite{r24} proved that  every MPG of minimum degree 5 contains a 55-configuration (consisting of two adjacent vertices of degree 5) or a 56-configuration  (consisting of a vertex of degree 5 adjacent to a vertex of degree 6); see Figure \ref{fig5-2} (a) and (b). 
So, $G$ contains a 55-configuration $G_{5,5}$ or a 56-configuration $G_{5,6}$. Without loss of generality, we always assume that the 5-wheel contained in $G_{5,5}$ and $G_{5,6}$ is $W_5$, the 5-wheel shown in Figure \ref{figure5-1} (a).  By the equivalency of 4-colorings in $\{f,f_1,f_2,f_3,f_4\}$, we further assume that  under $f$ the bichromatic 2-path of $C_5$ is $v_3v_4v_1$, where $f(v_3)=f(v_1)=1, f(v_4)=2, f(v_5)=3$, and $f(u)=4$; see  Figures \ref{fig5-2} (a) and (b) ({Note that the gray vertices in Figure  \ref{fig5-2}  are colored with some unfixed colors under $f$}).

Now, we will prove that $f$ can be extended to a 4-coloring of $G$. We first consider the case that $G$ contains $G_{5,5}$. 

Suppose that $G$ contains $G_{5,5}$. We conduct an E4WO on the bichromatic 2-path $v_3v_4v_1$, and the resulting graph is shown in Figure \ref{fig5-2} (c), denoted by $G^*$, where $v'_4$ and $v_4$ are the two vertices replacing $v_4$ and $x$ is  the  center of the new 4-wheel $x-v_3v_4v_1v'_4v_3$. Clearly, $G^*$ is an MPG of order $k+3$. Based on $f$, there is a 4-coloring $f'$ of  $G^*-v_2$, called \emph{the natural coloring of $f$}, such that $f'(v)=f(v)$ for every $v\in V(G^*)\setminus \{v'_4,v_4,x,v_2\}$, $f'(v_4)=f'(v'_4)=2$, and  $f'(x)=3$ or $4$, where $v_2$ is the unique vertex not assigned to a color under $f'$ (see Figure \ref{fig5-2} (c)). Furthermore, we can obtain a 4-coloring $f''$ of $G^*$ based on $f'$ by recoloring $v'_4$ with 3,  coloring $v_2$ with 2, and  coloring $x$ with 4; see Figure \ref{fig5-2} (d). Observe that $f''(v'_4)\neq f''(v_4)$. In the following, we will prove that there exists a 4-coloring $f'''$ of $G^*$ derived from $f''$ such that $f'''(v'_4)=f'''(v_4)$. Thus, under $f'''$, when conducting  a C4WO on the 4-wheel $x-v_3v_4v_1v'_4v_3$, we can obtain a 4-coloring of $G$.

\begin{figure}[H]
  \centering
  \includegraphics[width=12cm]{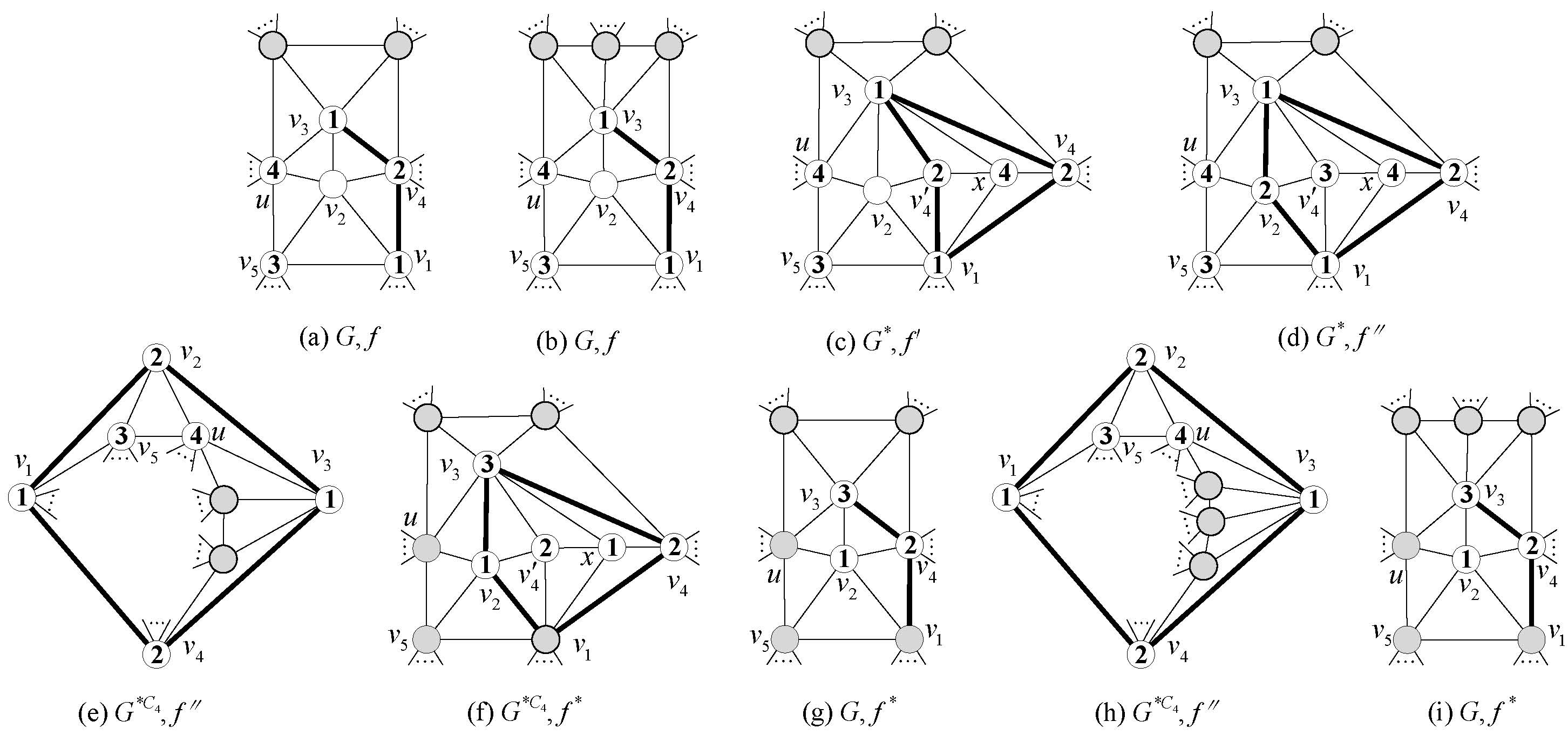}\\
  \caption {An illustration diagram of the proof of Theorem \ref{main}}\label{fig5-2}
\end{figure}

Observe that the  SMPG that is composed by the 4-cycle $C_4=v_1v_2v_3v_4v_1$ and its interior is the smallest 4-base-module $B^4$, and $G^{*C_4}=G^*-\{x,v'_4\}$ is a 4-chromatic SMPG.
By topological transformation, $G^{*C_4}$ has a plane embedding such that $C_4$ is the outer face (unbounded face); see Figure \ref{fig5-2} (e). It is clear that $d_{G^{*C_4}}(v_2)=4$ and $d_{G^{*C_4}}(v_3)=5$.

 Now, if $G^{*C_4}$ is not a 4-base-module, then by Theorem \ref{thm3.4}, there exists a $f^*\in C_4^0(G^{*C_4})$ such that $f^*(v_2)\neq f^*(v_4)$. If $G^{*C_4}$ is  a 4-base-module, then  by Theorems \ref{thm4-3}, \ref{thm4-7},\ref{thm3-7},\ref{thm3-8}, there also exists a $f^*\in C_4^0(G^{*C_4})$ such that $f^*(v_2)\neq f^*(v_4)$. So, without loss of generality, we may assume that $f^*(v_2)=1, f^*(v_4)=2$, and  $f^*(v_3)=3$, whether $G^{*C_4}$ is a 4-base-module or not. Then, $f^*$ can be extended to the desired 4-coloring $f'''$ of $G^*$ by letting $f'''(v)=f^*(v)$ for every $v\in V(G^*)\setminus \{v'_4,x\}$, $f'''(v'_4)=2$ (this can be done since $f^*(v_1)\neq 2$), and $f'''(x)=1$. Thus, the restricted coloring of  $f'''$ to the resulting graph (i.e., $G$) by  conducting a C4WO on the 4-wheel $x-v_3v_4v_1v'_4v_3$ is a 4-coloring of $G$; see Figure \ref{fig5-2} (g).

In the above, we prove that $55$-configuration is reducible. The proof for $G_{5,6}$ is similar to that for $G_{5,5}$. The difference between them is that $d_{G^{*{C_4}}}(v_3)=6$ (see Figure \ref{fig5-2} (h)), where $G^{*{C_4}}=G^{*}-\{v'_4, x\}$ and $G^*$ is the resulting graph obtained from $G$ by conducing an E4WO on the bichromatic 2-path $v_3v_4v_1$.

If $G^{*{C_4}}$ is a 4-base-module, then by Theorems \ref{thm4-3}, \ref{thm4-7},\ref{thm3-7},\ref{thm3-8}, there exists a 4-coloring $f^{*}\in C_{4}^{0}(G^{*{C_4}})$ such that $f^{*}(v_2)\neq f^{*}(v_4)$. 
Furthermore, $f^{*}$ can be extended to a 4-coloring  of $G^{*}$  for which $v'_4$ and $v_4$ are colored with the same color. Again, by conducting a C4WO on the new 4-wheel, we  can obtain a 4-coloring of $G$;  see Figure \ref{fig5-2} (i). This completes the proof. \qed

\end{proof}

\begin{figure}[H]
  \centering
  \includegraphics[width=12.3cm]{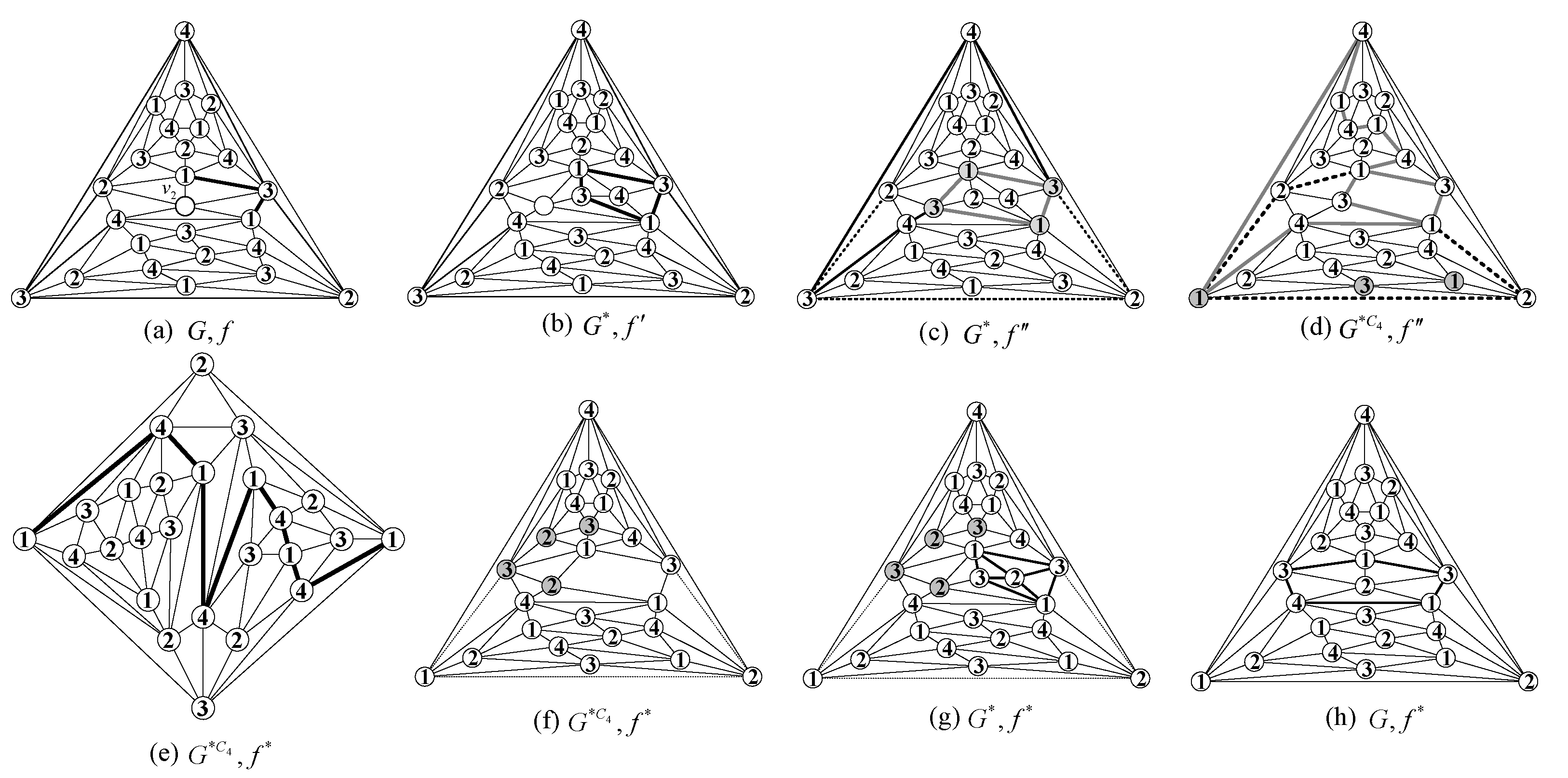}\\
  \caption {The process of coloring the Heawood counterexample by Theorem \ref{main}}\label{figure5-3}
\end{figure}

Here, we take the well-known Heawood counterexample as an example to show the process of our proof in Theorem \ref{main}; see Figure \ref{figure5-3}, where $v_2$ is the unique vertex of degree 5 whose color is unfixed. The two vertices with color 1 in the neighborhood of $v_2$ have degree 6; see Figure \ref{figure5-3}(a). The vertices of  the bichromatic 2-path adjacent to $v_2$ (marked with bold solid lines) are colored with 1,3,1, respectively. By conducting an E4WO on this bichromatic 2-path we obtain a graph $G^*$ with a natural coloring $f'$; see Figure \ref{figure5-3}(b). Based on $f'$, we color $v_2$ with 3 and change the color of the vertex of degree 4 adjacent to $v_2$ from 3 to 2, and obtain a 4-coloring of $G^*$, denoted by $f''$ (see Figure \ref{figure5-3} (c)). Now, delete the two vertices in the interior of the $B^4$ subgraph of $G^*$ and obtain a 4-chromatic SMPG $G^{*C_4}$. We also denote by $f''$ the restricted coloring of $f''$ to $G^{*C_4}$. Observe that there is a 24-cycle of $f''$;  we conduct (in $G^{*C_4}$) a $K$-change for the   13-component of $f''$ in the interior of the 24-cycle. The resulting coloring is denoted by $f'''$ (see  Figure \ref{figure5-3} (d)). Since under $f'''$ there is an 14-endpoint-path and an 12-endpoint-path,  and under $f''$ there is a 34-endpoint-path and a 23-endpoint-path (see  Figure \ref{figure5-3} (c)), it follows that the obtained SMPG is not a 4-base-module.

So, under $f'''$, by conducting a $K$-change for the 23-component containing one of the endpoints, we can obtain a 4-coloring of $G^{*C_4}$; see Figure \ref{figure5-3} (e)$\sim$ (f). Now, add two vertices in the interior of $C_4$ and the resulting graph is $G^*$. We recolor the two vertices in the interior of $B^*$ and obtain a coloring, also denoted by $f^*$; see Figure \ref{figure5-3} (g). Under $f^*$, conduct a C4WO on the 4-wheel (marked by bold solid lines) and obtain the MPG $G$. The restricted coloring of $f^*$ to $G$ is a 4-coloring of $G$; see Figure \ref{figure5-3} (h).

\end{document}